\newcommand{\blue}[1]{\textcolor{blue}{#1}}
\DeclareMathAlphabet{\mathpzc}{OT1}{pzc}{m}{it}
\renewcommand{\d}{\;{\rm d}}
\newcommand{\scal}[1]{\left\langle #1 \right\rangle}
\newcommand{\sett}[1]{\left\{   #1   \right\}}
\newcommand{\norm}[1]{\left\|   #1   \right\|}
\newcommand{\abso}[1]{\left|   #1   \right|}
\newcommand{\paar}[1]{\left(   #1   \right)}
\newcommand{\Bigabs}[1]{\Bigl\vert #1 \Bigr\vert}
\newcommand{\vertiii}[1]{{\left\vert\kern-0.25ex\left\vert\kern-0.25ex\left\vert #1 
		\right\vert\kern-0.25ex\right\vert\kern-0.25ex\right\vert}}
\renewcommand{\theequation}{\arabic{section}.\arabic{equation}}
\numberwithin{equation}{section}
\newtheorem{theorem}{\quad Theorem}[section]
\newtheorem{lemma}[theorem]{\quad Lemma}
\newtheorem{corollary}[theorem]{\quad Corollary}
\newtheorem{remark}[theorem]{\quad Remark}
\newcommand{\ngg}{\mathscr{G}}
\newcommand{\R}{\mathbb{R}}
\DeclareMathOperator{\supp}{supp}
\newcommand{\Z}{\mathbb{Z}}
\newcommand{\angles}[1]{\langle #1 \rangle}
\newcommand{\dd}{{\rm d}}
\newcommand{\dx}{\;{\rm d}x}
\newcommand{\ff}{\varphi}
\newcommand{\ee}{{\rm e}}
\newcommand{\ii}{{\rm i}}
\newcommand{\la}{\left\langle}
\newcommand{\ra}{\right\rangle}
\newcommand{\N}{\mathbb{N}}
\newcommand{\da}{\Delta}
\newcommand{\e}{\varepsilon}
\newcommand{\lam}{\lambda}
\newcommand{\x}{{X_\beta}}
\newcommand{\xx}{{\dot{X}_\beta}}
\newcommand{\lt}{{L^2(\mathbb{R}^n)}}
\newcommand{\vr}{{\varrho}}
\newcommand{\al}{\alpha}
\newcommand{\rr}{\mathbb{R}}
\newcommand{\rn}{{\mathbb{R}^n}}
\newcommand{\E}{\mathbb{E}}
\newcommand{\F}{\mathbb{F}}
\author{  Amin Esfahani\footnote{Department of Mathematics, Nazarbayev University, Astana 010000, Kazakhstan\newline   E-mail: amin.esfahani@nu.edu.kz, achenef.tesfahun@nu.edu.kz}~ and Achenef Tesfahun    }
\title{  The Cauchy problem for the   nonlinear Schrödinger equation   with a convolution potential 
	\footnotetext{2020 Mathematical subject classification:  35B30, 35Q55, 35A01, 35B40}
	\footnotetext{Keywords:  Local well-posedness, Dispersive estimates, Standing waves, Uniform boundedness}}
\date{}
\begin{document}
	\maketitle
	\begin{abstract}
		This paper investigates the nonlinear Schr\"{o}dinger equation with a singular convolution potential. It demonstrates the local well-posedness of this equation in a modified Sobolev space linked to the energy. Additionally, we derive conditions under which the solutions are uniformly bounded in the energy space. This finding is closely linked to the existence of standing waves for this equation.
	\end{abstract}

	
	
	
	
	\section{Introduction}
	
	In this paper, we consider the following Scr\"{o}dinger  equation
	\begin{equation}\label{rnls}
		\begin{cases}
			\ii u_t+\Delta u-\e L_V u +\varsigma  |u|^{2}u=0,\qquad \varsigma=\pm1 \\
			u(x,0)=u_0(x),\qquad x\in\rn,
		\end{cases}
	\end{equation}
	where, $\e\in\rr$,   $L_V u= V\ast u$ and $V$ is a potential. The choice of a convolution potential instead of the classical multiplicative potential is due to the fact that the term $L_V$ is diagonal in the Fourier basis (see \cite{bamgre,Faou-grebet, giul}).

	This equation appears in condensed matter physics and stochastic mechanics \cite{mendes}.
	Bourgain in \cite{bou-1}  proved
	existence of quasi-periodic solutions of \eqref{rnls} on the torus $\mathbb{T}^2$. He also studied a general class of the Schr\"{o}dinger equation in any dimension in \cite{bou-2}.
	The local well-posedness of a general class of \eqref{rnls} on the circle was studied in \cite{fei}.
	It was shown in \cite{guardia} that speed of growth of $H^s(\mathbb{T}^2)$-norms (with $s>1$) of the solutions of \eqref{rnls} with   the potential $V\in H^{s_0}(\mathbb{T}^2)$, with $s_0>0$, is the same as the one obtained for \eqref{rnls} without the potential. See also \cite{cgp}. 
	
	When $\e=0$, \eqref{rnls} turns into the classical nonlinear Schr\"{o}dinger (NLS) equation
	
	\begin{equation}\label{nls}
		\begin{cases}
			\ii u_t+\Delta u  +\varsigma  |u|^2u=0, \\
			u(x,0)=u_0(x)
		\end{cases}
	\end{equation}
	The plus and minus signs in front
	of the nonlinearity in both \eqref{rnls} and \eqref{nls} correspond to the focusing and defocusing cases, respectively. Contrary to \eqref{rnls}, equation \eqref{nls} enjoys the scaling property, that is: if $u$ is a solution of \eqref{nls} and $\lambda > 0$, then 
	$u_\lam(x,t)=\lam u(\lam x,\lambda^2 t)$ is also a solution of \eqref{nls}. So, the critical Sobolev exponent \eqref{nls} is  
	$s_c=\frac n2-1$ such that $\|u_\lam(0)\|_{H^{s_c}}=\|u(0)\|_{H^{s_c}}$
	for all $\lam>0$. Grunrock investigated the local well-posedness of \eqref{nls} in \cite{gro-1} using alternative function spaces below $L^2(\mathbb{R})$, while ensuring the local Lipschitz dependence on the initial data. In the focusing case, \eqref{nls} admits soliton and multisoliton solutions and is globally well-posed in $L^2$ due to the conservation of the $L^2$-norm.  Christ, Colliander, and Tao \cite{cct} demonstrated that the data-to-solution map is unbounded from $H^s(\mathbb{R})$ to $H^s(\mathbb{R})$ for $s<-1/2$. Recently, Harrop-Griffiths et al. in \cite{hrm} proved that \eqref{nls} is globally well-posed for all initial data in $H^s(\mathbb{R})$ with $s>-1/2$, wherein the solution map extends uniquely from Schwartz space to a jointly continuous map $\mathbb{R} \times H^s(\mathbb{R})\to H^s(\mathbb{R})$.  When $n=2,3, 4$, Strichartz estimates allow to prove local wellposedness in $H^s(\rn)$ for $s \geq s_c$, where the time of existence depends on the profile of the data as well as the norm (see \cite{cazenave-weissler}).
	
	Here, we consider the singular potential $V(x)=|x|^{2\beta-n}$ with $0<2\beta<n$, leading to $L_V$ transforming into $L_\beta=D^{-2\beta}$. It is noteworthy that \eqref{rnls} can be viewed as a perturbed version of \eqref{nls}, especially when the NLS equation is influenced by  a repulsive inverse-power potential, as depicted in:
	\begin{equation}\label{pnls}
		\begin{cases}
			\ii u_t+\Delta u  +\varsigma  |u|^2u=\varepsilon V(x)  u,\qquad  V(x)=|x|^{2\beta-n},\,\,2\beta>n-\min\{2,n\},\\
			u(x,0)=u_0(x).
		\end{cases}
	\end{equation}
	We refer the readers, for example, to works \cite{dinh, kmvzz} and references therein for recent progress on the well-posedness of \eqref{pnls} with different values of $\beta$.
	
	Our objective in this paper is to investigate the impact of the convolution term on the well-posedness of   \eqref{rnls}. To analyze   \eqref{rnls}, we aim to derive dispersive estimates that will allow us to determine a lower Sobolev index for well-posedness. Notably,   \eqref{rnls} conserves formally the functionals $\mathbb{P}$, $\F$, and $\E$, defined as follows:
	\[
	\mathbb{P}(u(t))=\Im  \scal{u(t),\nabla u(t)}_{L^2(\rn)}  ,
	\]
	\[
	\F(u(t))=\|u(t)\|_{L^2(\rn)}^2 ,
	\]
	\[
	\E(u(t))=\frac12\|\nabla u(t)\|_{L^2(\rn)}^2 
	+\frac{\e}{2}\la u,L_\beta u\ra_{L^2(\rn)}
	-
	\frac{\varsigma}{4}\|u(t)\|_{L^4(\rn)}^4  .
	\]
	Considering that the Fourier frequencies of $u$ are convolved with the potential, it is natural to explore the well-posedness in a modified Sobolev space where this convolution is controlled. In terms of the energy functional $\E$, the suitable energy space associated with   \eqref{rnls} is denoted by $\x=H^1\cap \dot{H}^{-\beta}$, and defined via 
	\[
	\|u\|_\x^2=\|u\|_{H^1(\rn)}^2+ \|L_{\beta/2}u\|_\lt^2
	=\norm{u}_{H^1}^2+\norm{D^{-\beta}u}_{L^2}^2.
	\]
	Since $X_\beta$ is a subspace of $H^1$ (see Lemma \ref{density-space}), then it is embedded into $L^{p+2}$ with 
	\[
	p\leq\begin{cases}
		\infty^-,&n\leq2,\\
		\frac{4}{n-2},&n>2.
	\end{cases}
	\]

	In the literature, it is well-established that the analysis of dispersive equations is closely tied to the phase function. One of the primary challenges in studying the Cauchy problem for \eqref{rnls} stems from its dispersion relation, given by $$m(\xi)=|\xi|^2+\e|\xi|^{-2\beta},$$ 
	which exhibits a singularity at the origin.
	In one-dimensional case $n=1$, if $\e>0$, then $| m''(\xi)|\gtrsim1$.  So it follows from the Van der Corput lemma (see Lemma \ref{lm-corput}) for $\e>0$ that
	\begin{equation}\label{osci-int}
		\sup_{x\in\rr}\left|\int_\rr\ee^{\ii(x\xi+ t m(\xi))}\dd\xi\right|\lesssim|t|^{-\frac12}.
	\end{equation}
	Hence, the Strichartz estimates associated with \eqref{rnls} are akin to those of \eqref{nls}, thereby establishing its local well-posedness in $L^2(\rr)$ (see \cite{linaresponce}). In the case $\e<0$, we observe that $m''(\xi)$ vanishes at some $\xi_0$, and $|m''(\xi)|\sim|\xi-\xi_0|$ in a small neighborhood of $\xi_0$. Consequently, the conditions of Theorem 2.1 in \cite{kpv} hold, and by an argument similar to that in the case $\e>0$, combined with the Strichartz estimates of \cite[Theorem 2.1]{kpv}, we establish local well-posedness in $L^2(\rr)$.
	
	In dimensions $n\geq2$, the local and global smoothing estimates of Kenig-Ponce-Vega \cite{kpv} prove challenging due to the inability to approximate the symbol $m$ by a polynomial. To address this issue, we exploit the radial symmetry of the symbol $m$, which allows us to reduce the problem to a one-dimensional oscillatory integral using the Bessel functions. This approach has been explored in prior research, including works by Cho and Lee \cite{cholee} and Guo, Peng, and Wang \cite{gpw}, where they considered general radial (nonhomogeneous) phase functions. However, these studies impose a crucial condition that $\frac{\dd^2m}{\dd r^2}\cdot\frac{\dd m}{\dd r}$ with $r=|\xi|$ should not change sign.
	On the contrary, we observe that the aforementioned condition for the phase function $m$ of \eqref{rnls} is not indeed valid, rendering the techniques of Cho and Lee \cite{cholee} seemingly inapplicable. To overcome this obstacle, we employ frequency localization by separating high and low frequencies, enabling us to establish the decay rate $t^{-\frac n2}$ for the localized $n$-dimensional version of \eqref{osci-int} (see Lemma \ref{lm-LocStr}). Subsequently, we derive suitable decay estimates using dyadic decomposition and the decay properties of the Bessel function. As a result, we note that the dispersive estimates of \eqref{rnls} are analogous to those of \eqref{nls}.

	To formulate our results, we define the modified Sobolev space $H_\beta^s$ via
	\begin{align*}
		\| u \|_{H^{s}_\beta} := \left\|  | \xi |^{-\beta}  \angles{ \xi }^{s+\beta}  \widehat{u} (\xi) \right\|_{L^2_{ \xi}}.
	\end{align*}
	Note that 	$$ \| u \|_{H^{1}_\beta} \sim \| u \|_{X_\beta}. $$
	
	Our first result is as follows.

	\begin{theorem}\label{thmlwp}
		Let  $n\geq 1$, $0< \beta <n/2$ and $\varepsilon\in\rr$. If  $s\geq(n-1)/2$  and $u_0\in H^s_\beta (\R^n)$, then
		the Cauchy problem \eqref{rnls} is locally well-posed.
	\end{theorem}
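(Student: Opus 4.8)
The plan is to solve \eqref{rnls} by a contraction–mapping argument built on Strichartz estimates for the linear propagator, so that essentially all the work is in the linear (dispersive/Strichartz) theory; the cubic nonlinearity will turn out to be benign. Write the linear part of \eqref{rnls} as $\ii u_t+\Delta u-\e L_\beta u=0$ and denote its solution operator by $S(t):=\ee^{-\ii tm(D)}$, where $m(\xi)=|\xi|^2+\e|\xi|^{-2\beta}$ (the harmless constant from the convolution kernel is absorbed). Since $m$ is real–valued, $S(t)$ is a unitary group on every $H^s_\beta(\rn)$, so it preserves the data norm; in particular the $\dot H^{-\beta}$–component of the $H^s_\beta$–norm is transported for free, and the whole difficulty in the linear theory is the space–time dispersive decay.

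\emph{Step 1 (Strichartz estimates).} By Lemma~\ref{lm-LocStr} the kernel of the frequency–localized propagator $P_N S(t)$ obeys the free–Schr\"odinger decay $|t|^{-n/2}$, uniformly in the Littlewood--Paley scale $N$. Feeding this into the $TT^\ast$/Keel--Tao scheme, together with the Christ--Kiselev lemma for the retarded version, yields the localized estimates
\begin{equation*}
	\norm{P_N S(t)f}_{L^q_tL^r_x(\rr\times\rn)}\lesssim \norm{P_Nf}_{L^2}
\end{equation*}
for every Schr\"odinger–admissible pair $(q,r)$ (that is, $\frac2q+\frac nr=\frac n2$, $2\le q\le\infty$, $(q,r,n)\ne(2,\infty,2)$), and the dual and inhomogeneous variants. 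Reassembling the dyadic pieces via Littlewood--Paley theory, dyadic decomposition, and the Bessel–function asymptotics indicated in the introduction, one obtains a global Strichartz estimate in which regularity is measured in $H^s_\beta$; because $m$ is neither homogeneous nor regular at $\xi=0$, this reassembly is lossy, and the loss is precisely the half derivative separating the threshold $s\ge\frac{n-1}2$ from the scaling exponent $s_c=\frac n2-1$. (For $n=1$ this collapses to the classical $L^2$ Strichartz theory used in \cite{linaresponce,kpv}, as recalled above.) One also records a low–frequency Strichartz/energy estimate carrying the weight $|\xi|^{-\beta}$, which is legitimate because $0<2\beta<n$.

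\emph{Step 2 (Nonlinear estimate).} Let $\nn(u):=\varsigma|u|^2u$ and $Y_T:=C([0,T];H^s_\beta)\cap(\text{the relevant Strichartz spaces on }[0,T])$. In the dual Strichartz norm, the high–frequency/$H^s$ part of $\nn(u)$ is controlled by the fractional Leibniz rule and H\"older's inequality in space–time Lebesgue norms exactly as for the cubic NLS at regularity $s\ge\frac{n-1}2\ge\max\{0,s_c\}$ (the nonlinearity being algebraic and real–analytic in $(u,\bar u)$). For the $\dot H^{-\beta}$–part one uses $0<2\beta<n$: since $|\xi|^{-\beta}$ is the symbol of the Riesz potential $I_\beta$, Hardy--Littlewood--Sobolev gives $\norm{\nn(u)}_{\dot H^{-\beta}}\lesssim \norm{\,|u|^{3}\,}_{L^{p}}=\norm{u}_{L^{3p}}^{3}$ with $\frac1p=\frac12+\frac\beta n$, and this space–time norm of $u$ is absorbed by H\"older in time, the low–frequency Strichartz bounds of Step~1, and the Sobolev embeddings for $\x$ (using $s\ge\frac{n-1}2$). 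Hence, on $[0,T]$,
\begin{equation*}
	\norm{\,\textstyle\int_0^t S(t-t')\nn(u)(t')\,\dd t'\,}_{Y_T}\lesssim T^{\theta}\,\norm{u}_{Y_T}^{3}
\end{equation*}
for some $\theta>0$, and likewise for differences $\nn(u)-\nn(v)$.

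\emph{Step 3 (Contraction, conclusion, and the main obstacle).} On the ball $\{u\in Y_T:\norm{u}_{Y_T}\le 2C\norm{u_0}_{H^s_\beta}\}$ consider $\Phi(u)(t)=S(t)u_0+\ii\int_0^tS(t-t')\nn(u)(t')\,\dd t'$; Steps~1--2 show that for $T=T(\norm{u_0}_{H^s_\beta})$ small enough $\Phi$ maps this ball into itself and is a contraction there, so it has a unique fixed point, which is the desired solution. Applying the same estimates to differences gives uniqueness in $Y_T$ and the (locally Lipschitz) continuous dependence of $u$ on $u_0\in H^s_\beta$, and a standard persistence/continuation argument accommodates any admissible $s$. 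The decisive difficulty is purely linear: because $\frac{\dd^2m}{\dd r^2}\cdot\frac{\dd m}{\dd r}$ changes sign (when $\e<0$, $m''$ vanishes at some $r_0>0$), the Cho--Lee \cite{cholee} and Guo--Peng--Wang \cite{gpw} frameworks do not apply, and the singularity of $m$ at the origin rules out any scale–invariant dispersive bound; one must localize in frequency (Lemma~\ref{lm-LocStr}) and then recombine the pieces while simultaneously controlling the degenerate sphere $|\xi|=r_0$ and the low–frequency singularity. It is this recombination — not the cubic nonlinearity — that pins the regularity threshold at $s=\frac{n-1}2$.
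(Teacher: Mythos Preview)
Your framework differs from the paper's: you run a direct Strichartz contraction in $C([0,T];H^s_\beta)\cap L^q_tL^r_x$, whereas the paper works in the Bourgain space $X^{s,b}_\beta$ and reduces everything to the trilinear estimate of Lemma~\ref{lm-bilest}, proved by a dyadic case analysis that combines the localized Strichartz bound of Lemma~\ref{lm-LocStr} with a genuine bilinear $L^2_{t,x}$ refinement
\[
\norm{P_\mu u\,P_\lambda v}_{L^2_{t,x}}\lesssim \mu^{\frac{n-1}{2}}\lambda^{-\frac12}\norm{P_\mu u}_{X^{0,b}_0}\norm{P_\lambda v}_{X^{0,b}_0}\qquad(1\ll\mu\ll\lambda),
\]
obtained from the curvature of the surface $\tau=m(\xi)$.

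The substantive gap is your account of where the threshold $s\ge\tfrac{n-1}{2}$ comes from. You assert that the dyadic Strichartz reassembly is lossy by exactly half a derivative, and that ``it is this recombination --- not the cubic nonlinearity --- that pins the regularity threshold.'' This is the opposite of what happens. Lemma~\ref{lm-dispest} gives the \emph{lossless} Schr\"odinger decay $|t|^{-n/2}$ for every $\lambda\gg1$, with a constant independent of $\lambda$; the resulting localized Strichartz estimate \eqref{Strest1d} carries no derivative loss whatsoever (and it is stated only for $\lambda\gg1$, so your ``uniformly in $N$'' is not available --- low frequencies are handled by Bernstein, not Strichartz). In the paper the exponent $\tfrac{n-1}{2}$ is produced entirely inside the \emph{nonlinear} trilinear analysis: it appears first in the high--high--high$\to$high block $\lambda_1\sim\lambda_2\sim\lambda_3$, $\lambda_4\gg1$ (where Strichartz alone gives $C(\lambda)\sim\lambda_4^{n/2+s}\lambda_3^{n/2-1-3s}$ and summation forces $s\ge\tfrac{n-1}{2}$), and then in the separated--frequency blocks $\lambda_1\ll\lambda_2$, where one \emph{must} invoke the bilinear refinement above to obtain the factor $\lambda_1^{(n-1)/2}\lambda_2^{-1/2}$. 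Your Step~2 (``fractional Leibniz and H\"older, exactly as for cubic NLS'') does not capture this bilinear gain, and without it there is no mechanism in your outline that singles out $\tfrac{n-1}{2}$; the sentence about a half-derivative Strichartz loss is a placeholder, not an argument. In short: the decisive difficulty is multilinear, not linear, and the missing ingredient in your proposal is precisely the bilinear $L^2$ estimate and the paraproduct case analysis in the weighted space $H^s_\beta$ that the paper carries out in Section~\ref{sec-3}.
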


     	Now that we have established the local well-posedness of \eqref{rnls}, it is natural to inquire whether the solutions can be extended globally in time. The global well-posedness becomes an easy consequence of the energy conservation law provided in the defocusing case $\varsigma=-1$  and $\varepsilon>0$. However, in the focusing case $\varsigma=1$, we control the term $\|u\|_{L^4(\rn)}^4$ by a Sobolev-type embedding associated with \eqref{rnls} (see Lemma \ref{embeding-lemma}). This embedding produces a constant, the sharp constant of Lemma \ref{embeding-lemma}, which plays a crucial role in finding the conditions (see Theorems \ref{global conditions} and \ref{Interpolated global conditions}) under which the local solution is uniformly bounded in the energy space $\x$. Thus, in the quest for optimal conditions, we strive to determine the sharp constant of Lemma \ref{embeding-lemma}, which, in turn, is tightly connected to the existence of standing waves of the form $\exp(\ii\omega t)\ff(x)$ of \eqref{rnls}.
	\begin{theorem}\label{global conditions}
		Let $\psi$ and $W$ be the ground states of 
  	\begin{equation}\label{zero-beta-ground}
			-\Delta\psi+\psi=\psi^{p+1}
		\end{equation}
  and  	\begin{equation}\label{energy-critical}
			-\Delta W=W^{\frac{n+2}{n-2}}, 
		\end{equation}
		respectively.	Suppose \eqref{rnls} is locally well-posed in $X_\beta$. Moreover, assume 
		\begin{enumerate}[(I)]
			\item in the defocusing case $\varsigma=-1$ that $n\geq1$; or
			\item in the focusing case $\varsigma=+1$ that one of the following cases occurs:
			\begin{enumerate}[(i)]
				\item $n=1$;
				\item $n=2$ and $\F(u_0)<\F(\psi)$;
				\item $n=3$   and 
				\begin{equation}\label{cond2}
					\E(u_0)\F(u_0)
					<
					\E_0(\psi)\F(\psi)
				\end{equation}	
				\begin{equation}\label{cond1}
					\|u_0\|_{\dot{X}_\beta}^2\F (u_0)
					<
					\|\nabla\psi\|_\lt^2\F (\psi),
				\end{equation}
				where
				\[
				\E_0(u(t))=\frac12\int_\rn |\nabla u(x,t)\|^2 \dx
				-
				\frac{\varsigma}{4}\int_\rn | u(x,t)|^{4} \dx ;
				\]
				\item $n=4$ and
				\[
				\E(u_0)<\E_0(W),\qquad\|u_0\|_{\dot{X}_\beta}<\|\nabla W\|_\lt,
				\]
				\item 	  $1\leq n\leq 4$ and
				\begin{equation}\label{cond-mass-zero} 
					\begin{split}
						\|u_0\|_{\dot{X}_\beta} < C_{\beta,n,2} \|Q_\ast\|_{\dot{X}_\beta},\qquad
						\E(u_0)< C_{\beta,n,2} \E(Q_\ast),
					\end{split}
				\end{equation}
				where $Q_\ast$ is a ground state of \eqref{gs-mass-zero} with $p=2$ and $C_{\beta,n,2}$ is defined as in Lemma \ref{lemma-best-masszero}.
			\end{enumerate}	
		\end{enumerate}
		Then the local solution of \eqref{rnls} with initial data $u_0$ is uniformly bounded in $X_\beta$.
	\end{theorem}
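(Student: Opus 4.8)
The plan is to run a continuity (bootstrap) argument on the conserved energy, using the sharp Gagliardo--Nirenberg-type embedding of Lemma \ref{embeding-lemma} to turn energy conservation into an algebraic obstruction that traps $\|u(t)\|_\xx$ below the ground-state threshold. Since \eqref{rnls} is assumed locally well-posed in $\x$, the local solution satisfies $\F(u(t))=\F(u_0)$ and $\E(u(t))=\E(u_0)$; moreover $\|u\|_\x^2\sim\|u\|_{H^1(\rn)}^2+\|D^{-\beta}u\|_\lt^2$ and $\|u\|_\lt^2=\F(u_0)$ is conserved, so a uniform $\x$-bound is equivalent to a uniform bound on $\|\nabla u(t)\|_\lt$ and $\|D^{-\beta}u(t)\|_\lt$, i.e. on
\[
\tfrac12\|\nabla u(t)\|_\lt^2+\tfrac{\e}{2}\|D^{-\beta}u(t)\|_\lt^2=\E(u_0)+\tfrac{\varsigma}{4}\|u(t)\|_{L^4(\rn)}^4 .
\]

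In the defocusing case $\varsigma=-1$ (with $\e>0$) both $\tfrac14\|u\|_{L^4(\rn)}^4$ and $\tfrac{\e}{2}\|D^{-\beta}u\|_\lt^2$ are nonnegative, so the displayed identity gives at once $\tfrac12\|\nabla u(t)\|_\lt^2+\tfrac{\e}{2}\|D^{-\beta}u(t)\|_\lt^2\le\E(u_0)$, proving case (I) with no further work (when $\e\le0$ one interpolates $\|D^{-\beta}u\|_\lt$ between $\|u\|_\lt$ and $\|\nabla u\|_\lt$ via Lemma \ref{embeding-lemma} and absorbs). In the focusing case the task is to bound $\|u\|_{L^4(\rn)}^4$ from above; Lemma \ref{embeding-lemma} supplies an inequality of the form $\|u\|_{L^4(\rn)}^4\le C\,\|u\|_\xx^{a}\,\|\nabla u\|_\lt^{b}\,\F(u)^{c}$, with exponents fixed by scaling and with \emph{sharp} constant $C$ expressed through the ground states $\psi$, $W$, or $Q_\ast$ of \eqref{zero-beta-ground}, \eqref{energy-critical}, \eqref{gs-mass-zero}. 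For $n=1$ one has $b<2$, and after inserting this bound into the energy identity, Young's inequality absorbs $\|\nabla u\|_\lt^2$, giving the bound unconditionally. For $n=2$ (mass-critical cubic) the bound reads $\|u\|_{L^4(\rn)}^4\le C\,\F(u)\,\|\nabla u\|_\lt^2$ with $C^{-1}=\F(\psi)$, so the energy identity becomes $\tfrac12\paar{1-\F(u_0)/\F(\psi)}\|\nabla u(t)\|_\lt^2+\tfrac{\e}{2}\|D^{-\beta}u(t)\|_\lt^2\le\E(u_0)$, and the hypothesis $\F(u_0)<\F(\psi)$ makes the coefficient positive.

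For $n=3$ and $n=4$ no coercivity survives and the continuity argument is essential. Set $y(t)$ equal to the scaling-invariant quantity appearing in the hypotheses: $\|u(t)\|_\xx^2\,\F(u_0)$ and $\|\nabla u(t)\|_\lt^2\,\F(u_0)$ for $n=3$ as in \eqref{cond1}--\eqref{cond2}, $\|u(t)\|_\xx^2$ and $\E(u(t))$ for $n=4$, and $\|u(t)\|_\xx^2,\ \E(u(t))$ for case (v). Substituting Lemma \ref{embeding-lemma} into the conserved energy produces a polynomial inequality $G(y(t))\le \E(u_0)\F(u_0)$ (respectively $\le\E(u_0)$) whose sublevel set, for data strictly below the ground-state threshold, splits into two connected components separated by the value attained at $\psi$, $W$, or $Q_\ast$; by \eqref{cond2}, \eqref{cond1}, the stated $n=4$ conditions, or \eqref{cond-mass-zero}, the initial point $y(0)$ lies in the lower component. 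Since $t\mapsto y(t)$ is continuous on the maximal existence interval (from well-posedness in $\x$) and cannot take values in the gap, it stays in the lower component, which yields the uniform $\x$-bound. For $n=4$ one identifies the threshold using $\E_0(W)=\tfrac1n\|\nabla W\|_\lt^2$ together with the Aubin--Talenti identity $\|\nabla W\|_\lt^2=\|W\|_{L^4(\rn)}^4$.

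The main obstacle is precisely this sub-threshold, mass- or energy-critical regime ($n=3$, $n=4$, and case (v)): one must (a) have the exact algebraic form of Lemma \ref{embeding-lemma} with the correct sharp constant --- this is where the link to standing waves $\exp(\ii\omega t)\ff(x)$ of \eqref{rnls} enters --- (b) verify that the ``good'' side of the threshold is invariant along the flow, not merely at $t=0$, and (c) correctly accommodate the extra $\dot H^{-\beta}$ contribution, which modifies the Pohozaev/virial identities underlying the ground-state characterization and forces the use of the modified ground state $Q_\ast$ of \eqref{gs-mass-zero} and the constant $C_{\beta,n,2}$ of Lemma \ref{lemma-best-masszero} rather than the classical ones. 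Arranging the two conservation laws $\E$ and $\F$ so that the functional controlled by the continuity argument is exactly the one the ground state optimizes is the delicate step.
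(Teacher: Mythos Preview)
Your proposal is correct and follows essentially the same strategy as the paper: energy conservation plus a sharp Gagliardo--Nirenberg-type inequality, with a continuity argument in the (mass/energy) critical and supercritical cases.  The paper packages the continuity step via B\'egout's lemma (Lemma \ref{begout}), which is precisely your ``polynomial inequality whose sublevel set splits into two components'' argument; and rather than a single form of Lemma \ref{embeding-lemma}, it invokes the specific sharp inequalities \eqref{gag-l} (with constant \eqref{without-r}) for cases (i)--(iii), the critical Sobolev embedding \eqref{gn-crit}--\eqref{gn-critical-cn} for case (iv), and Lemma \ref{lemma-best-masszero} for case (v), after first replacing $\|\nabla u\|_\lt$ by the larger $\|u\|_{\dot X_\beta}$ on the right-hand side.

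One correction: your parenthetical about handling the defocusing case when $\e\le 0$ by interpolating $\|D^{-\beta}u\|_\lt$ between $\|u\|_\lt$ and $\|\nabla u\|_\lt$ is wrong --- $\dot H^{-\beta}$ is a \emph{negative} Sobolev norm and admits no such interpolation.  The paper explicitly restricts Section \ref{sec-4} to $\e>0$ for exactly this reason.
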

	Furthermore, it is of independent interest to investigate the behavior of $\ff(x)$. The subsequent result is established by utilizing the properties of the Gelfand–Shilov space and the H\"{o}rmander class:
	
	\begin{theorem}\label{decay-theor}
		Let $\beta < n/2$. There exists $\beta_\ast$ such that for any $\omega > \beta_\ast$,  \eqref{rnls} possesses a nontrivial standing wave solution $\exp(\ii\omega t)\ff(x)$ with $\ff \in X_\beta$. Moreover, it holds that $\langle x \rangle^{n + 4\beta + 2}\ff \in L^\infty(\mathbb{R}^n)$. Additionally, $\ff$ and $L_\beta\ff$ belong to $H^\infty(\mathbb{R}^n)$.
	\end{theorem}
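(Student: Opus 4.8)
\emph{Reduction.} The plan is to recast the standing-wave problem as a nonlocal elliptic equation, to produce a ground state by a concentration-compactness argument, and then to read its regularity and decay off the equation itself. Substituting $u(x,t)=\ee^{\ii\omega t}\ff(x)$ into \eqref{rnls} and cancelling $\ee^{\ii\omega t}$ shows that $\ff$ must solve
\[
-\Delta\ff+\omega\ff+\e L_\beta\ff=\varsigma|\ff|^2\ff\qquad\text{on }\rn,
\]
so $\ff$ is a critical point of $S_\omega(\ff)=\tfrac12\norm{\nabla\ff}_\lt^2+\tfrac\omega2\norm{\ff}_\lt^2+\tfrac\e2\scal{\ff,L_\beta\ff}-\tfrac\varsigma4\norm{\ff}_{L^4(\rn)}^4$ on $\x$, where $\scal{\ff,L_\beta\ff}=\norm{D^{-\beta}\ff}_\lt^2=\norm{\ff}_\xx^2$; by Lemma~\ref{density-space} one has $\x\hookrightarrow L^4(\rn)$ for $n\le4$, so $S_\omega$ is well defined on $\x$. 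For definiteness we treat the repulsive case $\e>0$ emphasised in the Introduction; the sign of $\e$ is where the threshold $\beta_\ast$ really matters.

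\emph{Existence and the constant $\beta_\ast$.} Set $\beta_\ast:=-\inf_{\xi\ne0}\bigl(|\xi|^2+\e|\xi|^{-2\beta}\bigr)$, i.e. minus the bottom of the spectrum of $-\Delta+\e L_\beta$ on $\lt$, enlarged if necessary by any further lower bound needed below; since the radial symbol $|\xi|^2+\e|\xi|^{-2\beta}$ has a strictly positive minimum for $\e>0$, we may take $\beta_\ast\le0$. For $\omega>\beta_\ast$ the quadratic form $Q_\omega(\ff):=\norm{\nabla\ff}_\lt^2+\omega\norm{\ff}_\lt^2+\e\norm{\ff}_\xx^2$ is positive and comparable with $\norm{\ff}_\x^2$, hence
\[
m_\omega:=\inf\bigl\{Q_\omega(\ff):\ \ff\in\x,\ \norm{\ff}_{L^4(\rn)}=1\bigr\}
\]
is finite and, by $\x\hookrightarrow L^4$, strictly positive. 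A minimising sequence is bounded in $\x$; the only obstruction to compactness is translation, which we remove in the classical way — vanishing is excluded by the Lions vanishing lemma together with $\x\hookrightarrow L^4$, and dichotomy by strict subadditivity of the split-mass infima, the nonlocal term $\norm{\cdot}_\xx^2$ being nonnegative, translation-invariant and weakly lower semicontinuous. After a translation the minimising sequence converges strongly in $\x$ to a minimiser $\ff_\ast\ne0$; its Euler--Lagrange equation is $-\Delta\ff_\ast+\omega\ff_\ast+\e L_\beta\ff_\ast=m_\omega\,\varsigma|\ff_\ast|^2\ff_\ast$, so $\ff:=m_\omega^{1/2}\ff_\ast\in\x$ solves the displayed equation and $\ee^{\ii\omega t}\ff$ is the required standing wave. (The same extremal problem pins down the sharp constant of Lemma~\ref{embeding-lemma}, resp. Lemma~\ref{lemma-best-masszero}.) When $\e<0$ the symbol $|\xi|^2+\e|\xi|^{-2\beta}$ has a zero and $Q_\omega$ stays indefinite at low frequencies for every $\omega$, so that case would require either restricting to $\e\ge0$ or replacing this minimisation by a linking argument.

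\emph{Regularity.} Rewriting the equation as $\ff=\varsigma\,(-\Delta+\omega+\e L_\beta)^{-1}(|\ff|^2\ff)$, the Fourier multiplier $g(\xi)=(|\xi|^2+\omega+\e|\xi|^{-2\beta})^{-1}$ is, for $\omega>\beta_\ast$, well defined, obeys $|g(\xi)|\lesssim\langle\xi\rangle^{-2}$, and near the origin satisfies $g(\xi)=\e^{-1}|\xi|^{2\beta}\bigl(1+O(|\xi|^{2\beta})\bigr)$; in particular $g(D)$ gains two derivatives at high frequency. Starting from $\ff\in\x\subset H^1$ and iterating, using the algebra property of $H^s$ for $s>n/2$ to handle the cubic term, gives $\ff\in H^s(\rn)$ for every $s$, hence $\ff\in H^\infty(\rn)$. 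For $L_\beta\ff$ the decisive feature is the low-frequency vanishing $\what\ff(\xi)=g(\xi)\,\what{|\ff|^2\ff}(\xi)=O(|\xi|^{2\beta})$ forced by the convolution potential: indeed $\what{L_\beta\ff}(\xi)=|\xi|^{-2\beta}\what\ff(\xi)=\ti g(\xi)\,\what{|\ff|^2\ff}(\xi)$ with $\ti g(\xi)=(|\xi|^{2\beta+2}+\omega|\xi|^{2\beta}+\e)^{-1}$ bounded, so that $L_\beta\ff\in H^\infty(\rn)$ as well since $|\ff|^2\ff\in H^\infty$.

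\emph{Decay and the main obstacle.} The spatial decay of $\ff$ is governed by the convolution kernel of $g(D)$ acting on $|\ff|^2\ff$. Split $g=g\chi_{\rm hi}+g\chi_{\rm lo}$ with smooth frequency cut-offs. The piece $g\chi_{\rm hi}$ is a classical symbol of order $-2$ lying in a H\"{o}rmander class of analytic, Gelfand--Shilov type, so $g\chi_{\rm hi}(D)$ maps rapidly decaying functions to rapidly decaying ones; re-running the regularity bootstrap on weighted norms shows $|\ff|^2\ff$ decays faster than any power, so this contribution is harmless. The remaining piece $g\chi_{\rm lo}(D)$, with $g\approx\e^{-1}|\xi|^{2\beta}$ near $\xi=0$, has a kernel with a polynomial tail, and a careful expansion of $g$ at the origin together with the rapid decay of $|\ff|^2\ff$ yields $|\ff(x)|\lesssim\langle x\rangle^{-(n+4\beta+2)}$, i.e. $\langle x\rangle^{n+4\beta+2}\ff\in L^\infty(\rn)$. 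The number $\beta_\ast$ is exactly the threshold forced in the existence step. We expect the main obstacle to be the concentration-compactness argument — controlling the singular nonlocal term and ruling out dichotomy uniformly in $\omega>\beta_\ast$ — with a secondary technical difficulty in tracking the low-frequency singularity of $g$ in the decay step.
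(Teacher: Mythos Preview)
Your overall architecture---variational existence via constrained minimisation (the paper uses the equivalent Mountain-Pass formulation and arrives at the same $\beta_\ast$), $H^\infty$ regularity by bootstrapping through the multiplier $g(\xi)=\widehat{K_\beta}(\xi)$, and a high/low frequency split of $g$ to read off the spatial decay---matches the paper's proof closely.

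The genuine gap is in the decay step. You write that ``re-running the regularity bootstrap on weighted norms shows $|\ff|^2\ff$ decays faster than any power,'' and then infer $|\ff(x)|\lesssim\langle x\rangle^{-(n+4\beta+2)}$ from the low-frequency tail of $g$. This is circular: the low-frequency multiplier $g\chi_{\rm lo}\sim\e^{-1}|\xi|^{2\beta}$ has an inverse Fourier transform with only polynomial decay, so $\ff$ itself has only polynomial decay, and $|\ff|^2\ff$ never decays faster than every power. The weighted bootstrap cannot be launched from $\ff\in H^\infty$ alone, which gives no pointwise decay at infinity. The paper closes this with an explicit preliminary step (following \cite{esfahan-phys-a}): from the regularity one has $|\ff|^3\le\delta|\ff|$ outside a large ball, and a truncated-weight argument in $L^q$ yields $\langle x\rangle^\alpha\ff\in L^\infty$ for some small $\alpha>0$. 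Only then does the iteration $\ff\in X^\alpha\Rightarrow\ff^3\in X^{3\alpha}\Rightarrow\ff=K_\beta\ast\ff^3\in X^{\min(3\alpha,\,n+4\beta+2)}$ run up to the ceiling. The ceiling itself is not asserted but derived: the paper expands $h_1:=g\chi_{\rm lo}$ as $\e^{-1}\chi(\xi)|\xi|^{2\beta}\sum_k(-1)^k\bigl((\omega|\xi|^{2\beta}+|\xi|^{2(1+\beta)})/\e\bigr)^k$, invokes H\"ormander's theorems on Fourier transforms of homogeneous distributions and a weighted Young inequality to place $h_1$ in $\hat{X}^{n+4\beta+2}$, while $h_2:=g-h_1\in S^{-2}$ has inverse transform in the Gelfand--Shilov class $S_1^1$ and hence acts on every $X^s$. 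Your proposal states the exponent $n+4\beta+2$ but does not derive it, and the missing initial-decay step is exactly where the argument, as written, fails.

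A smaller point: invoking ``the algebra property of $H^s$ for $s>n/2$'' presupposes that the bootstrap has already reached $s>n/2$; starting from $\ff\in H^1$ in dimensions $n\ge3$ this needs an intermediate step. The paper bridges this by first proving $\ff\in L^\infty$ from $L^q$ mapping properties of $K_\beta$, which then feeds the Sobolev bootstrap.
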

	
	In addition to the above theorem, through the variational method, we demonstrate in Theorem \ref{convergence-theorem} that $\ff$ asymptotically converges to the unique ground state of  
	\[
	\omega\ff_0-\Delta\ff_0=\ff_0^3
	\]
	when the convolution coefficient $\varepsilon$ tends to zero.
	
\subsection*{Some notations and tools}
	The local well-posedness result outlined in Theorem \ref{thmlwp} can be established via the standard contraction argument, using the
	the Bourgain-type space $X^{s,b}_\beta$ with norm
	\begin{align*}
		\| u \|_{X^{s,b}_\beta} := \norm{| \xi |^{-\beta}  \angles{ \xi }^{s+\beta} \angles{ \tau- m(\xi)}^b \widetilde{u} (\xi, \tau)}_{L^2_{\tau, \xi}}.
	\end{align*}
	The restriction to the time slab $(0, T)\times \mathbb R^n$
of the Bourgain space, denoted by $X^{s, b}_{\beta, T}$,
is a Banach space when equipped with the norm
$$
\| u \|_{X^{s, b}_{\beta, T}}  = \inf \left\{ \| v \|_{X^{s, b}_{\beta}}: \ v = u \text{ on }  (0, T) \times \mathbb{R}^n \right\}.
$$
So the result in Theorem \ref{thmlwp}  follows from  the following trilinear estimate whose proof is given in the Appendix.
\begin{lemma}
		\label{lm-bilest}
		Let $1/2<b<1$ , $0 < T < 1$,  $0<\beta<n/2$ and $s\geq(n-1)/2$. Then,
		\begin{equation}
			\label{biest1}
			\norm{ u_1 \bar u_2 u_3}_{X_{\beta, T}^{s, b-1}}  
			\lesssim   T^{1-b}
			\ \prod_{j=1}^3
			\norm{u_j }_{ X_{\beta, T}^{s, b}  }.
		\end{equation}
		
	\end{lemma}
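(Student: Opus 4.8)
The plan is to reduce \eqref{biest1} to an $L^2$-based trilinear convolution estimate in Fourier space and then exploit the weight $|\xi|^{-\beta}\angles{\xi}^{s+\beta}$ together with the dispersive gain encoded in $\angles{\tau-m(\xi)}^b$. First I would dispose of the time-localization factor $T^{1-b}$: by the standard Bourgain-space estimates (e.g.\ $\|u\|_{X^{s,b'}_{\beta,T}}\lesssim T^{b-b'}\|u\|_{X^{s,b}_{\beta,T}}$ for $b'<b$), it suffices to prove the untruncated estimate with $b-1$ replaced by $b-1+\delta$ for a small $\delta>0$, or equivalently to run a duality argument and absorb one power of $T$ at the end. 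So the core is to bound, for the dual variable $v$ with $\|v\|_{X^{-s,1-b}_\beta}\le 1$, the quantity
\[
\int_{\substack{\xi_1-\xi_2+\xi_3=\xi\\ \tau_1-\tau_2+\tau_3=\tau}}
\frac{\angles{\xi}^{s}}{|\xi|^{\beta}}\angles{\xi}^{\beta}
\prod_{j=1}^3 \frac{|\xi_j|^{\beta}}{\angles{\xi_j}^{s+\beta}}\,
\widetilde v\,\widetilde{u_1}\,\overline{\widetilde{u_2}}\,\widetilde{u_3},
\]
against $\prod\|u_j\|_{X^{s,b}_\beta}$, after renormalizing each factor to have its full weight. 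The key algebraic input is the resonance identity for $m(\xi)=|\xi|^2+\e|\xi|^{-2\beta}$: on the convolution set one has $\tau-m(\xi)-\sum \pm(\tau_j-m(\xi_j))$ equal to $m(\xi)-m(\xi_1)+m(\xi_2)-m(\xi_3)$, so at least one of the four modulation weights $\angles{\tau_j-m(\xi_j)}$, $\angles{\tau-m(\xi)}$ is $\gtrsim |m(\xi)-m(\xi_1)+m(\xi_2)-m(\xi_3)|$; the quadratic part of this is the usual NLS resonance $\sim |\xi_1-\xi_2||\xi_2-\xi_3|$ (up to relabeling), and the singular part $\e(|\xi|^{-2\beta}-\cdots)$ only helps when frequencies are small, which is precisely the regime controlled by the $|\xi|^{-\beta}$ weights.

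The next step is a Littlewood--Paley/dyadic decomposition: write $u_j=\sum_{N_j}P_{N_j}u_j$ and $v=\sum_N P_N v$, and split into the high-frequency regime (all $N_j\gtrsim 1$) and the regimes where some frequency is $\lesssim 1$. In the high-frequency regime the weights $|\xi_j|^{\beta}/\angles{\xi_j}^{s+\beta}\sim N_j^{-s}$ and $\angles{\xi}^{s}/|\xi|^{\beta}\cdot\angles{\xi}^{\beta}\sim N^s$, so the estimate collapses to the classical cubic NLS $X^{s,b}$ trilinear estimate for $s\ge (n-1)/2$, which in turn follows from the bilinear Strichartz / $L^4_{t,x}$ and $L^6_{t,x}$ refinements of the dispersive estimate recorded before Lemma \ref{lm-LocStr}; here one uses the case-by-case modulation analysis (whichever of the four weights is largest) combined with $L^2\times L^\infty$ or bilinear estimates. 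In the low-frequency regime, say $N\lesssim 1$ or some $N_j\lesssim 1$, the factor $|\xi|^{-\beta}$ (or $|\xi_j|^{\beta}$ in the numerator of a companion factor) must be handled: since $2\beta<n$, the function $|\xi|^{-2\beta}$ is locally integrable, so Hölder/Young on the small cube together with $L^2$-control of the remaining factors absorbs the singularity with an $N^{n/2-\beta}$ or $N^{\beta}$ gain summable over dyadic $N$. One has to be a little careful when two frequencies are simultaneously small, but the constraint $\xi_1-\xi_2+\xi_3=\xi$ forces the corresponding output frequency to be small as well, and the product of the small-frequency weights is again controlled by local integrability of $|\xi|^{-2\beta}$.

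I expect the main obstacle to be the interaction regime where the output (or an internal) frequency is small while the others are large — that is, high $\times$ high $\to$ low interactions combined with the singular weight. There the naive pairing of weights does not immediately reproduce the NLS resonance gain, and one must use the modulation relation carefully: when $|\xi|$ is small the resonance function $|m(\xi)-m(\xi_1)+m(\xi_2)-m(\xi_3)|$ is dominated by the large quadratic terms $|\xi_1|^2+|\xi_2|^2+|\xi_3|^2\sim N_{\max}^2$, giving a very large modulation on some factor, which buys back plenty of derivatives; the singular $\e|\xi|^{-2\beta}$ term is then a harmless lower-order perturbation. Quantifying this "plenty of derivatives beats the $|\xi|^{-\beta}$ singularity" trade-off uniformly in all dyadic parameters, and checking that the threshold $s\ge (n-1)/2$ is exactly what is needed in the borderline high$\times$high$\times$high$\to$high case (which is dimension-driven, coming from the $t^{-n/2}$ dispersive decay and the resulting $L^{2(n+2)/n}_{t,x}$ Strichartz norm), is the technical heart of the argument. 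Since the detailed case analysis is lengthy but routine given the dispersive estimates already established, it is deferred to the Appendix.
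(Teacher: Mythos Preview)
Your overall architecture---duality, dyadic decomposition, separate treatment of high and low frequencies, and the observation that $\beta<n/2$ makes $|\xi|^{-2\beta}$ locally integrable---is sound and would lead to a proof. But your route diverges from the paper's in one structural way and contains one incorrect claim.

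\textbf{Structural difference.} The paper does \emph{not} use the resonance identity or any modulation analysis. Instead it immediately discards the output modulation weight via the crude bound
\[
\norm{u}_{X^{s,b-1}_{\beta,T}}\lesssim T^{1-b}\norm{u}_{L^2_T H^s_\beta},
\]
so that the target becomes a pure $L^2_T H^s_\beta$ estimate. After duality and dyadic decomposition, each block is controlled entirely by physical-space tools: Bernstein, the linear Strichartz estimate of Lemma~\ref{lm-LocStr}, and a bilinear refinement $\norm{P_\mu u\,P_\lambda v}_{L^2_{t,x}}\lesssim \mu^{(n-1)/2}\lambda^{-1/2}\norm{P_\mu u}_{X^{0,b}_0}\norm{P_\lambda v}_{X^{0,b}_0}$ for $1\ll\mu\ll\lambda$. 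The case analysis then reduces to checking that the dyadic weight $C(\lambda)=B(\lambda)\,\lambda_4^{-\beta}\angles{\lambda_4}^{s+\beta}\prod_j\lambda_j^{\beta}\angles{\lambda_j}^{-s-\beta}$ is summable. This is shorter than a full modulation analysis because the $b-1<0$ exponent is simply thrown away; what you lose in sharpness you gain in bookkeeping, and the threshold $s\ge(n-1)/2$ still comes out, driven by the bilinear Strichartz gain rather than the resonance function.

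\textbf{The incorrect claim.} Your assertion that in the high$\times$high$\to$low regime ``the resonance function $|m(\xi)-m(\xi_1)+m(\xi_2)-m(\xi_3)|$ is dominated by the large quadratic terms $|\xi_1|^2+|\xi_2|^2+|\xi_3|^2\sim N_{\max}^2$'' is false for $n\ge 2$. With $\xi=\xi_1-\xi_2+\xi_3$ the quadratic part factors as $2(\xi-\xi_1)\cdot(\xi-\xi_3)$; when $|\xi|$ is small this is $\approx 2\xi_1\cdot\xi_3$, which vanishes whenever $\xi_1\perp\xi_3$, so no modulation gain is available there. Fortunately this is not fatal to your outline: the low-output singularity $|\xi|^{-\beta}$ is already absorbed by your earlier local-integrability (equivalently, Bernstein) argument, which produces an $N^{n/2-\beta}$ factor and is exactly how the paper handles $\lambda_4\lesssim 1$. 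You should therefore drop the resonance justification for that regime and rely on Bernstein alone; the modulation identity is not needed anywhere in the proof.
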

One crucial tool in the proof of  Lemma \ref{lm-bilest} is a frequency localized  $L^1_x-L^{\infty}_x$ decay estimate for the linear propagator associated to \eqref{rnls}, viz.
	$
	\ee^{ \ii t m(D)},
	$
	where the symbol $D$ presents the operator $-\ii\nabla$. The frequency localization is defined as follows.  Fix an even smooth function $\chi \in C_0^{\infty}(\mathbb R)$ such that
	\begin{equation*}
		0 \le \chi \le 1, \quad
		\chi_{|_{[-1,1]}}=1 \quad \mbox{and} \quad  \mbox{supp}(\chi)
		\subset [-2,2]
	\end{equation*}
	and set
	$$
	\rho(s)
	=\chi\left(s\right)-\chi \left(2s\right).
	$$
	For a dyadic number
	$\lambda \in  2^\Z$,  we set $\rho_{\lambda}(s):=\rho\left(s/\lambda\right)$, and thus $\supp \rho_\lambda= 
	\{ s\in \R: \lambda/ 2 \le |s| \le 2\lambda \}$. 
	Now define the frequency projection $P_\lambda$ via
	\begin{align*}
		\widehat{P_{\lambda} f}(\xi)  = \rho_\lambda(|\xi|)\widehat { f}(\xi) .
	\end{align*}
	We sometimes write $f_\lambda:=P_\lambda f $, so that
	\[ f=\sum_{\lambda  } f_\lambda ,\]
	where summations throughout the paper are done over dyadic numbers in $ 2^\Z$.

	The decay estimate is given as follows. The proof is given in the Appendix below.
	\begin{lemma}  \label{lm-dispest}
		Let $m(r)=r^2+\e  r^{-2\beta}$ with $0<\beta <1$ and $\varepsilon =\pm 1$.
		
		Then, for $\lambda\gg 1$,
		\begin{align}
			\label{dispest}
			\|  \ee^{\ii tm(D)} P_{\lambda} f  \|_{L^\infty_x(\R^n)} &\le C  |t|^{-\frac n2}  \| f\|_{L_x^1(\R^n)} \end{align}
		for all $f \in \mathcal{S}(\R^n)$.
	\end{lemma}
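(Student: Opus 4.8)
\textbf{Strategy for the proof of Lemma \ref{lm-dispest}.} The plan is to write out the convolution kernel of $\ee^{\ii t m(D)}P_\lambda$, use the radial symmetry of the symbol to reduce to a one-dimensional integral via the Hankel transform, and then run a stationary/non-stationary phase analysis. Denoting by $K_{t,\lambda}$ the kernel, so that (up to a dimensional constant)
\[
K_{t,\lambda}(x)=\int_{\rn}\ee^{\ii(x\cdot\xi+tm(|\xi|))}\,\rho_\lambda(|\xi|)\d\xi ,
\]
estimate \eqref{dispest} is equivalent to $\sup_{x\in\rn}|K_{t,\lambda}(x)|\lesssim|t|^{-n/2}$. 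Since $\rho_\lambda(|\cdot|)$ is radial, its Fourier transform is a Hankel transform and
\[
K_{t,\lambda}(x)=c_n\,|x|^{1-\frac n2}\int_0^\infty J_{\frac n2-1}(r|x|)\,\ee^{\ii t m(r)}\,\rho_\lambda(r)\,r^{\frac n2}\d r ,
\]
where $J_\nu$ is the Bessel function of the first kind of order $\nu$. The structural observation---and the reason for the hypothesis $\lambda\gg1$---is that on $\supp\rho_\lambda=[\lambda/2,2\lambda]$ the symbol $m(r)=r^2+\e r^{-2\beta}$ is a harmless perturbation of $r^2$ once $\lambda\geq\lambda_0(\beta)$: there $m'(r)\sim\lambda$ and $m''(r)\sim1$ with fixed sign, while $|m^{(k)}(r)|\lesssim\lambda^{2-k}$ for every $k\geq1$. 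In particular the degenerate frequencies of the low-frequency regime, where $m'$ or $m''$ may vanish (compare the discussion around \eqref{osci-int}), are avoided. I also record the crude bound $|K_{t,\lambda}(x)|\leq\int|\rho_\lambda(|\xi|)|\d\xi\lesssim\lambda^n$, which already yields \eqref{dispest} when $|t|\leq\lambda^{-2}$; hence one may assume $|t|\geq\lambda^{-2}$ throughout, and in particular $\lambda^2|t|\geq1$.

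When $n=1$ there is nothing further to do: since $|m''|\gtrsim1$ on $\supp\rho_\lambda$, the van der Corput lemma (Lemma \ref{lm-corput}), applied on each of the two intervals forming $\supp\rho_\lambda$, gives $|K_{t,\lambda}(x)|\lesssim|t|^{-1/2}$ uniformly in $x$---this is precisely the frequency-localized form of \eqref{osci-int}. So assume $n\geq2$. Rescaling $r=\lambda s$ and setting $a:=\lambda|x|$, $\mu:=\lambda^2 t$ and $\phi(s):=t\,m(\lambda s)$, we are reduced to bounding
\[
|x|^{1-\frac n2}\,\lambda^{\frac n2+1}\int_0^\infty J_{\frac n2-1}(as)\,\ee^{\ii\phi(s)}\,\rho(s)\,s^{\frac n2}\d s ,
\]
where now $|\phi'(s)|\sim|\mu|$ (with fixed sign) and $|\phi^{(k)}(s)|\lesssim|\mu|$ for all $k\geq1$ on $s\in[1/2,2]$. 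I then split according to the size of the Bessel argument $a=\lambda|x|$.

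If $a\lesssim1$, then $J_{\frac n2-1}(as)$ and all its $s$-derivatives are $\lesssim a^{\frac n2-1}$ on $[1/2,2]$, so the amplitude is smooth; integrating by parts $N$ times against $\ee^{\ii\phi}$ (no boundary terms appear, since $\supp\rho\subset(0,\infty)$) gains $|\mu|^{-N}$, and after collecting the powers of $\lambda$ and $|x|$ (the powers of $|x|$ cancel) one obtains a bound $\lesssim\lambda^n(\lambda^2|t|)^{-N}$, which is $\lesssim|t|^{-n/2}$ for $N$ a fixed integer with $N\geq n/2$ because $\lambda^2|t|\geq1$. If $a\gtrsim1$, I insert the Bessel asymptotics, written as $J_{\frac n2-1}(z)=\ee^{\ii z}b_+(z)+\ee^{-\ii z}b_-(z)$ for $z\gtrsim1$ with $|b_\pm^{(k)}(z)|\lesssim_k z^{-\frac12-k}$; this turns the radial integral into $\sum_{\pm}\int\ee^{\ii\psi_\pm(s)}A_\pm(s)\d s$ with $\psi_\pm(s)=\pm as+\phi(s)$ and $|A_\pm^{(k)}|\lesssim a^{-1/2}$. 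Since $\phi'$ has fixed sign and size $\sim|\mu|$, the phase $\psi_\pm$ can have a stationary point for at most one choice of sign, and only in the window $a\sim|\mu|$ (equivalently $|x|\sim\lambda|t|$). Away from that window, $|\psi_\pm'|\gtrsim\max(a,|\mu|)$ for both signs while $|\psi_\pm^{(k)}|\lesssim\max(a,|\mu|)$ for $k\geq2$, so $N$ integrations by parts again give $\lesssim\lambda^n(\lambda^2|t|)^{-N}\lesssim|t|^{-n/2}$ (using $|x|\gtrsim\lambda^{-1}$ here). In the window $a\sim|\mu|$, the non-stationary sign is handled by a single integration by parts, whereas for the stationary sign I apply the van der Corput lemma with $|\psi_\pm''(s)|=|t\lambda^2 m''(\lambda s)|\sim|\mu|$, obtaining a factor $|\mu|^{-1/2}\bigl(\|A_\pm\|_{L^\infty}+\|A_\pm'\|_{L^1}\bigr)\lesssim|\mu|^{-1/2}a^{-1/2}$; substituting $a\sim|\mu|\sim\lambda^2|t|$ and $|x|\sim\lambda|t|$, all powers of $\lambda$ cancel and what remains is exactly $|t|^{-n/2}$.

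I expect the main obstacle to be this last case, the stationary window $|x|\sim\lambda|t|$: one must feed the amplitude produced by the Bessel asymptotics into van der Corput and verify that the powers of $\lambda$ coming from the Hankel normalization $|x|^{1-n/2}\lambda^{n/2+1}$, from $\|A_\pm\|_{L^\infty}\sim a^{-1/2}$, and from $|\psi_\pm''|^{-1/2}\sim|\mu|^{-1/2}$ balance precisely to leave $|t|^{-n/2}$. This is also the step at which the obstruction noted in the introduction---that the sign condition on $\frac{\d^2 m}{\d r^2}\cdot\frac{\d m}{\d r}$ of Cho--Lee fails globally---is circumvented: restricting to $\lambda\gg1$ forces $m'$ and $m''$ to be positive on $\supp\rho_\lambda$, so the needed sign/monotonicity information is available there. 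The remaining cases are routine non-stationary phase, and in each the bookkeeping is governed by the single inequality $\lambda^2|t|\geq1$.
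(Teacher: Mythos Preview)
Your proposal is correct and follows essentially the same route as the paper's proof: reduce the kernel to a one-dimensional Bessel integral, split according to $\lambda|x|\lesssim1$ versus $\lambda|x|\gg1$, use repeated integration by parts (the paper's Lemma~\ref{lm-corput1}) in the non-stationary regimes and the Bessel asymptotics plus van der Corput in the stationary window $|x|\sim\lambda|t|$. The only notable difference is cosmetic: you first dispose of $|t|\le\lambda^{-2}$ via the crude bound and then take an integer $N\ge n/2$, whereas the paper writes ``$N=n/2$'' directly---your version is slightly more careful when $n$ is odd, but the substance is identical.
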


	Combining the aforementioned dispersive estimate with the standard $TT^*$ argument, we can derive the following localized Strichartz estimates.
	\begin{lemma} \label{lm-LocStr}
		Let $\lambda \gg 1$ and 
		$(q, r)$ be \emph{Schr\"odinger admissible pair} in the sense that
		\begin{equation} \label{admissible}
			q> 2, \ r\ge 2 \quad \text{and} \quad \frac2q + \frac nr=\frac n2 \, .
		\end{equation} 
		Then,
		\begin{align}
			\label{Strest1d}
			\norm{ \ee^{\ii tm(D)} f_{\lambda}}_{ L^{q}_{t} L^{r}_{ x} (\R^{n+1}) } \lesssim 
			\norm{  f_{\lambda}}_{ L^2_{ x}(\R^n )} ,
		\end{align}
		for all  $f \in \mathcal{S}(\R^n)$.
		Moreover, if $b>1/2$, we have by the transference principle,
		\begin{equation}
			\label{Str-transfer}
			\norm{ u_\lambda }_{L^q_{t} L^r_{x}  (\R^{n+1}) } \lesssim 
			\norm{u_\lambda}_{  X_\beta^{0, b} }.
		\end{equation}
		
	\end{lemma}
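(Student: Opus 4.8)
The plan is to run the standard (non-endpoint) $TT^{*}$ argument, with Lemma~\ref{lm-dispest} supplying the only dispersive input. Fix $\lambda\gg1$ and set $T_t:=\ee^{\ii t m(D)}P_\lambda$, so that $T_tT_s^{*}=\ee^{\ii(t-s)m(D)}P_\lambda^{2}$. Because $\rho_\lambda\in[0,1]$ and $m$ is real (hence $\ee^{\ii t m(D)}$ has unimodular symbol) on $\supp\rho_\lambda$, we have $\norm{T_tT_s^{*}g}_{L^2_x}\le\norm{g}_{L^2_x}$. Writing instead $T_tT_s^{*}=P_\lambda\,\ee^{\ii(t-s)m(D)}P_\lambda$ and using that $P_\lambda$ is bounded on $L^\infty_x$ uniformly in $\lambda$ (its convolution kernel has $\lambda$-independent $L^1_x$ norm), Lemma~\ref{lm-dispest} gives $\norm{T_tT_s^{*}g}_{L^\infty_x}\lesssim|t-s|^{-n/2}\norm{g}_{L^1_x}$. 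Interpolating these two bounds by Riesz--Thorin yields, for every $2\le r\le\infty$ with $\tfrac1r+\tfrac1{r'}=1$,
\begin{equation*}
\norm{T_tT_s^{*}g}_{L^r_x}\lesssim|t-s|^{-n\left(\frac12-\frac1r\right)}\norm{g}_{L^{r'}_x},
\end{equation*}
uniformly in $\lambda\gg1$.

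Next, by the admissibility relation \eqref{admissible} one has $n\left(\tfrac12-\tfrac1r\right)=\tfrac2q$, which lies in $(0,1)$ precisely because $q>2$ (the borderline pair $q=\infty$, $r=2$ being covered trivially by the energy bound, so we may take $q<\infty$). Hence, for $F\in L^{q'}_tL^{r'}_x$,
\begin{equation*}
\norm{\int T_tT_s^{*}F(s)\d s}_{L^r_x}\le\int|t-s|^{-\frac2q}\norm{F(s)}_{L^{r'}_x}\d s,
\end{equation*}
and the one-dimensional Hardy--Littlewood--Sobolev inequality—whose hypotheses, namely $0<\tfrac2q<1$ together with $1+\tfrac1q=\tfrac1{q'}+\tfrac2q$, reduce exactly to $q\in(2,\infty)$—bounds the $L^q_t$ norm of the right-hand side by $\norm{F}_{L^{q'}_tL^{r'}_x}$. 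Since $\norm{T}_{L^2_x\to L^q_tL^r_x}^2=\norm{TT^{*}}_{L^{q'}_tL^{r'}_x\to L^q_tL^r_x}$, this is precisely \eqref{Strest1d}; the endpoint $q=2$ never arises, so no Keel--Tao refinement is needed.

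For \eqref{Str-transfer} I would apply the transference principle. Put $v_\lambda(t):=\ee^{-\ii t m(D)}u_\lambda(t)$; a Fourier transform in time gives $\widetilde{v_\lambda}(\xi,\tau)=\widetilde{u_\lambda}(\xi,\tau+m(\xi))$, whence $u_\lambda(t)=\ee^{\ii t m(D)}v_\lambda(t)=\int\ee^{\ii t\tau}\,\ee^{\ii t m(D)}\widehat{v_\lambda}(\tau)\d\tau$, where $\widehat{v_\lambda}(\tau)$ denotes the inverse spatial Fourier transform of $\widetilde{v_\lambda}(\cdot,\tau)$. As multiplication by $\ee^{\ii t\tau}$ preserves the $L^q_tL^r_x$ norm, Minkowski's inequality and \eqref{Strest1d} give
\begin{equation*}
\norm{u_\lambda}_{L^q_tL^r_x}\le\int\norm{\ee^{\ii t m(D)}\widehat{v_\lambda}(\tau)}_{L^q_tL^r_x}\d\tau\lesssim\int\norm{\widehat{v_\lambda}(\tau)}_{L^2_x}\d\tau,
\end{equation*}
and Cauchy--Schwarz with $\int\angles{\tau}^{-2b}\d\tau<\infty$ (finite since $b>1/2$), followed by Fubini's theorem and the substitution $\tau\mapsto\tau+m(\xi)$ in the $\tau$-variable, bounds the last integral by $\norm{\angles{\tau-m(\xi)}^{b}\widetilde{u_\lambda}}_{L^2_{\tau,\xi}}$. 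Finally, since $s=0$ here and $\lambda\gg1$, the weight $|\xi|^{-\beta}\angles{\xi}^{\beta}$ is comparable to $1$ on $\supp\rho_\lambda$, so $\norm{\angles{\tau-m(\xi)}^{b}\widetilde{u_\lambda}}_{L^2_{\tau,\xi}}\sim\norm{u_\lambda}_{X_\beta^{0,b}}$, which finishes the proof. The hard part lies entirely upstream, in the $\lambda$-uniform $|t|^{-n/2}$ decay of Lemma~\ref{lm-dispest}; granting that rate—which matches the one for the free Schr\"odinger group—the $TT^{*}$ and transference steps above are completely routine.
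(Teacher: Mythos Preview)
Your proof is correct and follows precisely the route the paper indicates: the paper does not spell out a proof of Lemma~\ref{lm-LocStr} at all, merely stating that it follows by ``combining the aforementioned dispersive estimate with the standard $TT^{*}$ argument'' and invoking ``the transference principle'' for \eqref{Str-transfer}. Your write-up is exactly that standard $TT^{*}$/Hardy--Littlewood--Sobolev argument together with the usual transference computation, so there is nothing to contrast.
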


\vspace{3mm}

The rest of the paper is organized as follows. In the next section we prove some nonlinear estimates and then give the proof for Lemma \ref{lm-bilest}.
The last section is dedicated to investigating the existence and properties of standing waves, as well as the global boundedness of solutions in the energy space.

	\section{Nonlinear estimates and Proof of Lemma \ref{lm-bilest}}\label{sec-3}

	\subsection{Nonlinear estimates}
	\begin{lemma}
		\label{L3lemma}
		Let $1/2<b<1$ and  $0 < T < 1$. Assume that $\lambda_{\text{min}}$,  $\lambda_{\text{med}}$ and  $\lambda_{\text{max}}$
		denote the minimum, median and maximum of $\lambda_1$,  $\lambda_2$ and  $\lambda_3$, respectively.
		Then
		\begin{equation}\label{Trest}
			\norm{ P_{\lambda_4} \left(P_{\lambda_1}  u_1 P_{\lambda_2} u_2 P_{\lambda_3} u_3\right) }_{L_T^2 L^2_x}  
			\lesssim    B(\lambda)\ \prod_{j=1}^3
			\norm{ P_{\lambda_j} u_j }_{ X_0^{0, b}  },
		\end{equation}
		where 
		\begin{equation}
			\label{B1}
			B(\lambda) \sim [ \min( \lambda_{\min} \lambda_{\text{med}} , \lambda_{\min} \lambda_4)]^\frac n2.
		\end{equation}
		Moreover,  if $\lambda_{\text{med}} \gg 1 $ we can take 
		\begin{equation}
			\label{B2}
			B(\lambda) \sim 
			\begin{cases}
				1 \qquad  & \text{if} \quad   n=1,
				\\
				\min \left( \lambda_{\min}^\frac n2 \lambda_{\text{med}}^{\frac n 2-1} , \lambda_{\min} ^{\frac n2-1}\lambda_4^\frac n2 \right)    \qquad & \text{if} \quad   n\ge 2.
			\end{cases}
		\end{equation}
		
	\end{lemma}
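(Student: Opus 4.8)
## Proof proposal for Lemma \ref{L3lemma}

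The plan is to reduce the trilinear $L^2_{t,x}$ estimate to a product of bilinear $L^4_{t,x}$ estimates and crude Hölder--Bernstein estimates, dispatching to the localized Strichartz bound of Lemma \ref{lm-LocStr} whenever a frequency is $\gg 1$. First I would observe that we may assume by symmetry an ordering, say $\lambda_1 \le \lambda_2 \le \lambda_3$, so that $\lambda_{\min} = \lambda_1$, $\lambda_{\text{med}} = \lambda_2$, $\lambda_{\max} = \lambda_3$; the output frequency $\lambda_4$ must then satisfy $\lambda_4 \lesssim \lambda_3$ and, when $\lambda_4 \ll \lambda_3$, one needs $\lambda_2 \sim \lambda_3$ (otherwise the convolution support forces the product's frequency to be comparable to $\lambda_3$). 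The two regimes to keep track of correspond to the two quantities inside the minimum in \eqref{B1}: either $\lambda_4 \sim \lambda_3$ and the gain is $(\lambda_{\min}\lambda_{\text{med}})^{n/2} = (\lambda_1\lambda_2)^{n/2}$, or $\lambda_4 \ll \lambda_3$ (forcing $\lambda_2 \sim \lambda_3$) and the gain is $(\lambda_{\min}\lambda_4)^{n/2} = (\lambda_1\lambda_4)^{n/2}$.

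For the general bound \eqref{B1}, the workhorse is: put the lowest-frequency factor $P_{\lambda_1}u_1$ in $L^\infty_{t,x}$ via Bernstein, $\|P_{\lambda_1}u_1\|_{L^\infty_{t,x}} \lesssim \lambda_1^{n/2}\|P_{\lambda_1}u_1\|_{L^\infty_t L^2_x} \lesssim \lambda_1^{n/2}\|P_{\lambda_1}u_1\|_{X^{0,b}_0}$ using $b>1/2$; pair the remaining two factors in $L^2_{t,x}$. To produce the second factor $\lambda_{\text{med}}^{n/2}$ or $\lambda_4^{n/2}$, I would insert an $L^1_x$ bound on a product of two $L^2_x$ functions and then use Bernstein on the projection to the relevant output frequency: concretely, $\|P_{\lambda_4}(fg)\|_{L^2_x} \lesssim \lambda_4^{n/2}\|fg\|_{L^1_x} \le \lambda_4^{n/2}\|f\|_{L^2_x}\|g\|_{L^2_x}$, or alternatively (when $\lambda_2 \le \lambda_4$) Bernstein down on the lower of the two input frequencies, $\|P_{\lambda_2}u_2 \cdot P_{\lambda_3}u_3\|_{L^1_x}$-type manipulations giving $\lambda_2^{n/2}$. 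Combining with the $T^{1/2}$ from Hölder in time and absorbing $T^{1/2}\lesssim 1$ yields \eqref{B1}.

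For the refined bound \eqref{B2}, valid when $\lambda_{\text{med}} = \lambda_2 \gg 1$ (so also $\lambda_3 \gg 1$), I would use the bilinear/Strichartz estimate \eqref{Str-transfer} of Lemma \ref{lm-LocStr} with the admissible pair appropriate to dimension $n$. In $n=1$ one takes $(q,r)=(6,6)$ (or $(4,\infty^-)$-type substitutes), writes the triple product as $L^\infty_{t,x}$ (lowest frequency, via Bernstein — but here with $\lambda_1$ possibly $\lesssim 1$ we keep just $\lambda_1^{1/2}$, which is $\lesssim 1$ is false, so rather) — more carefully: estimate two factors in $L^4_{t,x}$ using $\|P_\lambda u\|_{L^4_t L^\infty_x}\lesssim \|P_\lambda u\|_{X^{0,b}_\beta}$ for $\lambda\gg1$ and the third in $L^2_{t,x}$, so that no positive power of any frequency survives, giving $B(\lambda)\sim 1$. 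For $n\ge 2$ one uses $(q,r)=(4, 2n/(n-1))$, which is Schrödinger admissible; Hölder in space then leaves a deficit that must be made up by Bernstein on $\lambda_1$ (costing $\lambda_1^{n/2-1}$ or $\lambda_1^{n/2}$) and on $\lambda_2$ or $\lambda_4$ (costing $\lambda_{\text{med}}^{n/2-1}$ or $\lambda_4^{n/2}$ respectively), producing exactly the two alternatives in \eqref{B2}.

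The main obstacle is the bookkeeping of which frequencies genuinely cost a Bernstein factor and which can be routed through a Strichartz norm at no cost: the two alternatives in each of \eqref{B1} and \eqref{B2} come from the two different ways of deciding whether to Bernstein-project onto the output frequency $\lambda_4$ or onto the median input frequency $\lambda_{\text{med}}$, and one must verify that in the regime $\lambda_4 \ll \lambda_3$ the output-frequency route is available (it is, because then $\lambda_2\sim\lambda_3$ and $\lambda_4$ can be the smallest scale). A secondary subtlety is that the Strichartz estimate \eqref{Strest1d} is only proved for single-frequency pieces with $\lambda\gg1$, so whenever a relevant frequency is $\lesssim 1$ one must fall back to the crude Hölder--Bernstein argument — but then that frequency is $\lesssim 1$ and contributes only a harmless $O(1)$ factor, so the stated $B(\lambda)$ (which always contains nonnegative powers of the frequencies) still dominates. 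Once these cases are organized, each individual estimate is a one-line application of Hölder, Bernstein, and Lemma \ref{lm-LocStr}.
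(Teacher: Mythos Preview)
Your proposal is correct and follows essentially the same approach as the paper: order the input frequencies by symmetry, obtain \eqref{B1} via H\"older--Bernstein (placing the one or two lowest-frequency factors in $L^\infty_x$ and, for the $\lambda_4^{n/2}$ alternative, using Bernstein on the output projection), and obtain \eqref{B2} via Lemma \ref{lm-LocStr} with the admissible pair $(4,\infty)$ in $n=1$ and $(4,\tfrac{2n}{n-1})$ in $n\ge 2$. One small slip: in the $n=1$ case the third factor should go into $L^\infty_T L^2_x$ (controlled by $X^{0,b}_0$ since $b>1/2$), not $L^2_{t,x}$, for the H\"older exponents to match.
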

	
	\begin{proof}
		By symmetry, we may assume 
		$ \lambda_1 \le \lambda_2 \le \lambda_3$.

		First, we prove \eqref{Trest}  with \eqref{B1}.
		If $\lambda_1 \le \lambda_4$, by the H\"{o}lder   and Sobolev inequalities,
		\begin{align*}
			\norm{ P_{\lambda_4} \left(P_{\lambda_1}  u_1 P_{\lambda_2} u_2  P_{\lambda_3} u_3 \right)}_{L_T^2L^2_x}  &\le T^{\frac 12} 
			\norm{P_{\lambda_1}  u_1}_{ L_T^\infty L_x^\infty  }  \norm{P_{\lambda_2} u_2}_{ L_T^\infty L_x^\infty  } 
			\norm{ P_{\lambda_3} u_3 }_{ L_T^\infty L_x^2  }
			\\
			& \lesssim T^{\frac 12} ( \lambda_1  \lambda_2)^\frac n2
			\prod_{j=1}^3
			\norm{ P_{\lambda_j} u_j }_{ X_0^{0, b}  }.
		\end{align*}
		On the other hand, if $\lambda_4 \le \lambda_1$, then
		\begin{align*}
			\norm{ P_{\lambda_4} \left(P_{\lambda_1}  u_1 P_{\lambda_2} u_2  P_{\lambda_3} u_3 \right) }_{L_T^2L^2_x} &\lesssim T^{\frac 12}  \lambda_4^\frac n2 \norm{ P_{\lambda_1}  u_1 P_{\lambda_2} u_2  P_{\lambda_3} u_3  }_{L_T^\infty L^1_x}
			\\
			& \lesssim  T^{\frac 12}  (\lambda_1\lambda_4)^\frac n2 \norm{P_{\lambda_1}  u_1}_{ L_T^\infty L_x^\infty  } 
			\norm{P_{\lambda_2} u_2 }_{ L_T^\infty L_x^2 } 
			\norm{ P_{\lambda_3} u_3 }_{ L_T^\infty L_x^2  }
			\\
			& \lesssim T^{\frac 12} ( \lambda_1 \lambda_4)^\frac n2
			\prod_{j=1}^3
			\norm{ P_{\lambda_j} u_j }_{ X_0^{0, b}  }.
		\end{align*}

		Next, we prove \eqref{Trest}  with \eqref{B2} in the case $\lambda_2 \gg 1$.
		
		\textbf{ Case $n=1:$}
		By the
		H\"{o}lder inequality,  Bernstein inequality and then Lemma \ref{lm-LocStr}, we get 
		\begin{align*}
			\norm{ P_{\lambda_4} \left(P_{\lambda_1}  u_1 P_{\lambda_2} u_2  P_{\lambda_3} u_3 \right) }_{L_T^2L^2_x} &\le    \norm{ P_{\lambda_1}  u_1}_{ L^\infty_T L^{2}_x  }
			\norm{P_{\lambda_2}  u_2}_{ L_T^4 L_x^{ \infty } } \norm{P_{\lambda_3}  u_3}_{ L_T^4 L_x^{ \infty } }
			\\
			&\lesssim   \prod_{j=1}^3
			\norm{ P_{\lambda_j} u_j }_{ X_0^{0, b}  }.
		\end{align*}
		
		\textbf{ Case $n\ge 2:$}
		If
		$\lambda_1 \le \lambda_4$, we apply the H\"{o}lder inequality,  Bernstein inequality, and finally Lemma \ref{lm-LocStr}, to obtain 
		\begin{align*}
			\norm{ P_{\lambda_4} \left(P_{\lambda_1}  u_1 P_{\lambda_2} u_2  P_{\lambda_3} u_3 \right) }_{L_T^2L^2_x} &\le    \norm{ P_{\lambda_1}  u_1}_{ L^\infty_T L^{\infty}_x  }
			\norm{P_{\lambda_2}  u_2}_{ L_T^4 L_x^{2n} }  \norm{P_{\lambda_3}  u_3}_{ L_T^4 L_x^{\frac {2n} {n-1}} } 
			\\
			&\lesssim   \lambda_1^\frac n2  \lambda_2^{\frac n2 -1} \norm{ P_{\lambda_1}  u_1}_{ L^\infty_T L^{2}_x  }
			\norm{P_{\lambda_2}  u_2}_{ L_T^4 L_x^{\frac {2n} {n-1}} }  \norm{P_{\lambda_3}  u_3}_{ L_T^4 L_x^{\frac {2n} {n-1}} } 
			\\
			&\lesssim  \lambda_1^\frac n2  \lambda_2^{\frac n2 -1}     \ \prod_{j=1}^3
			\norm{ P_{\lambda_j} u_j }_{ X_0^{0, b}  }.
		\end{align*}

		Similarly, if
		$\lambda_4 \le \lambda_1$,
		\begin{align*}
			\norm{ P_{\lambda_4} \left(P_{\lambda_1}  u_1 P_{\lambda_2} u_2  P_{\lambda_3} u_3 \right) }_{L_T^2L^2_x} &\lesssim \lambda_4^\frac n2 \norm{ P_{\lambda_1}  u_1 P_{\lambda_2} u_2  P_{\lambda_3} u_3  }_{L_T^2 L^1_x}
			\\
			& \lesssim  \lambda_4^\frac n2   \norm{P_{\lambda_1}  u_1}_{ L_T^\infty L_x^n }  
			\norm{P_{\lambda_2}  u_2}_{ L_T^4 L_x^{\frac {2n} {n-1}} }  \norm{P_{\lambda_3}  u_3}_{ L_T^4 L_x^{\frac {2n} {n-1}} } 
			\\
			& \lesssim \lambda_4^\frac n2  \lambda_1^{\frac n2 -1}  
			\prod_{j=1}^3
			\norm{ P_{\lambda_j} u_j }_{ X_0^{0, b}  }.
		\end{align*}

	\end{proof}

	\begin{lemma}
		\label{L3lemma}
		For $1\ll\mu \ll \lambda$, we have
		\begin{equation}\label{Trest1}
			\norm{ \ee^{\ii tm(D)}P_{\mu}   f \cdot    \ee^{\ii t tm(D)}P_{\lambda} g }_{L_{t,x}^2 ( \R^{n+1}) }  
			\lesssim   \mu^{\frac{n-1}{2}} \lambda^{-\frac12 }
			\norm{ f }_{ L_x^2(\R^n)    }  \norm{ g }_{ L_x^2(\R^n) }.
		\end{equation}
		
		Moreover, 
		\begin{equation}\label{TTrest}
			\norm{ P_{\mu} u  \cdot   P_{\lambda} v }_{L_{t,x}^2 ( \R^{n+1}) }  
			\lesssim   \mu^{\frac{n-1}{2}} \lambda^{-\frac12 }
			\norm{ P_{\mu} u}_{ X^{0, b}_0   }  \norm{ P_{\lambda} v }_{ X^{0, b}_0 }.
		\end{equation}
		
	\end{lemma}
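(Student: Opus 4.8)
The plan is to prove the bilinear bound \eqref{Trest1} by a Fourier-analytic argument of Bourgain--Klainerman--Machedon type, exploiting the frequency separation $\mu\ll\lambda$, and then to deduce \eqref{TTrest} by the transference principle.

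First I would reduce \eqref{Trest1} to a purely geometric estimate. Write $F=\ee^{\ii tm(D)}P_{\mu}f$ and $G=\ee^{\ii tm(D)}P_{\lambda}g$, and set $\Phi_\xi(\xi_1):=m(\xi_1)+m(\xi-\xi_1)$. By Plancherel in $(t,x)$, the space--time Fourier transform of $FG$ at a point $(\xi,\tau)$ equals, up to a harmless constant,
\[
\int_{\{\xi_1:\,\Phi_\xi(\xi_1)=\tau\}} \widehat f(\xi_1)\,\rho_\mu(|\xi_1|)\,\widehat g(\xi-\xi_1)\,\rho_\lambda(|\xi-\xi_1|)\;\frac{d\sigma(\xi_1)}{\abso{\nabla_{\xi_1}\Phi_\xi(\xi_1)}},
\]
$d\sigma$ denoting surface measure on the level set. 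Applying the Cauchy--Schwarz inequality on this level set with respect to $d\nu:=\abso{\nabla_{\xi_1}\Phi_\xi}^{-1}d\sigma$, then integrating in $\tau$ and using the coarea formula to return to Lebesgue measure in $\xi_1$, and finally integrating in $\xi$ and using Fubini (with $\xi_2=\xi-\xi_1$), one obtains
\[
\norm{FG}_{L^2_{t,x}}^2\;\lesssim\;\Big(\sup_{\xi,\tau}\nu(\Sigma_{\xi,\tau})\Big)\,\norm{f}_{L^2_x}^2\,\norm{g}_{L^2_x}^2,\qquad \Sigma_{\xi,\tau}:=\{\Phi_\xi=\tau\}\cap\{\abso{\xi_1}\sim\mu\}\cap\{\abso{\xi-\xi_1}\sim\lambda\}.
\]
Hence \eqref{Trest1} follows once we show $\nu(\Sigma_{\xi,\tau})\lesssim\mu^{n-1}\lambda^{-1}$, uniformly in $\xi,\tau$.

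Next I would prove this geometric bound, which is the heart of the matter. Since $\nabla m(\eta)=(2-2\beta\varepsilon\abso{\eta}^{-2\beta-2})\eta$ and $\mu\gg1$, on $\Sigma_{\xi,\tau}$ one has $\abso{\nabla m(\xi_1)}\sim\mu$ and $\abso{\nabla m(\xi-\xi_1)}\sim\lambda$, so from $\nabla_{\xi_1}\Phi_\xi=\nabla m(\xi_1)-\nabla m(\xi-\xi_1)$ and $\mu\ll\lambda$ the triangle inequality gives $\abso{\nabla_{\xi_1}\Phi_\xi}\sim\lambda$ there; consequently $\nu(\Sigma_{\xi,\tau})\lesssim\lambda^{-1}\sigma(\Sigma_{\xi,\tau})$, and it remains to prove $\sigma(\Sigma_{\xi,\tau})\lesssim\mu^{n-1}$. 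For this, note $\abso{\xi}\sim\lambda$ on $\Sigma_{\xi,\tau}$, and choose coordinates with $\xi$ parallel to $e_n$. The transverse components then satisfy $(\xi-\xi_1)'=-\xi_1'$ with $\abso{\xi_1'}\le 2\mu$, which yields $\abso{\nabla_{\xi_1'}\Phi_\xi}\lesssim\mu$, while $\abso{\partial_{\xi_1^{(n)}}\Phi_\xi}\sim\lambda$ because $(\xi-\xi_1)^{(n)}=\abso{\xi}-\xi_1^{(n)}\sim\lambda$. Therefore, on each vertical line the equation $\Phi_\xi(\xi_1)=\tau$ has only boundedly many roots, and $\Sigma_{\xi,\tau}$ is a finite union of graphs $\xi_1^{(n)}=h(\xi_1')$ with $\abso{\nabla h}=\abso{\nabla_{\xi_1'}\Phi_\xi}/\abso{\partial_{\xi_1^{(n)}}\Phi_\xi}\lesssim\mu/\lambda\le1$ over subsets of $\{\abso{\xi_1'}\le2\mu\}\subset\R^{n-1}$, whence $\sigma(\Sigma_{\xi,\tau})\lesssim\mu^{n-1}$. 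This graph representation is where the two hypotheses enter in an essential way: $\mu\gg1$ makes the singular term $\varepsilon\abso{\eta}^{-2\beta}$ a harmless perturbation of $\abso{\eta}^2$ on $\Sigma_{\xi,\tau}$, and $\mu\ll\lambda$ produces both the transversality $\abso{\nabla_{\xi_1}\Phi_\xi}\sim\lambda$ and the smallness of the slope $\abso{\nabla h}$; controlling the level set without the scale separation would be the main obstacle.

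Finally, \eqref{TTrest} follows from \eqref{Trest1} by the transference principle. Writing $P_\mu u(t)=\int_\R\ee^{\ii t\tau_1}\ee^{\ii tm(D)}g_{\tau_1}\,d\tau_1$ and $P_\lambda v(t)=\int_\R\ee^{\ii t\tau_2}\ee^{\ii tm(D)}h_{\tau_2}\,d\tau_2$ with $\widehat{g_{\tau_1}}(\xi)=\widetilde{P_\mu u}(\xi,\tau_1+m(\xi))$ and $\widehat{h_{\tau_2}}(\xi)=\widetilde{P_\lambda v}(\xi,\tau_2+m(\xi))$ (so that $g_{\tau_1}$, $h_{\tau_2}$ have the right dyadic Fourier supports and $\int_\R\angles{\tau}^{2b}\norm{g_\tau}_{L^2_x}^2\,d\tau\sim\norm{P_\mu u}_{X^{0,b}_0}^2$, likewise for $h$), Minkowski's inequality and \eqref{Trest1} give
\[
\norm{P_\mu u\cdot P_\lambda v}_{L^2_{t,x}}\;\lesssim\;\mu^{\frac{n-1}{2}}\lambda^{-\frac12}\Big(\int_\R\norm{g_{\tau_1}}_{L^2_x}\,d\tau_1\Big)\Big(\int_\R\norm{h_{\tau_2}}_{L^2_x}\,d\tau_2\Big).
\]
Since $b>1/2$, the Cauchy--Schwarz inequality in $\tau$ together with $\int_\R\angles{\tau}^{-2b}\,d\tau<\infty$ bounds the first factor by $\norm{P_\mu u}_{X^{0,b}_0}$ and the second by $\norm{P_\lambda v}_{X^{0,b}_0}$, which is exactly \eqref{TTrest}.
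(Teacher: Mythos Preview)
Your argument is correct and is essentially the same as the paper's: both exploit that on the interaction set the phase gradient (equivalently, the Jacobian of the paper's change of variables $(\xi_1,\eta)\mapsto(\xi+\eta,\,m(\xi)+m(\eta))$) has size $\sim\lambda$ from the scale separation $\mu\ll\lambda$, and that the low-frequency variable ranges over a region of $(n-1)$-dimensional size $\mu^{n-1}$; you package this via the coarea formula and a graph bound on the level set, whereas the paper does the explicit change of variables and Cauchy--Schwarz in $\bar\xi$. Your explicit transference argument for \eqref{TTrest} is standard and the paper simply takes it for granted.
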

	\begin{proof}
		By duality,
		it suffices to show 
		\begin{equation}
			\label{dual}
			\int_{\R^n}  \int_{\R^n} F_\lambda(\xi + \eta , m(\xi) + m(\eta ))
			\widehat{f_\mu } (\xi ) \widehat{g_\lambda} (\eta )  
			\dd\xi \dd\eta
			\lesssim   \mu^{\frac{n-1}{2}} \lambda^{-\frac12 } {{\| f \|}_{L^2(\R^{n})} } {{\| g
					\|}_{L^2(\R^{n})} }   {{\|  F\|}_{L^2(\R^{n+1})} }.
		\end{equation}
		
		By renaming components, we may assume that $|\xi_1 | \thicksim
		|\xi | \sim \mu$ and $|\eta_1 | \thicksim |\eta |\sim \lambda$. Write $\xi = (\xi_1
		, \overline \xi ).$ We now change variables by writing $\sigma= \xi +
		\eta,~ \alpha=m(\xi) +
		m(\eta)$ and $\dd\sigma\dd\alpha= J \dd\xi_1 \dd\eta$. A calculation
		then shows that 
		$$J = \left|2 (\xi_1 - \eta_1) - \beta\varepsilon \xi_1|\xi|^{-2\beta-2} +  \beta \varepsilon \eta_1|\eta|^{-2\beta-2} \right| \sim \lambda.$$
		Therefore, upon changing variables in the inner two integrals, we
		have
		\begin{equation*}
			\text{LHS} \ \eqref{dual}= \int_{\R^{n-1}}  \left[\int_{\R} \int_{\R^n}     F_\lambda (\sigma, \alpha)    \cdot   G_{\mu
				,\lambda} (  \sigma, \alpha, \overline{\xi})  \dd\sigma \dd\alpha  \right]\dd \overline\xi,
		\end{equation*}
		where
		\begin{equation*}
			G_{\mu, 
				\lambda} ( \sigma , \alpha, \overline{\xi}) = \frac{    \widehat{f_\mu } (\xi ) \widehat{g_\lambda} (\eta )    }{J}.
		\end{equation*}
		
		We apply the Cauchy-Schwarz inequality on the $\sigma, \alpha$ integration and change back to
		the original variables to obtain
		\begin{align*}
			\text{LHS} \ \eqref{dual} &= \| F_\lambda \|_{L^2(\R^{n+1})}  \cdot   \int_{\R^{n-1}}  \left[\int_{\R} \int_{\R^n}         \abso{\widehat{f_\mu } (\xi ) \widehat{g_\lambda} (\eta )  }^2  J^{-2} \dd\sigma \dd\alpha \right]^\frac12 \dd \overline{\xi}
			\\
			& =  \| F_\lambda \|_{L^2(\R^{n+1})}  \cdot   \mu^{\frac{n-1}2} \left[   \int_{\R^{n-1}}  \int_{\R} \int_{\R^n}       \abso{ \widehat{f_\mu } (\xi ) \widehat{g_\lambda} (\eta )  }^2  J^{-2} \dd\sigma \dd\alpha  \dd \overline{\xi} \right]^\frac12 
			\\
			& =  \| F_\lambda \|_{L^2(\R^{n+1})}  \cdot   \mu^{\frac{n-1}2} \left[   \int_{\R^{n}} \int_{\R^n}       \abso{ \widehat{f_\mu } (\xi ) \widehat{g_\lambda} (\eta )  }^2  \lambda^{-1} \dd\xi \dd\eta  \right]^\frac12 
			\\
			& \le  \mu^{\frac{n-1}2} \lambda^{-\frac12 } {{\| f \|}_{L^2(\R^{n})} } {{\| g
					\|}_{L^2(\R^{n})} }   {{\|  F\|}_{L^2(\R^{n+1})} } .
		\end{align*}

	\end{proof}

\begin{lemma}
\label{LL3lemma}
For $1\ll\lambda_1 \ll \lambda_2$, we have
\begin{equation}\label{TT3rest}
\norm{ P_{\lambda_4} \left(P_{\lambda_1}  u_1 P_{\lambda_2} u_2 P_{\lambda_3} u_3\right) }_{ X_0^{0, \frac14}  }  
\lesssim  B(\lambda) \ \prod_{j=1}^3
  \norm{ P_{\lambda_j} u_j }_{ X_0^{0, b}  },
  \end{equation}
where 
\begin{equation}
\label{B2'}
B(\lambda) =  \min \left(\lambda_1^\frac {n-1}2  \lambda_2^\frac {n-2}2, \,  \lambda_1^\frac {n-1}2  \lambda_2^{-\frac 12}   \lambda_4^{\frac n2} \right) .
\end{equation}
\end{lemma}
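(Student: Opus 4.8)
I would derive Lemma \ref{LL3lemma} from the bilinear estimate \eqref{TTrest}, the trilinear $L^2_{t,x}$ bound \eqref{Trest}, the Strichartz bound \eqref{Str-transfer}, and Bernstein's inequality, by dualising the $X_0^{0,1/4}$-norm and decomposing in modulation. Since $X_0^{0,-1/4}$ is dual to $X_0^{0,1/4}$, it suffices to bound
\[
\left| \int_{\R^{n+1}} P_{\lambda_1}u_1\cdot P_{\lambda_2}u_2\cdot P_{\lambda_3}u_3\cdot\overline{P_{\lambda_4}\phi}\,dx\,dt \right| \lesssim B(\lambda)\,\|\phi\|_{X_0^{0,-1/4}}\prod_{j=1}^3\|P_{\lambda_j}u_j\|_{X_0^{0,b}}.
\]
The frequency condition $\xi_1+\xi_2+\xi_3=\xi_4$ with $|\xi_j|\sim\lambda_j$ forces the largest of $\lambda_1,\dots,\lambda_4$ to be comparable to the sum of the others; together with $\lambda_1\ll\lambda_2$ this leaves $\lambda_2$ as a low or medium frequency and makes the (essentially unique) largest one, $\lambda_{\max}\in\{\lambda_3,\lambda_4\}$, either comparable to $\lambda_2$ or $\gg\lambda_2$, the two alternatives in $B(\lambda)$ corresponding to $\lambda_4\sim\lambda_2$ and to $\lambda_4\ll\lambda_2$ (in which case $\lambda_3\sim\lambda_2$). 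In every sub-configuration the plan is to keep the separated pair $(P_{\lambda_1}u_1,P_{\lambda_2}u_2)$ together, so that \eqref{TTrest} supplies the factor $\lambda_1^{(n-1)/2}\lambda_2^{-1/2}$, and to recover the remaining power of $\lambda_2$ (or of $\lambda_4$) from the third factor and from the modulation structure.

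To that end I would split each of $P_{\lambda_1}u_1,P_{\lambda_2}u_2,P_{\lambda_3}u_3,P_{\lambda_4}\phi$ into dyadic modulation pieces $Q_L(\cdot)$ relative to $\tau-m(\xi)$. On the support of $Q_{L_1}u_1\,Q_{L_2}u_2\,Q_{L_3}u_3\,\overline{Q_{L_4}\phi}$ one has the resonance identity
\[
\sigma_4=\sigma_1+\sigma_2+\sigma_3+\Omega,\qquad \Omega:=m(\xi_1)+m(\xi_2)+m(\xi_3)-m(\xi_1+\xi_2+\xi_3),
\]
so the contribution vanishes unless the largest modulation $L_\ast:=\max(L_1,L_2,L_3,L_4)$ satisfies $L_\ast\gtrsim\langle\Omega\rangle$. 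If $L_\ast$ sits on an input $u_j$, I peel it off via $\|Q_{L_\ast}u_j\|_{L^2_{t,x}}\le L_\ast^{-b}\|u_j\|_{X_0^{0,b}}$ — using, when $j\in\{1,2\}$, the modulation-localised version of \eqref{TTrest}, whose change-of-variables proof is unchanged except that a localised factor now enters through $L_\ast^{1/2}\|Q_{L_\ast}(\cdot)\|_{L^2_{t,x}}$ — dispose of the last factor by Bernstein or \eqref{Str-transfer}, and observe that $b>1/2>1/4$ makes the resulting power $L_\ast^{1/4-b}$ summable over the range $L_\ast\gtrsim\langle\Omega\rangle$, the loss $\langle\Omega\rangle^{1/4-b}$ being absorbed into $B(\lambda)$ via $\langle\Omega\rangle\lesssim\lambda_{\max}\lambda_2$. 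If instead $L_\ast$ sits on $\phi$, then $L_\ast\sim\langle\Omega\rangle$ (or $L_\ast\sim L_j$, which reduces to the previous case), it is pinned, and I estimate the trilinear piece by \eqref{TTrest}/\eqref{Trest} and Bernstein, times $\|Q_{L_\ast}\phi\|_{L^2_{t,x}}\lesssim L_\ast^{1/4}\|\phi\|_{X_0^{0,-1/4}}$; the remaining purely geometric resonance — all inputs at low modulation and $\phi$ at modulation $\sim\langle\Omega\rangle$ — calls for a gain of $\langle\Omega\rangle^{-1/4}$ obtained by restricting the product $u_1u_2u_3$ to $\{|\Omega|\sim N\}$ through a change of variables in the spirit of the proof of \eqref{TTrest}, using that $\nabla_{\xi_3}\Omega=\nabla m(\xi_3)-\nabla m(\xi_4)$ is non-degenerate. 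Finally, the genuinely non-resonant remainder, where $\langle\Omega\rangle\lesssim1$ and all modulations are $\lesssim1$, is immediate from \eqref{TTrest} and Bernstein, the three modulation sums collapsing since $\sum_{L\lesssim1}Q_L(\cdot)$ is a projection.

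\textbf{Main obstacle.} The crux is the control of the resonance function $\Omega$. Because $m(r)=r^2+\varepsilon r^{-2\beta}$ is singular at $r=0$, $\Omega$ need not be $O(\lambda_{\max}^2)$: whenever $\lambda_4\ll1$ (possible when $\lambda_3\sim\lambda_2$) the term $m(\xi_4)$ contributes a size $\sim\lambda_4^{-2\beta}$, so $\langle\Omega\rangle$, hence $L_\ast$, may be as large as $\lambda_4^{-2\beta}$. This is self-correcting — a large $\langle\Omega\rangle$ forces an equally large $L_\ast$, so that $L_\ast^{1/4-b}\lesssim\lambda_4^{2\beta(b-1/4)}\ll1$ absorbs the loss — but verifying, across all the frequency sub-configurations, that the net powers of $\lambda_1,\lambda_2,\lambda_4$ never exceed $B(\lambda)$ while $\lambda_3$ drops out entirely is the delicate bookkeeping of the proof; so is the change-of-variables estimate needed in the purely geometric resonance regime, which is precisely where the extra $\lambda_2^{-1/2}$ (respectively the $\lambda_4^{n/2}$ in the other branch) beyond the bare bilinear gain of \eqref{TTrest} must come from. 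Everything else — the dyadic summations over modulation scales — is routine.
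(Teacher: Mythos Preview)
Your route is far more elaborate than the paper's, which uses no modulation decomposition and no resonance function at all. After dualising, the paper bounds the quadrilinear integral by two direct H\"older splittings combined with \eqref{TTrest}, Bernstein, and the Strichartz estimate \eqref{Str-transfer}. For the branch $\lambda_1^{(n-1)/2}\lambda_2^{-1/2}\lambda_4^{n/2}$: put $(u_1u_2)$ in $L^2_{t,x}$ via \eqref{TTrest}, put $u_3$ in $L^4_tL^2_x$, and put $u_4$ in $L^4_tL^\infty_x\subset\lambda_4^{n/2}L^4_tL^2_x$. For the branch $\lambda_1^{(n-1)/2}\lambda_2^{(n-2)/2}$: write $\int(u_1u_2)(u_3u_4)=\sum_\mu\int P_\mu(u_1u_2)\,P_\mu(u_3u_4)$, use Bernstein $P_\mu(u_1u_2)\in L^2_tL^{2n}_x\subset\mu^{(n-1)/2}L^2_{t,x}$ together with \eqref{TTrest}, estimate $P_\mu(u_3u_4)$ in $L^2_tL^{2n/(2n-1)}_x$ by H\"older with $u_3\in L^4_tL^{2n/(n-1)}_x$ (Schr\"odinger-admissible, so \eqref{Str-transfer} applies) and $u_4\in L^4_tL^2_x$, and sum $\sum_{\mu\lesssim\lambda_2}\mu^{(n-1)/2}\sim\lambda_2^{(n-1)/2}$. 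That is the entire proof.

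The reason your argument became heavy is the direction of your duality. You test against $\phi\in X_0^{0,-1/4}$, aiming literally at an $X_0^{0,1/4}$ bound on the triple product; the paper instead tests against $u_4\in X_0^{0,1/4}$ (so what is actually established---and what is used in the proof of Lemma~\ref{lm-bilest}---is the $X_0^{0,-1/4}$ bound; the exponent in the displayed statement should be read with the opposite sign). With $u_4\in X_0^{0,1/4}$ one has the elementary embedding $X_0^{0,1/4}\hookrightarrow L^4_tL^2_x$ (Sobolev $H^{1/4}_t\hookrightarrow L^4_t$ after untwisting the phase), and this single fact replaces all of your modulation case analysis. With $\phi\in X_0^{0,-1/4}$ no such embedding is available, which is why you were forced into the $Q_L$ decomposition and the resonance function $\Omega$; the ``change-of-variables estimate in the purely geometric resonance regime'' that you flag as the crux is precisely the step that the paper's choice of duality makes unnecessary. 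Your plan may be salvageable, but it is attempting a strictly harder statement than the one actually proved and used.
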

	 \begin{proof}
	By duality,
it suffices to show 
\begin{equation}\label{T3rest}
\Bigabs{ \int P_{\lambda_1}  u_1 P_{\lambda_2} u_2 P_{\lambda_3} u_3 P_{\lambda_4}  u_4 \, dx dt }
\lesssim    \lambda_1^\frac {n-1}2  \lambda_2^\frac {n-2}2\ \prod_{j=1}^3
  \norm{ P_{\lambda_j} u_j }_{ X_0^{0, b}  } \norm{P_{\lambda_4}  u_4}_{ X_0^{0, \frac14}  }  ,
  \end{equation}
  
   By H\"{o}lder, Bernstein inequality and Lemma \ref{L3lemma},
   \begin{align*}
\text{LHS} \ \eqref{T3rest}  &\lesssim \norm{ P_{\lambda_1}  u_1 P_{\lambda_2} u_2 }_{L_t^2L_x^{2}  } \norm{  P_{\lambda_3} u_3 }_{ L_t^4 L_x^{2  }}  \norm{  P_{\lambda_4} u_3 }_{ L_t^4 L_x^\infty }
\\
&\lesssim \lambda_1^\frac {n-1}2  \lambda_2^{-\frac 12}   \lambda_4^{\frac n2} \prod_{j=1}^3
  \norm{ P_{\lambda_j} u_j }_{ X_0^{0, b}  } \norm{P_{\lambda_4}  u_4}_{ X_0^{0, \frac14}  } 
  \end{align*}

 On the other hand,  by H\"{o}lder, Bernstein inequality, Lemma \ref{L3lemma} and Lemma \ref{lm-LocStr},
  \begin{align*}
\text{LHS} \ \eqref{T3rest}  &\lesssim \sum_{\mu}\norm{ P_\mu \left( P_{\lambda_1}  u_1 P_{\lambda_2} u_2 \right) }_{L_t^2L_x^{2n}  } \norm{ P_\mu  \left( P_{\lambda_3} u_3 P_{\lambda_4}  u_4 \right) }_{{L_t^2 L_x^{\frac {2n} {2n-1}}  }  }
\\
&\lesssim \sum_{\mu} \mu^\frac {n-1}2 \norm{ P_\mu \left( P_{\lambda_1}  u_1 P_{\lambda_2} u_2 \right) }_{L_t^2 L_x^{2} } \norm{  P_{\lambda_3} u_3 }_{ L_t^4 L_x^{\frac {2n} {n-1}}  }  \norm{  P_{\lambda_4} u_3 }_{ L_t^4 L_x^2 }
\\
&\lesssim \sum_{\mu} \mu^\frac {n-1}2   \lambda_1^\frac {n-1}2 \lambda_2^{-\frac12 } \prod_{j=1}^3
  \norm{ P_{\lambda_j} u_j }_{ X_0^{0, b}  } \norm{P_{\lambda_4}  u_4}_{ X_0^{0, \frac14}  } 
  \\
&\lesssim  \lambda_1^\frac {n-1}2  \lambda_2^\frac {n-2}2 \prod_{j=1}^3
  \norm{ P_{\lambda_j} u_j }_{ X_0^{0, b}  } \norm{P_{\lambda_4}  u_4}_{ X_0^{0, \frac14}  } 
  \end{align*}

\end{proof}

		\subsection{Proof of Lemma \ref{lm-bilest}}\label{sec-3}
	
	Now we are in a position to prove the trilinear estimate.
	We have  (see e.g. \cite[Lemma 2.11]{tao-book})
	\begin{align}
		\label{TFactor}
		\norm{u}_{X_{\beta, T}^{s, b-1}} &\le C
		T^{1-b}\norm{u}_{ L_T^2 H_\beta^{s}},
	\end{align}
	where $C$ is independent on $T$.
	So in view of \eqref{TFactor},   estimate \eqref{biest1} reduces to proving
	\begin{equation}\label{biest2}
		\norm{    u_1 u_2 u_3 }_{L_T^2 H_\beta^{s}}  
		\lesssim \prod_{j=1}^3
		\norm{u_j }_{ X_\beta^{s, b}  }.
	\end{equation}
	
	By duality \eqref{biest2} reduces further to
	\begin{equation}\label{duality-biest11}
		\left| \int_0^T \int_{\R^n}   |D|^{-\beta}   \angles{D}^{s+\beta}  \left(  \prod_{j=1}^3  |D|^{\beta}  \angles{D}^{-s-\beta}    u_j  \right)  u_4 \dd x \dd t \right| \lesssim  \prod_{j=1}^3
		\norm{u_j }_{ X_0^{0, b}  } 
		\norm{ u_4 }_{  L_T^2 L_x^2 }.
	\end{equation}
		Decomposing $u_j= \sum_{\lambda_j >0} P_{\lambda_j} u_j$, we have
	\begin{equation}\label{duality-biestdecomp-c}
		\begin{split}
			\text{LHS \eqref{duality-biest11} } \  \lesssim \sum_{\lambda_1, \lambda_2, \lambda_3, \lambda_4  } \left| \int_0^T \int_{\R^n}    |D|^{-\beta}   \angles{D}^{s+\beta} P_{\lambda_4} \left(  \prod_{j=1}^3  |D|^{\beta}  \angles{D}^{-s-\beta}    P_{\lambda_j} u_j  \right) P_{\lambda_4} u_4\dd x \dd t \right|.
		\end{split}
	\end{equation}
	Set \begin{align*}
		a_{\lambda_j}:&= \norm{P_{\lambda_j} u_j}_{  X_0^{0, b} }, 
		\qquad
		a_{\lambda_4}:=  \norm{P_{\lambda_4} u_4}_{L^2_{T,x}   }  \qquad (j=1, 2, 3).
	\end{align*}
	By the Cauchy-Schwarz inequality, Lemma \ref{L3lemma}  and Bernstein inequality,
	\begin{equation}\label{maindecomp-c}
		\begin{split}
			\text{LHS \eqref{duality-biestdecomp-c}} \  &\lesssim
			\sum_{\lambda_1, \lambda_2, \lambda_3, \lambda_4 }
			\norm{  |D|^{-\beta}  \angles{D}^{s+\beta} P_{\lambda_4} \left(  \prod_{j=1}^3  |D|^{\beta}  \angles{D}^{-s-\beta}    P_{\lambda_j} u_j    \right)}_{L_{T,x}^2} \norm{ P_{\lambda_4} u_4  }_{ L_{T,x}^2  }
			\\
			&\lesssim \underbrace{
				\sum_{
					\lambda_1, \lambda_2, \lambda_3, \lambda_4 	}   C(\lambda) 
				a_{\lambda_1} a_{\lambda_2} a_{\lambda_3} a_{\lambda_4}}_{:=I},
		\end{split}
	\end{equation}
	where 
	$$
	C(\lambda) = B(\lambda) \cdot  \lambda_4^{-\beta}  \angles{\lambda_4}^{s+\beta}  \prod_{j=1}^3 
	\lambda_j^{\beta}   \angles{\lambda_j}^{-s-\beta}.
	$$
		So it suffices to show that
	\begin{equation}
		\label{I}
		I\lesssim  \prod_{j=1}^4 \|(a_{\lambda_j})\|_{l^2_{\lambda_j}}.
	\end{equation}

	\vspace{2mm}
	By symmetry of our argument, we may assume 
	$ \lambda_1 \le \lambda_2 \le \lambda_3$. 
	
	If $\lambda_3 \lesssim 1$, we use \eqref{B1} to obtain
	$$
	C(\lambda) \lesssim  (\lambda_1\lambda_4)^\frac n2 \cdot \lambda_4^{-\beta} ( \lambda_1  \lambda_2 \lambda_3)^\beta .
	$$
	Then applying the Cauchy-Schwarz inequality in $\lambda_1, 
	\lambda_2, \lambda_3 $ and $\lambda_4$, to
	obtain
	\begin{align*}
		I &\lesssim 
		\sum_{    \lambda_1,  \lambda_2, \lambda_3, \lambda_4 \lesssim 1 }   \lambda_4^{\frac n2-\beta} ( \lambda_1  \lambda_2 \lambda_3)^\beta
		a_{\lambda_1} a_{\lambda_2} a_{\lambda_3} a_{\lambda_4} \lesssim\prod_{j=1}^4 \|(a_{\lambda_j})\|_{l^2_{\lambda_j}},
	\end{align*}
	provided   $\beta<n/2$.

	So, we may assume $\lambda_3 \gg 1$.  We consider three cases:
	\begin{enumerate}[(i)]
		\item $ \lambda_1 \sim \lambda_2 \sim \lambda_3$,
		
		\item $ \lambda_1 \le \lambda_2 \ll \lambda_3$,
		\item $ \lambda_1 \ll \lambda_2 \sim \lambda_3$.
	\end{enumerate}

	\subsection*{ \underline{(i): $ \lambda_1 \sim \lambda_2 \sim \lambda_3$}}
	If $\lambda_4 \lesssim 1$, we use \eqref{B2} to obtain
$$
C(\lambda)  \sim 
\begin{cases}
 \lambda_4^{\frac n2-\beta}
 \lambda_3^{ -3s }   \qquad  & \text{if} \quad   n=1,
 \\
\lambda_4^{\frac n2-\beta}
 \lambda_3^{ \frac n2  -1-3s }     \qquad & \text{if} \quad   n\ge 2.
 \end{cases}.
$$ Therefore,
\begin{align*}
I &\lesssim 
 \sum_{ \lambda_4 \lesssim 1, \   \lambda_1\sim  \lambda_2\sim\lambda_3\gg 1 }  C(\lambda)  
a_{\lambda_1} a_{\lambda_2} a_{\lambda_3} a_{\lambda_4} \lesssim\prod_{j=1}^4 \|(a_{\lambda_j})\|_{l^2_{\lambda_j}}
\end{align*}
provided $0<\beta<n/2$,  $s\ge 0$ when $n=1$ and $s\ge (n-2)/6 $ when $n\ge 2$. 

 If $\lambda_4 \gg 1$, we use \eqref{B2} to obtain
$$
C(\lambda)  \sim 
\begin{cases}
 \lambda_4^{s}
 \lambda_3^{  -3s }  \qquad  & \text{if} \quad   n=1,
 \\
 \lambda_4^{\frac n2 +s}
 \lambda_3^{ \frac n2 -1  -3s }     \qquad & \text{if} \quad   n\ge 2.
 \end{cases}
$$
 and hence
\begin{align*}
I  &\lesssim 
 \sum_{ \lambda_4 \gg 1, \   \lambda_1\sim  \lambda_2\sim\lambda_3\gg 1 } C(\lambda)  
a_{\lambda_1} a_{\lambda_2} a_{\lambda_3} a_{\lambda_4} \lesssim\prod_{j=1}^4 \|(a_{\lambda_j})\|_{l^2_{\lambda_j}}
\end{align*}
provided $s\ge (n-1)/2 $.

	\subsection*{ \underline{(ii): $\lambda_1 \le \lambda_2 \ll \lambda_3$ }} 
 
 This implies $\lambda_3 \sim \lambda_4$.

\begin{itemize}

\item
If $\lambda_1 \le \lambda_2 \lesssim 1$, we use \eqref{B1} to obtain
$
C(\lambda) \sim  \lambda_1^{\frac n2+\beta}  \lambda_2^{\frac n2+\beta},
$
and hence \eqref{I} holds for all $s$.

\item Assume $\lambda_1 \lesssim 1$, $\lambda_2 \gg 1$. If $n\ge 2$, we use \eqref{B2}  to obtain
$$
C(\lambda)  \sim 
 \lambda_1^{\frac n2+\beta}    \lambda_2^{ \frac n2 -1-s } 
$$
which can be used to  get \eqref{I} for $s>(n-2)/2$.
 If $n= 1$, we use \eqref{B2'}  
$$
C(\lambda)  \sim 
 \lambda_1^{\beta}
 \lambda_2^{  -1/2 } 
$$
which can be used to  get \eqref{I} for all $s\in \R$ and $\beta>0$.

\item If $ 1 \ll \lambda_1 \sim \lambda_2$, we use \eqref{B2} to get
$$
C(\lambda)  \sim 
\begin{cases}
 \lambda_2^{  -2s }  \qquad  & \text{if} \quad   n=1,
 \\
  \lambda_2^{n-1-2s}    \qquad & \text{if} \quad   n\ge 2
 \end{cases}
$$
from which \eqref{I} follows 
provided $s\ge (n-1)/2$.

\item If $ 1 \ll \lambda_1 \ll \lambda_2$, we use \eqref{B2'}  to get
$$
C(\lambda)  \sim  \lambda_1^{\frac{n-1} 2-s} 
 \lambda_2^{\frac{n-2} 2-s}   
$$
from which \eqref{I} follows 
provided $s\ge (n-1)/2$.

		\item If $ 1 \ll \lambda_1 \sim \lambda_2$, we use \eqref{B2} to get
		$$
		C(\lambda)  \sim 
		\begin{cases}
			\lambda_2^{  -2s }  \qquad  & \text{if} \quad   n=1,
			\\
			\lambda_2^{n-1-2s}    \qquad & \text{if} \quad   n\ge 2
		\end{cases}
		$$
		from which \eqref{I} follows 
		provided $s\ge (n-1)/2$.

		\item If $ 1 \ll \lambda_1 \ll \lambda_2$, we use \eqref{B2'}  to get
		$$
		C(\lambda)  \sim  \lambda_1^{\frac{n-1} 2-s} 
		\lambda_2^{\frac{n-2} 2-s}   
		$$
		from which \eqref{I} follows 
		provided $s\ge (n-1)/2$.
		
	\end{itemize}

	\subsection*{ \underline{(iii): $\lambda_1 \ll \lambda_2 \sim \lambda_3$ }}

\begin{itemize}

\item If $\lambda_4,  \lambda_1  \lesssim 1$, we use \eqref{B1} to get $
C(\lambda) \sim \lambda_4^{\frac n2-\beta}   \lambda_1^{\frac n2+\beta} \lambda_2^{-2s } 
$, and therefore \eqref{I} follows for $0<\beta <n/2$,  $s\ge 0$.  

\item If $\lambda_4 \lesssim 1$ and $\lambda_1 \gg 1$, we use \eqref{B2'} to get
$$
C(\lambda)  \sim 
 \lambda_4^{\frac n2-\beta}  \lambda_1 ^{ \frac {n-1}2-s}  \lambda_2^{-\frac12-2s }    
$$
which can be used to  get \eqref{I}  provided $0<\beta <n/2$ and $s\ge (n-1)/2$.

\item If $\lambda_4 \gg 1$ and $\lambda_1 \lesssim 1$,  we use \eqref{B2} to obtain
$$
C(\lambda)  \sim 
\begin{cases}
 \lambda_1^{\beta} \lambda_4^{  s } 
 \lambda_2^{  -2s }  \qquad  & \text{if} \quad   n=1,
 \\
   \lambda_4^{ \frac n2 -1+s }  \lambda_1^{\frac n2+\beta}  \lambda_2^{ -2s }     \qquad & \text{if} \quad   n\ge 2
 \end{cases}
$$
which can be used to  get \eqref{I}  provided   $s\ge 0$ for $n=1$ and $s\ge (n-2)/2$ for $n\ge 2$.

\item Finally, if
$\lambda_4, \lambda_1 \gg  1$, we use \eqref{B2'} to obtain
$$
C(\lambda)  \sim  \lambda_4 ^s \lambda_1^{\frac{n-1} 2-s} 
 \lambda_2^{\frac{n-2} 2-2s}   =(\lambda_4/ \lambda_2)^s \lambda_1^{\frac{n-1} 2-s} 
 \lambda_2^{\frac{n-2} 2-s}   
$$
from which \eqref{I} follows 
provided $s\ge (n-1)/2$.

\end{itemize}

	\section{Global well-posedness}\label{sec-4}
	In this section, we aim to identify the conditions under which the Cauchy problem \eqref{rnls} exhibits global well-posedness. Building upon the local well-posedness result in Theorem \ref{thmlwp}, the following outcome readily follows from the conservation law $\E$. Throughout this section, we need to assume that $\varepsilon=+1$ because the term $\scal{u, L_\beta u}_\lt$ cannot be controlled if $\varepsilon$ is negative.
		In the focusing case $\varsigma=+1$, the energy $\E$ may be sign-changing, so it is classical to estimate the last term of $\E$ associated to the nonlinearity of \eqref{rnls}. This is strongly related to the best constant of embedding of $X_\beta$ in $L^4(\rn)$.
	\subsection{Standing waves}
	To establish global conditions, we first derive the following embedding.
	\begin{lemma}\label{embeding-lemma}
		There exists a constant $\vr=\vr(p,\beta,n)$
		such that
		\begin{equation}\label{embed-1}
			\|u\|_{L^{p+2}(\rn)}
			\leq \vr
			\|u\|_\lt^{\frac{4+p(2-n)}{2(p+2)}-\kappa}
			\|\nabla u\|_\lt^{\frac{np}{2(p+2)}+\frac{\kappa\beta}{\beta+1}}
			\|L_{\beta/2}u\|_\lt^{\frac\kappa{\beta+1}}
		\end{equation}
		holds for every function $u\in X_\beta$, where $0\leq\kappa\leq\frac{4+p(2-n)}{2(p+2)}$, $0< p\leq 2^\ast$ and
		\[
		2^\ast=\begin{cases}
			\infty^-,&n\leq2,\\
			\frac{4}{n-2},&n>2.
		\end{cases}
		\]
		Particularly, the space $X_\beta$ is continuously embedded into $L^{p+2}(\rn)$. Furthermore, $\x$ is embedded compactly into $L^p_{\rm loc}(\rn)$ for any $p\geq1$.
	\end{lemma}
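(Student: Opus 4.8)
The plan is to derive \eqref{embed-1} from the homogeneous Sobolev embedding combined with a one–parameter Gagliardo--Nirenberg interpolation of the $\dot H^s$ norm carried out directly on the Fourier side, and then to read off the two embedding statements as corollaries.

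First, set $s:=\tfrac{np}{2(p+2)}$, so that $\tfrac1{p+2}=\tfrac12-\tfrac sn$; in the admissible range $0<p\le 2^\ast$ one has $0<s\le 1$, with $s=1$ attained only at the endpoint $p=2^\ast$ in dimension $n\ge3$, and $s<n/2$ in every case. The Sobolev inequality $\dot H^s(\rn)\hookrightarrow L^{p+2}(\rn)$ then gives a constant $\vr=\vr(p,n)$ with $\|u\|_{L^{p+2}}\le\vr\,\|D^s u\|_{L^2}$, where $D^s$ denotes the Fourier multiplier with symbol $|\xi|^s$.

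Next, given $0\le\kappa\le 1-s$, put $\theta_1=\tfrac{\kappa}{\beta+1}$, $\theta_3=s+\tfrac{\beta\kappa}{\beta+1}$ and $\theta_2=1-\theta_1-\theta_3=1-s-\kappa$. One checks $\theta_1,\theta_2,\theta_3\ge 0$, $\theta_1+\theta_2+\theta_3=1$, and — the key identity — $-\beta\theta_1+\theta_3=s$. Hence for every $\xi$,
\[
|\xi|^{2s}|\widehat u(\xi)|^2=\bigl(|\xi|^{-2\beta}|\widehat u(\xi)|^2\bigr)^{\theta_1}\bigl(|\widehat u(\xi)|^2\bigr)^{\theta_2}\bigl(|\xi|^{2}|\widehat u(\xi)|^2\bigr)^{\theta_3},
\]
and integrating in $\xi$ and applying Hölder's inequality with exponents $1/\theta_1,1/\theta_2,1/\theta_3$ (with the usual limiting convention when some $\theta_i=0$) yields
\[
\|D^s u\|_{L^2}^2\le\|D^{-\beta}u\|_{L^2}^{2\theta_1}\,\|u\|_{L^2}^{2\theta_2}\,\|\nabla u\|_{L^2}^{2\theta_3}.
\]
Taking square roots, combining with the previous step, and recalling $\|D^{-\beta}u\|_{L^2}=\|L_{\beta/2}u\|_{L^2}$, this is precisely \eqref{embed-1}: indeed $\theta_2=\tfrac{4+p(2-n)}{2(p+2)}-\kappa$, $\theta_3=\tfrac{np}{2(p+2)}+\tfrac{\kappa\beta}{\beta+1}$, $\theta_1=\tfrac{\kappa}{\beta+1}$, and the constraints $\theta_1,\theta_2\ge 0$ are exactly $0\le\kappa\le\tfrac{4+p(2-n)}{2(p+2)}$. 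These Fourier manipulations are justified first for Schwartz $u$ and then extended to all $u\in X_\beta$ using the inclusion $X_\beta\subset H^1$ from Lemma \ref{density-space}.

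Finally, the continuous embedding $X_\beta\hookrightarrow L^{p+2}(\rn)$ is immediate from \eqref{embed-1}, since each of the three factors on the right is $\le\|u\|_{X_\beta}$ and their exponents sum to $\theta_1+\theta_2+\theta_3=1$; and the compact embedding $X_\beta\hookrightarrow\hookrightarrow L^p_{\rm loc}(\rn)$ follows from the continuous inclusion $X_\beta\hookrightarrow H^1(\rn)$ together with the Rellich--Kondrachov theorem on bounded subdomains, supplemented, for exponents above the Rellich range, by the elementary bound $\|v\|_{L^p(\Omega)}\le|\Omega|^{1/p-1/q}\|v\|_{L^q(\Omega)}$ on any bounded $\Omega$ with $p\le q$ in the Sobolev range of $X_\beta$. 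The whole argument is standard Gagliardo--Nirenberg interpolation; the only points needing care are the bookkeeping that the sign constraints $\theta_i\ge0$ reproduce exactly the stated interval for $\kappa$, and the Sobolev endpoint $p=2^\ast$ for $n\ge3$, where necessarily $\kappa=0$ and \eqref{embed-1} collapses to the sharp Sobolev inequality $\|u\|_{L^{2n/(n-2)}}\le\vr\,\|\nabla u\|_{L^2}$.
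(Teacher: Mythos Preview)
Your proof is correct and follows essentially the same strategy as the paper's: the paper combines the Gagliardo--Nirenberg inequality \eqref{gag-l} with the interpolation $\|u\|_{L^2}\le\|D^{-\beta}u\|_{L^2}^{1/(\beta+1)}\|\nabla u\|_{L^2}^{\beta/(\beta+1)}$ (their \eqref{gag-2} applied to $D^{-\beta}u$), which is exactly your three-term H\"older step on the Fourier side carried out in two stages. For the compact embedding the paper gives a hands-on Fourier argument for $L^1_{\rm loc}$, whereas your appeal to Rellich--Kondrachov via the trivial inclusion $X_\beta\subset H^1$ (this follows from the norm definition, not from Lemma~\ref{density-space}, which concerns density) is cleaner and equally sufficient.
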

	\begin{proof}
		Inequality \eqref{embed-1} is obtained from a simple combination of the following Gagliardo-Nirenberg inequalities (see for example \cite{amin-zamp}):
		\begin{equation}\label{gag-l}
			\|u\|_{L^{p+2}(\rn)}^{p+2}\leq
			\varrho_0
			\|u\|_\lt^{2+\frac{p}{2}(2-n)}
			\|\nabla u\|_\lt^{\frac{np}{2}},
		\end{equation}
		\blue{\begin{equation}\label{gag-2}
			\|(-\Delta)^{\frac{\beta}{2}}u\|_\lt
			\leq\|u\|_\lt^{\frac{1}{\beta+1}}\|(-\Delta)^{\frac{\beta+1}{2}}u\|_\lt^{\frac{\beta}{\beta+1}},
			\qquad\forall\beta\geq0,
		\end{equation}}
		where 	\begin{equation}\label{without-r}
			\varrho_0=\frac{2(p+2)}{np}\left(\frac{2(p+2)}{np}-1\right)^{\frac{np}{4}-1}
			\|\psi\|_\lt^{-p}
		\end{equation}
		and $\psi$ is the unique  ground state of \eqref{zero-beta-ground}.
			Then we deduce the continuous embedding $\x\hookrightarrow L^{p+2}(\rn)$. The compact embedding $L^q_{\rm loc}(\rn)$ for $q\in(2,2^\ast)$ is a direct consequence of the above continuous embedding by standard argument.
		
		To finish the proof of this lemma, it suffices to show that $\x$ is embedded compactly into $L^1_{\rm loc}(\rn)$. Let $\{u_k\}$ be a bounded sequence in $\x$. If we define $v_k=\chi u_k$, where $\chi\in C^\infty_0(\rn)$ such that $\chi\equiv1$ on $B_r(0)$ and 
		${\rm supp}(\chi)\subset B_{2r}(0)$, then $\{v_k\}$ is also a bounded sequence in $H^1(\rn)$. We can also assume that there is $v\in\lt$ such that  $v_k\to v$ in $\lt$ and ${\rm supp}(v)\subset B_{2r}(0)$. Now, we have for $R>0$ that $$\|\hat{v}_k -\hat{v}\|_{L^2(\rn\setminus B_{2r}(0))}^2\lesssim(1+R^2)^{-1},$$ and since $\exp(\ii x\cdot\xi)\in L^2(B_{2r}(0))$, we obtain from the weak convergence in $L(B_{2r}(0))$ that $\hat{v}_k$ converges  a.e. to $\hat{v}$. On the other hand, since
		\[
		\|\hat{v}_k\|_{L^\infty(\rn)}\leq\|v_k\|_{L^1(B_{2r}(0))}\lesssim_r\|v_k\|_{L^2(B_{2r}(0))}\lesssim_r\|v_k\|_\x,
		\]
		thus $|\hat{v}_k-\hat{v}|^2$ is bounded uniformly, and thereby the dominated convergence theorem implies that $$\|\hat{v}_k-\hat{v}\|_{L^2(B_R(0))}=o(1).$$ The above estimates combined with the Plancherel identity show that $v_k-v\to0$ in $L^2(\rn)$ and, hence $\|v_n-v\|_{L^1(B_{r}(0))}\to0$  by the H\"{o}lder inequality.
	\end{proof}	 
	
	Define the homogeneous space $\xx$ via
	\[
	\|u\|_\xx^2=\|\nabla u\|_{\lt}^2+ \|L_{\beta/2}u\|_\lt^2.
	\]
	Notice from the sharp inequalities
	\begin{equation}\label{norm-equi}
		\left(1+\frac{\beta^{\frac{\beta}{1+\beta}}}{1+\beta}\right)\|u\|_{\dot{X}_\beta}^2\geq\|u\|_\x^2\geq \|u\|_{\dot{X}_\beta}^2
	\end{equation}
	that $\|u\|_\x \stackrel{\beta}{\sim}\|u\|_{\dot{X}_\beta}$.
	\begin{lemma}\label{lemma-best-masszero}
		Let $n\geq1$. For any function $u\in X_\beta$, there holds that 
		\begin{equation}\label{best-constant-zero-mass}
			\|u\|_{L^{p+2}(\rn)}^{p+2}\leq\vr_\ast\|\nabla u\|_\lt^{\frac{\beta}{2(\beta+1)}(2p+4-np)+\frac{np}{2}}
			\norm{L_{\beta/2} u}_\lt^{\frac{1}{2(\beta+1)}(2p+4-np)},
		\end{equation}
		where
		\[
		\vr_\ast^{-1}=
		C_{\beta,n,p}
		\|Q_\ast\|_{\dot{X}_\beta}^{p},
		\]
		\[
		\begin{split}
			C_{\beta,n,p}&=\left(\frac{1}{2(1+\beta)(2+p)}\right)^{\frac{p+2}{2}}
			(p(n+2\beta)+4\beta)^{\frac{\beta}{4(\beta+1)}(2p+4-np)+\frac{np}{4}} 
			(4-p(n-2))^{\frac{1}{4(\beta+1)}(2p+4-np)}\\&<1
		\end{split}
		\]
		and $Q_\ast$ is a ground state of 
		\begin{equation}\label{gs-mass-zero}
			L_\beta\ff-\Delta\ff=\ff^{p+1}.
		\end{equation}	
		
	\end{lemma}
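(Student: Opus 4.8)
The plan is to establish \eqref{best-constant-zero-mass} as the sharp Gagliardo–Nirenberg-type inequality associated to the scaling-invariant functional appearing in \eqref{gs-mass-zero}, identifying the optimizer with the ground state $Q_\ast$. The natural route is the standard variational characterization: define the Weinstein-type functional
\[
\mathcal{W}(u)=\frac{\|\nabla u\|_\lt^{\theta_1}\,\|L_{\beta/2}u\|_\lt^{\theta_2}}{\|u\|_{L^{p+2}(\rn)}^{p+2}},\qquad
\theta_1=\tfrac{\beta}{2(\beta+1)}(2p+4-np)+\tfrac{np}{2},\quad
\theta_2=\tfrac{1}{2(\beta+1)}(2p+4-np),
\]
and note that $\theta_1+\theta_2=p+2$ is exactly the homogeneity that makes $\mathcal{W}$ invariant under $u\mapsto c\,u(\delta\,\cdot)$ for all $c,\delta>0$ — this two-parameter invariance is forced because the two quadratic forms $\|\nabla\cdot\|_\lt^2$ and $\|L_{\beta/2}\cdot\|_\lt^2$ scale with opposite powers of $\delta$ (degrees $2$ and $-2\beta$), so the exponents $\theta_1,\theta_2$ are uniquely pinned down. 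Then $\vr_\ast^{-1}=\inf_{u\in\dot X_\beta\setminus\{0\}}\mathcal{W}(u)$, and the content of the lemma is (a) this infimum is attained, (b) the minimizer solves \eqref{gs-mass-zero} after rescaling, hence equals $Q_\ast$ up to the symmetries, and (c) evaluating $\mathcal{W}(Q_\ast)$ in terms of $\|Q_\ast\|_{\dot X_\beta}$ produces the stated constant $C_{\beta,n,p}$.

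\textbf{Step 1 (existence of a minimizer).} I would take a minimizing sequence $\{u_k\}$ for $\mathcal{W}$; using the two-parameter scaling I may normalize $\|\nabla u_k\|_\lt=\|L_{\beta/2}u_k\|_\lt=1$, so $\{u_k\}$ is bounded in $\dot X_\beta$, hence (by \eqref{norm-equi}) in $X_\beta$. By Lemma \ref{embeding-lemma}, $X_\beta\hookrightarrow L^{p+2}$ continuously and $X_\beta\hookrightarrow\hookrightarrow L^{p}_{\rm loc}$ compactly; to upgrade weak convergence to strong $L^{p+2}$ convergence of (a translate of) $u_k$ I would invoke a concentration-compactness / Lions-type argument, ruling out vanishing (since $\|u_k\|_{L^{p+2}}$ is bounded below) and dichotomy (by the strict subadditivity coming from the homogeneity), so after translation $u_k\to u_\ast$ strongly in $L^{p+2}$ and weakly in $\dot X_\beta$; lower semicontinuity of the two quadratic forms then gives $\mathcal{W}(u_\ast)\le\liminf\mathcal{W}(u_k)$, so $u_\ast$ is a minimizer.

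\textbf{Step 2 (Euler–Lagrange equation and identification with $Q_\ast$).} The minimizer satisfies, for Lagrange multipliers, an equation of the form $a\,(-\Delta)u_\ast+b\,L_\beta u_\ast=c\,u_\ast^{p+1}$ with $a,b,c>0$ (positivity of $u_\ast$ may be assumed by replacing $u_\ast$ with $|u_\ast|$, which does not increase $\|\nabla u_\ast\|_\lt$ and leaves the other terms unchanged); rescaling $u_\ast(x)\mapsto \kappa\, u_\ast(\mu x)$ with suitable $\kappa,\mu$ normalizes $a=b=c=1$, so the normalized minimizer solves \eqref{gs-mass-zero} and, being a minimizer of the associated quotient, it \emph{is} a ground state, i.e. it coincides with $Q_\ast$ up to the symmetries of \eqref{gs-mass-zero}. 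Conversely, any ground state $Q_\ast$ of \eqref{gs-mass-zero} realizes the infimum (this is the usual equivalence between the constrained-minimization and quotient-minimization formulations via Nehari-type scaling identities). This yields $\vr_\ast^{-1}=\mathcal{W}(Q_\ast)$.

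\textbf{Step 3 (computing the constant).} Finally I would extract $C_{\beta,n,p}$ by applying the two Pohozaev/virial identities to \eqref{gs-mass-zero}: multiplying by $Q_\ast$ and integrating gives $\|\nabla Q_\ast\|_\lt^2+\|L_{\beta/2}Q_\ast\|_\lt^2=\|Q_\ast\|_{L^{p+2}}^{p+2}$, while the dilation Pohozaev identity (testing against $x\cdot\nabla Q_\ast$, using that $-\Delta$ and $L_\beta=D^{-2\beta}$ have scaling degrees $2$ and $-2\beta$) gives a second linear relation between $\|\nabla Q_\ast\|_\lt^2$, $\|L_{\beta/2}Q_\ast\|_\lt^2$ and $\|Q_\ast\|_{L^{p+2}}^{p+2}$; solving this $2\times2$ system expresses each of $\|\nabla Q_\ast\|_\lt^2$ and $\|L_{\beta/2}Q_\ast\|_\lt^2$ as explicit rational multiples of $\|Q_\ast\|_{\dot X_\beta}^2=\|\nabla Q_\ast\|_\lt^2+\|L_{\beta/2}Q_\ast\|_\lt^2$ — the coefficients $4\beta+p(n+2\beta)$ and $4-p(n-2)$ are exactly what appear in the exponents. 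Substituting into $\mathcal{W}(Q_\ast)=\|\nabla Q_\ast\|_\lt^{\theta_1}\|L_{\beta/2}Q_\ast\|_\lt^{\theta_2}/\|Q_\ast\|_{L^{p+2}}^{p+2}$ and bookkeeping the powers of $\tfrac1{2(1+\beta)(2+p)}$ gives the displayed formula for $C_{\beta,n,p}$, and the bound $C_{\beta,n,p}<1$ follows from the elementary inequality that the geometric-type combination of the two convex coefficients summing appropriately is strictly less than their ``full-mass'' value. \textbf{The main obstacle} I anticipate is Step 1: the functional $\dot X_\beta$ is only a \emph{homogeneous} space and the $L_{\beta/2}$ term is nonlocal and singular at frequency zero, so the standard Rellich compactness is unavailable globally; one must carefully combine the local compactness from Lemma \ref{embeding-lemma} with a concentration-compactness argument adapted to the \emph{two}-parameter dilation symmetry (the low-frequency behavior controlled by $\|L_{\beta/2}u\|_\lt$ must be handled to exclude dichotomy), and also verify that the minimizer is nontrivial, i.e. that the infimum $\vr_\ast^{-1}$ is strictly positive — which is precisely the continuous embedding $X_\beta\hookrightarrow L^{p+2}$ of Lemma \ref{embeding-lemma} combined with \eqref{norm-equi}.
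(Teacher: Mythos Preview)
Your approach is the standard Weinstein variational argument and is essentially correct; the paper itself omits the proof entirely, citing Theorem~4.6 of \cite{wang-amin} ``with some modifications'', which almost certainly follows the same route you outline (quotient functional, minimizer via concentration-compactness, Euler--Lagrange plus Pohozaev to extract the constant).

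One small point deserves care. In Step~2 you write that positivity of $u_\ast$ may be assumed by replacing $u_\ast$ with $|u_\ast|$, claiming this ``leaves the other terms unchanged''. This is false for the nonlocal term: $L_{\beta/2}=D^{-\beta}$ is not a pointwise operator, so $\|D^{-\beta}|u|\|_{L^2}\neq\|D^{-\beta}u\|_{L^2}$ in general (and indeed the Remark following Theorem~\ref{radial-sol-thm} in the paper points out that solutions of the related equation cannot be positive). Fortunately this step is unnecessary: the Euler--Lagrange equation for the minimizer of $\mathcal W$ reads
\[
\frac{\theta_1}{\|\nabla u_\ast\|_\lt^2}(-\Delta u_\ast)+\frac{\theta_2}{\|L_{\beta/2}u_\ast\|_\lt^2}L_\beta u_\ast=\frac{p+2}{\|u_\ast\|_{L^{p+2}}^{p+2}}|u_\ast|^{p}u_\ast,
\]
so the coefficients $a,b,c$ are automatically positive (since $\theta_1,\theta_2>0$ in the admissible range $p<2^\ast$), and the rescaling to \eqref{gs-mass-zero} goes through without any sign information on $u_\ast$. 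With that correction your sketch is sound.
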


	\begin{proof}
		The proof is similar to one of Theorem 4.6 in \cite{wang-amin} with some modifications, so we omit the details.
	\end{proof}
	
	\begin{remark}
		It was shown in \cite[Proposition 5.1]{morpi} that \eqref{gag-2} is sharp. Hence, by combining \eqref{gag-l} and \eqref{gag-2}, we obtain from  \eqref{embed-1}  and \eqref{best-constant-zero-mass} that
		\[
\vr\leq\varrho_0\quad\text{and}\quad\varrho_\ast\leq\varrho_0.
		\]
	\end{remark}
	
	Before finding the conditions of guaranteeing the existence of global solutions, it is of independent interest to study the existence and properties of \eqref{gs-mass-zero}. Equation \eqref{gs-mass-zero} is a particular case of 
	\begin{equation}\label{standing}
		\omega \ff-\Delta\ff+\e  L_\beta\ff=\ff^{3}.
	\end{equation}
	Indeed, \eqref{standing} is deduced when one looks for a standing wave $u(x,t)=\ee^{\ii\omega t}\ff(x)$ of \eqref{rnls}.

	\begin{theorem}
		Let $\varepsilon>0$, $\beta>0$ and $\omega>\beta_\ast=-\left( \e\beta^{-\beta}\right)^{\frac{1}{\beta+1}}\left(1+\beta\right)$. Then equation \eqref{standing} possesses a nontrivial solution $\ff\in X_\beta$. Moreover, there exists a ground state $u$ of \eqref{standing}. Furthermore, $u\in\Gamma$ and $\tilde d_\omega=d_\omega$, where
		\[
		\tilde d_\omega=S_\omega(u)=\inf_{v\in\Gamma}S_\omega(v),
		\qquad
		d_\omega=\inf_{\gamma\in\Gamma}\max_{t\in[0,1]}S_\omega(\gamma(t)),
		\]
		and $\Gamma=\sett{\gamma\in C([0,1],X_\beta),\;\gamma(0)=0,S_\omega(\gamma(1))<0}$ and $\tilde\Gamma=\{\ff\in X_\beta\setminus\{0\},\;I(\ff)=\scal{S'(\ff),\ff}=0\}$.
	\end{theorem}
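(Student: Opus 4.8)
\medskip
\noindent\emph{Proof strategy.}
I would treat \eqref{standing} by a variational argument, producing a ground state at once as a Nehari minimizer and as a mountain--pass critical point. Work on the real Hilbert space $\x$ with
\[
S_\omega(\ff)=\tfrac12 Q_\omega(\ff)-\tfrac14\|\ff\|_{L^4(\rn)}^4,\qquad
Q_\omega(\ff):=\|\nabla\ff\|_{\lt}^2+\e\,\|L_{\beta/2}\ff\|_{\lt}^2+\omega\|\ff\|_{\lt}^2,
\]
so that $\scal{S_\omega'(\ff),h}=\scal{Q_\omega'(\ff),h}-\int_\rn|\ff|^2\ff\,h$ and critical points of $S_\omega$ are exactly the real solutions of \eqref{standing}. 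The first step is coercivity. Since $\|L_{\beta/2}\ff\|_\lt^2=\int_\rn|\xi|^{-2\beta}|\widehat\ff(\xi)|^2\,\dd\xi$, the quadratic form $Q_\omega$ has Fourier symbol $r^2+\e\, r^{-2\beta}+\omega$ with $r=|\xi|$; for $\e>0$, $\beta>0$ this symbol attains its minimum over $r>0$ at $r_0=(\beta\e)^{1/(2\beta+2)}$, and the minimal value equals $(\e\beta^{-\beta})^{1/(\beta+1)}(\beta+1)+\omega=\omega-\beta_\ast$. Thus the hypothesis $\omega>\beta_\ast$ is precisely what makes $r^2+\e\,r^{-2\beta}+\omega\gtrsim 1+r^2+r^{-2\beta}$, i.e. $Q_\omega(\ff)\sim\|\ff\|_\x^2$; in particular $(\x,Q_\omega)$ is a Hilbert space with norm equivalent to $\|\cdot\|_\x$. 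This is the only point where the exact threshold $\beta_\ast$ and the sign of $\e$ are used.

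Combined with the embedding $\x\hookrightarrow L^4(\rn)$ of Lemma \ref{embeding-lemma}, coercivity yields the mountain--pass geometry: $S_\omega(0)=0$, $S_\omega(\ff)\ge c\|\ff\|_\x^2-C\|\ff\|_\x^4>0$ for $\|\ff\|_\x$ small and positive, and $S_\omega(t\ff)=\tfrac{t^2}{2}Q_\omega(\ff)-\tfrac{t^4}{4}\|\ff\|_{L^4}^4\to-\infty$ as $t\to\infty$ for each $\ff\neq0$; hence $\Gamma\neq\emptyset$ and $d_\omega>0$. Next I would collapse all the minimax quantities onto one quotient. For fixed $\ff\neq0$ the fibering map $t\mapsto S_\omega(t\ff)$ has the unique positive maximizer $t_\ff=\big(Q_\omega(\ff)/\|\ff\|_{L^4}^4\big)^{1/2}$, the point $t_\ff\ff$ is the unique element of $\tilde\Gamma$ on the ray $\R_{+}\ff$, and $S_\omega(t_\ff\ff)=\tfrac14\big(Q_\omega(\ff)/\|\ff\|_{L^4}^2\big)^2$. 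Writing
\[
\nu:=\inf_{\ff\in\x\setminus\{0\}}\frac{Q_\omega(\ff)}{\|\ff\|_{L^4(\rn)}^2},
\]
which is positive by coercivity and the embedding, one gets $\tilde d_\omega=\inf_{\ff\neq0}\max_{t\ge0}S_\omega(t\ff)=\tfrac14\nu^2$, and $d_\omega=\tilde d_\omega$ follows routinely: rays through the origin are admissible paths, so $d_\omega\le\tilde d_\omega$; conversely, for any $\gamma\in\Gamma$ one has $I(\gamma(t))=Q_\omega(\gamma(t))-\|\gamma(t)\|_{L^4}^4>0$ for $t$ near $0$ while $S_\omega(\gamma(1))<0$ forces $I(\gamma(1))<-Q_\omega(\gamma(1))<0$, so $\gamma$ meets $\tilde\Gamma$ and $\max_tS_\omega(\gamma(t))\ge\tilde d_\omega$.

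The heart of the proof, and the main obstacle, is the attainment of $\nu$: \eqref{standing} is invariant under spatial translations, so a minimizing sequence need not be precompact in $\x$. I would argue by the concentration--compactness alternative. Pick $\ff_k$ with $\|\ff_k\|_{L^4}=1$ and $Q_\omega(\ff_k)\to\nu$; by coercivity $(\ff_k)$ is bounded in $\x$, hence in $H^1(\rn)$. Vanishing is excluded by Lions' lemma: if $\sup_y\int_{B(y,R)}|\ff_k|^2\to0$, then $\ff_k\to0$ in $L^4$, contradicting $\|\ff_k\|_{L^4}=1$ (this uses that the quartic term is energy-subcritical, valid for $n\le3$; the critical dimension $n=4$ would require in addition a Brezis--Nirenberg-type comparison placing $\nu$ below the Sobolev constant). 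Translating, $\ff_k(\cdot+y_k)\rightharpoonup\ff\neq0$ in $\x$ along a subsequence, where the compact embedding $\x\hookrightarrow\hookrightarrow L^2_{\mathrm{loc}}(\rn)$ of Lemma \ref{embeding-lemma} gives both $\ff\neq0$ and a.e.\ convergence $\ff_k\to\ff$. Weak convergence in the Hilbert space $(\x,Q_\omega)$ gives $Q_\omega(\ff_k)=Q_\omega(\ff)+Q_\omega(\ff_k-\ff)+o(1)$, and the Brezis--Lieb lemma gives $1=\|\ff\|_{L^4}^4+\|\ff_k-\ff\|_{L^4}^4+o(1)$. Using $Q_\omega(\ff)\ge\nu\|\ff\|_{L^4}^2$ and $Q_\omega(\ff_k-\ff)\ge\nu\|\ff_k-\ff\|_{L^4}^2$ one obtains in the limit $\nu\ge\nu(\sqrt a+\sqrt{1-a})$ with $a=\|\ff\|_{L^4}^4\in(0,1]$; since $\sqrt a+\sqrt{1-a}>1$ for $a\in(0,1)$ and $\nu>0$, necessarily $a=1$, so $\ff_k\to\ff$ in $L^4$, and weak lower semicontinuity of $Q_\omega$ forces $Q_\omega(\ff)=\nu$. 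Hence $\nu$ is attained at $\ff$.

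Finally, the Lagrange multiplier rule gives $-\Delta\ff+\e L_\beta\ff+\omega\ff=\mu\ff^3$ for the minimizer $\ff$, and pairing with $\ff$ identifies $\mu=\nu/2>0$; rescaling, $u:=\sqrt{\nu}\,\ff$ is a nontrivial solution of \eqref{standing}, $u\in\x$, $u$ lies on $\tilde\Gamma$, and $S_\omega(u)=\tfrac14\nu^2=\tilde d_\omega=d_\omega$. Since every element of $\tilde\Gamma$ satisfies $S_\omega\ge\tilde d_\omega$, $u$ is a ground state. Real-valuedness is built in by carrying out the whole minimization over real-valued functions, which gives the same infimum $\nu$ and a genuine solution of \eqref{standing}, there being no gauge obstruction. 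The one genuine difficulty is thus the compactness of the minimizing sequence for $\nu$; it is worth emphasizing that the customary shortcut via symmetric decreasing rearrangement is unavailable here, since by the Riesz rearrangement inequality applied to the positive, radially decreasing kernel $|x|^{2\beta-n}$ the term $\|L_{\beta/2}\ff\|_\lt$ \emph{increases} under rearrangement (whereas $\|\nabla\ff\|_\lt$ decreases), so one cannot reduce to radial functions and must argue by concentration--compactness.
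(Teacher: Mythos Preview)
Your argument is correct and follows the same variational strategy as the paper's: verify coercivity of $Q_\omega$ for $\omega>\beta_\ast$, establish the mountain--pass geometry via the embedding $\x\hookrightarrow L^4$, and recover compactness of a minimizing/Palais--Smale sequence through a Lions non-vanishing step. The paper's proof is only a sketch that defers both the compactness argument and the identity $d_\omega=\tilde d_\omega$ to results in \cite{amin-pas}, whereas you carry these out explicitly by reducing to the Nehari quotient $\nu$ and using Brezis--Lieb; one minor slip: pairing the Euler--Lagrange equation with $\ff$ (normalized by $\|\ff\|_{L^4}=1$) gives $\mu=Q_\omega(\ff)=\nu$, not $\nu/2$, though your rescaling $u=\sqrt{\nu}\,\ff$ is nonetheless the correct one.
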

	\begin{proof}
		The proof follows a standard approach utilizing the Mountain-Pass Lemma without requiring the Palais-Smale condition. For the sake of completeness, we provide a sketch. Indeed,  define
		\[
		S_\omega(\ff)=\frac12\int_\rn\paar{\abso{L_{\beta/2}\ff}^2+|\nabla\ff|^2+\omega|\ff|^2}\dx-\frac14\|\ff\|_{L^4(\rn)}^4,
		\]
		and see that the critical points are the weak solutions of \eqref{standing}. By using \eqref{embed-1} and the fact
		\[		\int_\rn\paar{\abso{L_{\beta/2}\ff}^2+|\nabla\ff|^2+\omega|\ff|^2}\dx\gtrsim\|\ff\|_{X_\beta}^2
		\]
		for any $\omega>\beta_\ast$, we observe that
		\[
		S_\omega(\ff)\gtrsim\frac14\|\ff\|_{X_\beta}^2-
		\|\ff\|_{X_\beta}^4\geq\delta>0,\qquad \ff\in B_r(0)\subset X_\beta
		\]
		for some $\delta>0$, independent of $\ff$, and $r>0$ small enough. The structure of $S_\omega$ shows that the existence of $e\in B_r^c(0)\subset X_\beta$ with $S_\omega(e)_{X_\beta}<0$. On the other hand, \eqref{embed-1} shows that any mountain-pass sequence is bounded in $X_\beta$. Finally, the Mountain-pass geometry implies the existence of a nontrivial solution of \eqref{standing} by using the local compactness in Lemma \ref{embeding-lemma} and the non-vanishing property in \cite[Lemma 2.14]{amin-pas}. The second part of the proof comes from the same lines of Theorems 2.20 and 2.21 in \cite{amin-pas}, and we omit the details.
	\end{proof}

	In the case $n\geq2$, we can benefit from the compact embedding $X_\beta ^{\rm rad} \hookrightarrow L^p(\rn)$ for any $p$ given in Lemma \ref{embeding-lemma} and show the existence of radially symmetric solutions of \eqref{standing}.
	
	\begin{theorem}\label{radial-sol-thm}
		Let $n\geq2$,	$\beta>0$ and $\omega>\beta_\ast$. Then   any minimizing sequence of
		\[
		\inf\sett{I_\e(u),\;u\in X_\beta^{\rm rad},\|u\|_{L^4(\rn)}^4=\lam},
		\]
		is relatively compact in $X_\beta^{\rm rad}$. Furthermore, 
		\[
		\lim_k\inf_{\ff\in\ngg}\inf_{y\in\rn}\|\psi_k(\cdot+y)-\ff\|_\x=0 \quad\mbox{and}\quad
		\lim_k\inf_{\ff\in\ngg} \|\psi_k-\ff\|_\x=0,
		\]
		where $\ngg$ is the set of all ground states of \eqref{standing}.
	\end{theorem}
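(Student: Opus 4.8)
The plan is to run the direct method of the calculus of variations in the radial class, where the loss of compactness due to translations is absent, and then to identify the constrained minimizers with the (radial) ground states of \eqref{standing}. Write
\[
m_\lam:=\inf\sett{I_\e(u),\;u\in \rx,\ \|u\|_{L^4(\rn)}^4=\lam},
\]
and let $\{\psi_k\}\subset \rx$ be a minimizing sequence. Since $\omega>\beta_\ast$, the quadratic form underlying $I_\e$ satisfies $\int_\rn\paar{\e|L_{\beta/2}u|^2+|\nabla u|^2+\omega|u|^2}\dx\gtrsim\|u\|_\xx^2\sim\|u\|_\x^2$ by \eqref{norm-equi}, while it is clearly bounded above by a multiple of $\|u\|_\x^2$; hence $I_\e$ is coercive on $\rx$, the sequence $\{\psi_k\}$ is bounded there, and — $\rx$ being a closed subspace of the Hilbert space $\x$ — we may pass to a subsequence with $\psi_k\rightharpoonup\psi$ weakly in $\rx$.

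The crucial ingredient is the compact embedding $\rx\hookrightarrow L^4(\rn)$ for $n\ge2$: since $\x\hookrightarrow H^1(\rn)$, radial elements obey a Strauss-type pointwise decay away from the origin, which together with the local compactness in Lemma \ref{embeding-lemma} upgrades the embedding into $L^4(\rn)$ to a globally compact one. Consequently $\psi_k\to\psi$ strongly in $L^4(\rn)$, so $\|\psi\|_{L^4(\rn)}^4=\lam>0$; in particular $\psi\ne0$ and $\psi$ lies on the constraint manifold. As $I_\e$ is a positive-definite quadratic form, it is convex and strongly continuous, hence weakly lower semicontinuous, so
\[
m_\lam\le I_\e(\psi)\le\liminf_k I_\e(\psi_k)=m_\lam,
\]
which shows that $\psi$ is a minimizer and $I_\e(\psi_k)\to I_\e(\psi)$. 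Since $2I_\e$ is the square of a Hilbert norm on $\rx$ equivalent to $\|\cdot\|_\x$, weak convergence together with convergence of norms forces $\psi_k\to\psi$ strongly in $\rx$. As every minimizing sequence thus admits a subsequence converging in $\rx$, the first assertion follows.

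For the link with \eqref{standing}, the Lagrange multiplier rule applied to a minimizer $\psi$ gives $\e L_\beta\psi-\Delta\psi+\omega\psi=\mu\psi^3$ for some $\mu\in\R$; pairing with $\psi$ and using $m_\lam>0$ and $\lam>0$ gives $\mu>0$, so $\tilde\psi:=\mu^{-1/2}\psi$ solves \eqref{standing} and is radial. Comparing the (rescaled) level $m_\lam$ with the Nehari/mountain-pass value $\tilde d_\omega=d_\omega$ from the preceding theorem identifies $\tilde\psi$ as a ground state, and conversely each radial ground state, suitably normalized, is a constrained minimizer; hence, up to this fixed rescaling, the set of constrained minimizers coincides with the radial part of $\ngg$. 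The two displayed limits now follow from the relative compactness by the usual contradiction argument: if $\inf_{\ff\in\ngg}\|\psi_k-\ff\|_\x$ did not tend to $0$, some subsequence would stay at a positive distance from $\ngg$ while still minimizing, hence would have a further subsequence converging in $\x$ to a minimizer, i.e. to an element of $\ngg$ — a contradiction; the translated statement with $\inf_{y\in\rn}$ is weaker and follows a fortiori.

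I expect two main obstacles. First, establishing the \emph{global} (rather than merely local) compactness of the radial embedding $\rx\hookrightarrow L^4(\rn)$: the low-frequency weight $|\xi|^{-\beta}$ in the $X_\beta$-norm interacts with the Strauss decay and must be controlled carefully near the origin in frequency and at spatial infinity. Second, the precise identification of constrained minimizers with ground states of \eqref{standing}, which requires the somewhat delicate bookkeeping of the rescaling, the positivity of the Lagrange multiplier, and the matching of the constrained minimal level with the variational level $d_\omega=\tilde d_\omega$.
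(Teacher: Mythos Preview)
Your approach is correct and is precisely the one the paper intends: the paper does not spell out a proof of this theorem, but immediately before it indicates that one ``can benefit from the compact embedding $X_\beta^{\rm rad}\hookrightarrow L^p(\rn)$,'' and immediately after it \emph{defines} ground states as the rescaled constrained minimizers, so your direct-method argument (coercivity from $\omega>\beta_\ast$, compact radial embedding into $L^4$, weak lower semicontinuity of the quadratic form, and the weak-plus-norm-convergence upgrade to strong convergence) together with the Lagrange-multiplier identification is exactly the implicit proof. The two obstacles you flag are the right ones and are handled by the Strauss decay (since $\x\hookrightarrow H^1$) for the first and by the paper's own convention/Nehari characterization in Lemma~\ref{variational-min-1} for the second.
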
	
	\begin{remark}
		It is observed from \eqref{standing} that no solution $\ff$ can be positive and
  \[
  \int_\rn\ff\;\dd x=0.
  \] This is contrary to the case $\varepsilon=0$ where the ground state is positive.
	\end{remark}
	
	Solutions of \eqref{standing}, obtained as rescaled minimizers of the above minimization problem, are henceforth referred to as ground-state solutions. We note that the existence of the (rescaled) ground states of \eqref{standing}, up to a rescaling, is equivalent to the existence of minimizers of
	\begin{equation}\label{m-min}
		m_\e=\inf_{u\in\x\setminus\{0\}}\frac{  I_\e(u)}{\|u\|_{L^{4}(\rn)}^2},
	\end{equation}
	where $I_\epsilon(u)=2S_\omega(u)+2\|u\|_{L^4}^4$.
	
	We delve into the behavior of the minimizers of \eqref{m-min} as $\e$ varies. It is recalled from \cite{lions} that the ground states $\ee^{\ii\omega t}\ff(x)$ of \eqref{nls} are related to the minimizers of
	\begin{equation}\label{zer0-m-min}
		m_0=\inf_{u\in H^1(\rn)\setminus\{0\}}\frac{ I_0(u)}{\|u\|_{L^{4}(\rn)}^2}.
	\end{equation}
	
	To emphasize the dependence of $m$ on $\e$, we denote $\ngg$ by $\ngg_\e$. It is well-known that the ground states of \eqref{nls}, up to translation, are unique, such that $\ngg_0$ consists of $\ff_0(\cdot+y)$ for all $y\in\rn$ and some smooth function $\ff_0\in H^1(\rn)$. In the one-dimensional case $n=1$, it takes the explicit form
	\[
	\ff_0(x)=\sqrt{2\omega}\;{\rm sech} \left( \sqrt{\omega}x\right).
	\]
	\begin{lemma}\label{density-space}
		The space $\x$	 is dense in $H^1(\rn)$.
	\end{lemma}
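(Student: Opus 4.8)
The plan is to approximate an arbitrary $u\in H^1(\rn)$ by functions whose Fourier transform is supported away from the origin; such functions automatically lie in $\x$, because the only obstruction to membership in $\dot H^{-\beta}$ is a possible singularity of $|\xi|^{-\beta}\what u(\xi)$ near $\xi=0$, and deleting a neighbourhood of the origin renders the multiplier $|\xi|^{-\beta}$ bounded.

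Concretely, let $\chi$ be the even cutoff fixed in the notations above (so $\chi\equiv 1$ on $[-1,1]$ and $\supp\chi\subset[-2,2]$), and for $\delta>0$ define $u_\delta$ through $\what{u_\delta}(\xi)=\bigl(1-\chi(|\xi|/\delta)\bigr)\what u(\xi)$, so that $\supp\what{u_\delta}\subset\{|\xi|\ge\delta\}$. Then $u_\delta\in\x$: on the one hand $\|u_\delta\|_{H^1}\le\|u\|_{H^1}$ since the Fourier multiplier $1-\chi(|\cdot|/\delta)$ is bounded by $1$; on the other hand
\[
\|D^{-\beta}u_\delta\|_\lt^2=\int_{|\xi|\ge\delta}|\xi|^{-2\beta}\bigl|1-\chi(|\xi|/\delta)\bigr|^2\,|\what u(\xi)|^2\d\xi\le\delta^{-2\beta}\|u\|_\lt^2<\infty.
\]
(Observe that the hypothesis $2\beta<n$ plays no role in this step; it is only needed elsewhere, e.g. for the local integrability of the homogeneous weight.)

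It remains to check that $u_\delta\to u$ in $H^1(\rn)$ as $\delta\to0^+$. By Plancherel,
\[
\|u-u_\delta\|_{H^1}^2=\int_\rn\chi(|\xi|/\delta)^2\,\angles{\xi}^2|\what u(\xi)|^2\d\xi\le\int_{|\xi|\le 2\delta}\angles{\xi}^2|\what u(\xi)|^2\d\xi,
\]
and since $\angles{\xi}^2|\what u(\xi)|^2\in L^1(\rn)$, the right-hand side tends to $0$ as $\delta\to0^+$ by the absolute continuity of the Lebesgue integral (equivalently, dominated convergence). Since $u\in H^1(\rn)$ was arbitrary, $\x$ is dense in $H^1(\rn)$.

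This argument contains no genuine obstacle; the single subtlety worth flagging is that, although $\x$ carries the homogeneous weight $|\xi|^{-2\beta}$ which is singular at the origin, the approximants $u_\delta$ are engineered precisely to avoid that singularity while costing only the low-frequency part of $u$, which is negligible in the $H^1$ limit.
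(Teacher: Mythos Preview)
Your proof is correct and follows essentially the same route as the paper: both arguments cut off the Fourier transform near the origin, observe that the truncated function lies in $\x$ because $|\xi|^{-2\beta}$ is now bounded on its Fourier support, and then use dominated convergence to show the low-frequency piece removed is negligible in $H^1$. The only cosmetic difference is that you use the smooth cutoff $1-\chi(|\xi|/\delta)$ while the paper uses the sharp indicator $\chi_{\{\rn\setminus B_\delta(0)\}}$; this has no effect on the argument.
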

	
	\begin{proof}
		Let $u\in H^1(\rn)$ and $\delta>0$. Define the function $u_\delta$ by 
		\[
		\hat{u}_\delta(\xi)=\hat{u}(\xi)\chi_\delta(\xi),
		\]
		where $\chi_\delta=\chi_{\{\rn\setminus B_\delta(0)\}}$. Then, $u_\delta\in H^1(\rn)$, and we have from the Plancherel theorem that
		\[
		\|L_\beta u\|_\lt^2=\int_{\rn\setminus B_\delta(0)}|\xi|^{-2\beta}|\hat{u}(\xi)|^2\dd\xi<\delta^{-2\beta}\|u\|_\lt^2<+\infty.
		\]
		Hence, $u_\delta\in\x$. On the other hand, 
		\[
		\|u-u_\delta\|_{H^1(\rn)}^2=\int_{B_\delta(0)}(1+|\xi|^2)|\hat{u}(\xi)|^2\dd\xi\leq\|u\|_{H^1(\rn)}^2<+\infty.
		\]
		Hence, $u_\delta\to u$ in $H^1(\rn)$ as $\delta\to0$; and the proof is complete.
	\end{proof}
	\begin{theorem}\label{convergence-theorem}
		Let $\omega,\beta>0$ be fixed. Assume that $\{\e_k\} \subset\rr^+$ is a sequence converging to zero. If $\ff\in\ngg_{\e_k}$, then there exists a subsequence of $\{\e_k\}$, denoted by the same $\e_k$, and $\{y_k\}\subset\rn$ such that $\ff_k(\cdot+y_k)$ converges in $H^1(\rn)$ to $\ff_0$ as $\e_k\to0$.
	\end{theorem}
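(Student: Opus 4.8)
\textbf{Proof proposal for Theorem \ref{convergence-theorem}.}

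The plan is a standard concentration-compactness / profile-decomposition argument adapted to the present $\e\to0$ setting, with the key point being to control the nonlocal term $\e_k\,\norm{L_{\beta/2}\ff_k}_\lt^2$ uniformly. First I would fix a minimizer $\ff_k\in\ngg_{\e_k}$ of the rescaled problem $m_{\e_k}$, normalized so that $\norm{\ff_k}_{L^4}^4=1$ (this is legitimate since $\ngg_\e$ is defined up to the rescaling in \eqref{m-min}); then $I_{\e_k}(\ff_k)=m_{\e_k}$ and $\norm{\ff_k}_{X_\beta}^2\le I_{\e_k}(\ff_k)$ up to the equivalence \eqref{norm-equi}, so it suffices to show $m_{\e_k}\to m_0$ and that $(\ff_k)$ is precompact in $H^1$ modulo translations. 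The upper bound $\limsup_k m_{\e_k}\le m_0$ comes from testing: given $v\in\x$ (which is dense in $H^1$ by Lemma \ref{density-space}), one has $I_{\e_k}(v)=I_0(v)+\e_k\norm{L_{\beta/2}v}_\lt^2\to I_0(v)$, hence $m_{\e_k}\le I_{\e_k}(v)/\norm{v}_{L^4}^2\to I_0(v)/\norm{v}_{L^4}^2$; taking $v$ to approach $\ff_0$ and using density gives $\limsup_k m_{\e_k}\le m_0$. In particular $\norm{\ff_k}_{H^1}$ and $\e_k^{1/2}\norm{L_{\beta/2}\ff_k}_\lt$ are bounded.

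Next I would run Lions' concentration-compactness on the bounded sequence $(\ff_k)$ in $H^1(\rn)$ with $\norm{\ff_k}_{L^4}^4=1$. The vanishing and dichotomy cases are excluded in the usual way: vanishing would force $\norm{\ff_k}_{L^4}\to0$ by the standard lemma, contradicting the normalization; dichotomy is ruled out by the strict subadditivity of $\lambda\mapsto m_0\lambda^{1/2}$-type functional together with the lower bound $\liminf_k m_{\e_k}\ge m_0$, which follows because any weak-limit profile $\ff_0^\ast$ of $\ff_k(\cdot+y_k)$ satisfies, by weak lower semicontinuity of the $H^1$-norm and nonnegativity of $\e_k\norm{L_{\beta/2}\ff_k}_\lt^2$, the bound $I_0(\ff_0^\ast)\le\liminf_k I_{\e_k}(\ff_k)=\liminf_k m_{\e_k}$, while $\norm{\ff_0^\ast}_{L^4}^4\le1$; combined with the definition of $m_0$ this pins down equality and forces $\norm{\ff_0^\ast}_{L^4}=1$, hence strong $L^4$-convergence and then strong $H^1$-convergence of the translates $\ff_k(\cdot+y_k)$ to $\ff_0^\ast$. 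The same chain of inequalities shows $\ff_0^\ast$ is a minimizer of $m_0$, so by uniqueness of the ground state of \eqref{nls} (up to translation) one has $\ff_0^\ast=\ff_0$ after a further translation, absorbing the extra shift into $y_k$.

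Finally I would pass back to the equation: since $\ff_k(\cdot+y_k)\to\ff_0$ strongly in $H^1$, $\norm{\ff_k}_{L^4}=1$ for all $k$, and $I_{\e_k}(\ff_k)=m_{\e_k}\to m_0=I_0(\ff_0)$, one concludes that $\e_k\norm{L_{\beta/2}\ff_k}_\lt^2=I_{\e_k}(\ff_k)-I_0(\ff_k)=m_{\e_k}-I_0(\ff_k)\to0$, which confirms consistency and completes the argument. The main obstacle I anticipate is the lower semicontinuity step: one must verify that the nonlocal term $\e_k\norm{L_{\beta/2}\ff_k}_\lt^2$ does not carry mass to infinity in a way that undercuts the inequality $I_0(\ff_0^\ast)\le\liminf I_{\e_k}(\ff_k)$ — but since this term is nonnegative it can only help, so the genuine work is the routine but careful exclusion of dichotomy, i.e. establishing the strict subadditivity that makes concentration-compactness close. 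Everything else reduces to the density Lemma \ref{density-space}, the embedding Lemma \ref{embeding-lemma}, and the known uniqueness of $\ff_0$.
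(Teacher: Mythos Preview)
Your proposal is correct and follows essentially the same route as the paper: both arguments reduce to establishing $m_{\e_k}\to m_0$ via the density Lemma \ref{density-space} (upper bound) and the nonnegativity of the nonlocal term (lower bound), after which $(\ff_k)$ becomes a minimizing sequence for the $\e=0$ problem and Lions' concentration-compactness finishes. The paper additionally records that $\e\mapsto m_\e$ is increasing and locally Lipschitz, and leaves the final compactness step implicit by invoking \cite{lions}, whereas you spell out the vanishing/dichotomy exclusion directly; these are presentational differences only (one small caveat: your normalization $\norm{\ff_k}_{L^4}=1$ rescales the elements of $\ngg_{\e_k}$, so at the end you should undo this using $m_{\e_k}\to m_0$ to recover convergence to $\ff_0$ rather than its $L^4$-normalized multiple).
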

	
	\begin{proof}
		If we show that $m_\e$ is continuous at $\e=0$, then 
		\[
		\lim_k\|\ff_k\|_{L^{4}(\rn)}^{4}=\lim_k m_{\e_k}^{2}=m_0^{2}
		\]
		and
		\[
		\limsup_kI_0(\ff_k)=\limsup_k\left(I_{\e_k}(\ff_k)-\e_k\|L_{\beta/2}\ff_k\|_\lt^2\right)
		\leq\lim_k I_{\e_k}(\ff_k)=
		\lim_k m_{\e_k}^2
		=m_0^2;
		\]
		and the proof of the theorem is complete. 
		
		To prove the continuity of $m_\e$ at $\e=0$, it is enough to prove that $m_{\e}\to m_0$ as $\e\to0^+$. Observe that $m_\e$ is an increasing and continuous function in $\e$. Indeed, if $\ff_1$ and $\ff_2$ are ground states with parameters $\e_1<\e_2$  in \eqref{standing}, respectively, then we have from the definition of $m_\e$ that
		\[
		\begin{split}
			m_{\e_1}&=\frac{ I_{\e_1}(\ff_1)}{\|\ff_1\|_{L^{4}(\rn)}^2} 
			\leq  
			\frac{ I_{\e_1}(\ff_2)}{\|\ff_2\|_{L^{4}(\rn)}^2} 
			\leq
			\frac{ I_{\e_2}(\ff_2)-(\e_2-\e_1)\|L_{\beta/2}\ff_2\|_\lt^2}{\|\ff_2\|_{L^{4}(\rn)}^2}\\
			&\leq m_{\e_2}-(\e_2-\e_1)
			\frac{\|L_{\beta/2}\ff_2\|_\lt^2}{\|\ff_2\|_{L^{4}(\rn)}^2} 
			<  m_{\e_2}.
		\end{split}
		\]
		Thus $m_\e$ is increasing in $\e$.	Similarly, we obtain that 
		\[
		m_{\e_2}\leq m_{\e_1}+
		(\e_2-\e_1)
		\frac{\|L_{\beta/2}\ff_1\|_\lt^2}{\|\ff_1\|_{L^{4}(\rn)}^2};
		\]
		and thereby
		\[
		|m_{\e_2}-m_{\e_1}|\lesssim m_{\e_1}(\e_2-\e_1).
		\]
		Thus $m_\e$ is locally Lipschitz continuous in $\e$. Now, by Lemma \ref{density-space}, we can choose $\phi_k\in\x$ such that $\|\phi_k-\ff_0\|_{H^1(\rn)}<\frac 1k$. By defining
		\[
		\al_k=\min\left\{k^{-1},\|L_{\beta/2}\phi_k\|_\lt^{-2}\right\},
		\]
		we have
		\[
		m_{\al_k}\leq\frac{I_{0}(\phi_k)+\al_k\|L_{\beta/2}\phi_k\|_\lt^2}{\|\phi_k\|_{L^{4}(\rn)}^2}
		\leq
		\frac{I_{0}(\phi_k)+\frac1k}{\|\phi_k\|_{L^{4}(\rn)}^2}.
		\]
		Taking $\limsup$, we get from the continuity that
		$\limsup_km_{\al_k}\leq m_0$. On the other hand, for any $u_k\in\ngg_{\al_k}$ we obtain that
		\[
		m_0\leq\frac{I_0(u_k)}{\|u_k\|_{L^{4}(\rn)}^2}
		\leq\frac{I_{\al_k}(u_k)-\al_k\|L_{\beta/2}u_k\|_\lt^2}{\|u_k\|_{L^{4}(\rn)}^2}
		<\frac{I_{\al_k}(u_k)}{\|u_k\|_{L^{4}(\rn)}^2}=m_{\al_k}.
		\]
		This means that $\liminf_km_{\al_k}\geq m_0$, and the claim follows.
	\end{proof} 
	\begin{lemma}\label{variational-min-1}
		Let $\omega>\beta_\ast$. Then $\ngg$ is nonempty and $\ff\in\ngg$ iff
		$N(\ff)=0$  and
		\[
		S(\ff)=\inf_{u\in N}S(u),
		\]
		where $N=\sett{u\in\x\setminus\{0\},\,N(u)=\langle S'(u),u\rangle_\lt=0}$.
	\end{lemma}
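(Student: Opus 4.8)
The plan is to identify $\ngg$ with the set of minimizers of the action $S=S_\omega$ on the Nehari manifold $N$, using the fibering structure of $S$ and a Lagrange multiplier argument, while borrowing the one genuinely analytic ingredient (existence of such a minimizer) from the preceding existence theorem. Throughout I write $Q(u)=\|L_{\beta/2}u\|_\lt^2+\|\nabla u\|_\lt^2+\omega\|u\|_\lt^2$, so that $S(u)=\tfrac12 Q(u)-\tfrac14\|u\|_{L^4(\rn)}^4$ and $N(u)=\scal{S'(u),u}=Q(u)-\|u\|_{L^4(\rn)}^4$. Recall from the coercivity estimate established above that $Q(u)\gtrsim\|u\|_\x^2$ whenever $\omega>\beta_\ast$, and that $\|u\|_{L^4(\rn)}>0$ for every $u\in\x\setminus\{0\}$ by Lemma~\ref{embeding-lemma}; on $N$ one therefore has $S(u)=\tfrac14 Q(u)=\tfrac14\|u\|_{L^4(\rn)}^4$.

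First I would record the fibering picture. For fixed $u\in\x\setminus\{0\}$ the function $t\mapsto S(tu)=\tfrac{t^2}{2}Q(u)-\tfrac{t^4}{4}\|u\|_{L^4(\rn)}^4$ has a unique positive critical point $t_\ast(u)=\big(Q(u)/\|u\|_{L^4(\rn)}^4\big)^{1/2}$, which is its strict global maximum on $(0,\infty)$; moreover $t_\ast(u)u\in N$, while $N(tu)>0$ for $0<t<t_\ast(u)$ and $N(tu)<0$ for $t>t_\ast(u)$. Hence $N\neq\emptyset$ and, since $t_\ast(v)=1$ for $v\in N$, one gets $\inf_{v\in N}S(v)=\inf_{u\neq0}\max_{t\geq0}S(tu)$, which is the mountain pass value $d_\omega=\tilde d_\omega$ from the existence theorem (consistent with $\tilde\Gamma=N$). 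Applying Lemma~\ref{embeding-lemma} once more, every $u\in N$ satisfies $Q(u)=\|u\|_{L^4(\rn)}^4\lesssim\|u\|_\x^4\lesssim Q(u)^2$, so $Q(u)\geq\delta_0>0$ for an absolute constant $\delta_0$, and therefore $\inf_{v\in N}S(v)\geq\delta_0/4>0$.

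Next I would show that every minimizer of $S$ over $N$ is a nontrivial solution of \eqref{standing}, and conversely. Existence of some $u_0\in N$ with $S(u_0)=\inf_N S$ --- equivalently, attainment of $d_\omega$ --- is exactly what the mountain pass argument above provides via the local compactness in Lemma~\ref{embeding-lemma} and the non-vanishing property; in particular $\ngg\neq\emptyset$. Now let $u\in N$ be any minimizer of $S$ over $N$. Since $\scal{N'(u),u}=2Q(u)-4\|u\|_{L^4(\rn)}^4=-2Q(u)<0$, the set $N$ is a $C^1$ submanifold of $\x$ near $u$ and there is $\mu\in\rr$ with $S'(u)=\mu N'(u)$; pairing with $u$ yields $0=N(u)=\scal{S'(u),u}=\mu\scal{N'(u),u}=-2\mu Q(u)$, whence $\mu=0$ and $S'(u)=0$, i.e. $u$ solves \eqref{standing}. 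Conversely, every nontrivial solution $v$ of \eqref{standing} has $S'(v)=0$, hence $\scal{S'(v),v}=0$, i.e. $v\in N$; consequently $S(v)\geq c:=\inf_{w\in N}S(w)$ for all such $v$. Combining these two facts, $c$ is attained and equals the least action among all nontrivial solutions of \eqref{standing}. Since by definition $\ngg$ is exactly the set of nontrivial solutions $\ff$ of \eqref{standing} with $S(\ff)=c$, we conclude: if $\ff\in\ngg$ then $\ff$ solves \eqref{standing}, so $N(\ff)=0$, and $S(\ff)=c=\inf_N S$; conversely, if $N(\ff)=0$ and $S(\ff)=\inf_N S$, then $\ff$ minimizes $S$ over $N$, so by the Lagrange step $\ff$ solves \eqref{standing} and attains $c$, i.e. $\ff\in\ngg$. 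This is the stated characterization.

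I expect no serious obstacle in this argument: the only genuinely nontrivial analytic input --- the existence of a minimizer of $S$ on $N$, i.e. the attainment of the mountain pass level, which rests on the compact embedding of Lemma~\ref{embeding-lemma} together with a concentration/non-vanishing argument --- has already been supplied by the preceding theorem and is merely quoted here. What remains is essentially bookkeeping; the only points needing a (routine) check are the strict inequality $\scal{N'(u),u}<0$ on $N$, which forces the Lagrange multiplier to vanish, and the elementary inclusion of the solution set of \eqref{standing} into $N$.
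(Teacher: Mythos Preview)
Your argument is correct and is precisely the standard Nehari manifold / fibering-map argument that the paper implicitly invokes by referring to \cite{amins-2022}: the paper's own ``proof'' consists only of the sentence that it is similar to that reference with natural modifications, so you have essentially written out what the paper omits. The key steps --- uniqueness of the fibering maximum, the lower bound on $Q$ along $N$ via the embedding and coercivity for $\omega>\beta_\ast$, and the vanishing of the Lagrange multiplier from $\langle N'(u),u\rangle=-2Q(u)<0$ --- are all correctly handled, and your appeal to the preceding existence theorem for attainment of $\inf_N S$ is exactly the intended use of that result.
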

	\begin{proof}
		The proof is similar to one of Theorem 2.1 in \cite{amins-2022} with natural modifications. So we omit the details.
	\end{proof}
	\begin{corollary}\label{elimit}
		Let $\ff\in\ngg$ and $\omega>0$, then
		\[
		\lim_{\e\to0^+}	\e\|L_{\beta/2}\ff\|_\lt=0.
		\]
	\end{corollary}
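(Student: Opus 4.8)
The plan is to read off the conclusion from the one-sided comparison estimate for $m_\e$ that already appears inside the proof of Theorem~\ref{convergence-theorem}, together with the continuity $m_\e\to m_0$ as $\e\to 0^+$ and the scaling normalization $\|\ff\|_{L^4(\rn)}^2=m_\e$ enjoyed by the rescaled ground states $\ff\in\ngg_\e$. First I would note that, since $\beta_\ast=-\left(\e\beta^{-\beta}\right)^{1/(\beta+1)}(1+\beta)\to 0^-$ as $\e\to 0^+$, for every fixed $\omega>0$ the set $\ngg_\e$ is nonempty once $\e$ is small enough, so the limit in the statement is meaningful, and throughout I write $\ff=\ff_\e\in\ngg_\e$.

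Next, I would fix such a small $\e>0$. Writing $I_\e(u)=I_0(u)+\e\|L_{\beta/2}u\|_\lt^2$ and using $\ff$, which lies in $X_\beta\hookrightarrow H^1(\rn)$ and hence is admissible in the variational problem \eqref{zer0-m-min}, as a competitor for $m_0$, the same chain of inequalities as in the proof of Theorem~\ref{convergence-theorem}, specialized to $\e_1=0$ and $\e_2=\e$, gives
\[
m_0\leq\frac{I_0(\ff)}{\|\ff\|_{L^4(\rn)}^2}
=\frac{I_\e(\ff)-\e\|L_{\beta/2}\ff\|_\lt^2}{\|\ff\|_{L^4(\rn)}^2}
=m_\e-\e\,\frac{\|L_{\beta/2}\ff\|_\lt^2}{\|\ff\|_{L^4(\rn)}^2}.
\]
Rearranging and inserting the identity $\|\ff\|_{L^4(\rn)}^2=m_\e$ yields the key bound
\[
\e\,\|L_{\beta/2}\ff\|_\lt^2\leq m_\e\bigl(m_\e-m_0\bigr).
\]

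It then remains only to pass to the limit. By Theorem~\ref{convergence-theorem} the map $\e\mapsto m_\e$ is increasing, continuous, and satisfies $m_\e\to m_0$ as $\e\to 0^+$; in particular $m_\e$ stays bounded near $\e=0$ and $m_\e-m_0\to 0$. Hence $\e\|L_{\beta/2}\ff\|_\lt^2\to 0$, and since $\e$ is bounded near $0$ I conclude
\[
\bigl(\e\,\|L_{\beta/2}\ff\|_\lt\bigr)^2=\e\cdot\e\,\|L_{\beta/2}\ff\|_\lt^2\leq\e\,m_\e\bigl(m_\e-m_0\bigr)\to 0,
\]
which is the assertion. I do not expect a substantial obstacle here; the only points that need care are the admissibility of $\ff$ as a test function for $m_0$, which is immediate from the embedding $X_\beta\hookrightarrow H^1(\rn)$ (see Lemma~\ref{density-space}), and the normalization $\|\ff\|_{L^4(\rn)}^2=m_\e$ of the rescaled ground states, both of which are already established in the preceding discussion.
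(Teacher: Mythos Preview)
Your argument is correct and slightly more economical than the paper's. The paper expresses $\e\|L_{\beta/2}\ff\|_\lt^2$ via the Nehari identity as $m_\e^2-\omega\|\ff\|_\lt^2-\|\nabla\ff\|_\lt^2$ (using Lemma~\ref{variational-min-1} together with $\|\ff\|_{L^4}^2=m_\e$), and then invokes the full $H^1$-convergence $\ff(\cdot+y_\e)\to\ff_0$ from Theorem~\ref{convergence-theorem} to pass to the limit, obtaining $m_0^2-I_0(\ff_0)=0$. Your route instead extracts the one-sided inequality $m_0\le m_\e-\e\|L_{\beta/2}\ff\|_\lt^2/m_\e$ (a special case of the monotonicity estimate inside the proof of Theorem~\ref{convergence-theorem}) and uses only the continuity $m_\e\to m_0$, not the stronger $H^1$-convergence of the ground states themselves. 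Both approaches rely on the normalization $\|\ff\|_{L^4}^2=m_\e$, so the inputs overlap; what you gain is that you never need to know that the profiles converge, only that the minimal values do. You are also more explicit than the paper in closing the gap between $\e\|L_{\beta/2}\ff\|_\lt^2\to 0$ and the stated conclusion $\e\|L_{\beta/2}\ff\|_\lt\to 0$.
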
 
	\begin{proof}
		First we note from Lemma \ref{variational-min-1} that 
		\[
		\e\|L_{\beta/2}\ff\|_\lt^2=-\omega\|\ff\|_\lt^2
		-\|\nabla\ff\|_\lt^2+m^2.
		\]
		It transpires from Theorem \ref{convergence-theorem} that
		\[
		\lim_{\e\to0^+}	\e\|L_{\beta/2}\ff\|_\lt^2=
		-\omega\|\ff_0\|_\lt^2-\|\nabla\ff_0\|_\lt^2+m_0^2
		=-I_0(\ff_0)+m^2=0.
		\]
	\end{proof}
	
	We define the spaces
	\[
	X^s=\sett{u\in L^\infty(\rn),\;\|u\|_{X^s}=\|\langle\cdot\rangle^s u\|_{L^\infty(\rn)}<\infty},\quad
	\hat{X}^s=\sett{u\in \mathcal{S}'(\rn),\;\|u\|_{\hat{X}^s}=\| \hat{u}\|_{X^s}<\infty},
	\]
	where $\langle \cdot\rangle=1+|\cdot|$.

	\begin{proof}[Proof of Theorem \ref{decay-theor}]
		First, we note that if $n<4$, then $\ff\in L^\infty(\rn)$ due to the Sobolev embedding. For the case $n\geq4$, we rewrite \eqref{standing} by
		$\ff=K_\beta\ast\ff^{3}$ where
		\begin{equation}\label{integral-form}
			\widehat{K}_\beta(\xi)=\frac{|\xi|^{2\beta}}{\e+\omega|\xi|^{2\beta}+|\xi|^{2(1+\beta)}}
		\end{equation}
		It is straightforward to check that  $\hat{K}_\beta\in L^{q}(\rn)$ with $q>\frac{n}{2}$, hence $K_\beta\in L^q(\rn)$ with $\frac{1}{q}>1-\frac2n$. In particular, 
		$\hat{K}_\beta(0)=\int_\rn K(x)\dx=0$. Moreover, 
		$|\xi|^{-2\beta}\hat{K}_\beta\in L^{q}(\rn)$ with $q>\frac{n}{2(1+\beta)}$, so $K_\beta\in L^q(\rn)$ with $\frac{1}{q}>1-\frac{2(1+\beta)}{n}$. 
		In addition, for any $\al>0$,
		$|\xi|^{\al}\hat{K}_\beta\in L^{q}(\rn)$ with $(2-\al)q>n$, so that $D^\al K_\beta\in L^q(\rn)$ with $\frac{1}{q}>1-\frac{2-\al}{n}$. 
		Therefore, it is easy from \eqref{integral-form} that $\ff\in L^\infty(\rn)$.
		To see the high regularity of $\ff$, one can check that $L_\beta K_\beta$ satisfies the  Mikhlin theorem. Then it follows from the fact $\ff\in L^\infty(\rn)$ that $L_\beta\ff\in H^{3+2\beta}(\rn)$. Now by using the inequalities  $D^\beta|u|\leq {\rm sgn}(u)D^\beta u$  and 
		$D^\beta\phi(f)\leq\phi'(f)D^\beta f$ (see \cite{corm}), we obtain that $\ff\in H^{3}(\rn)$.  
		Finally,   the classical bootstrapping argument shows that 	$\ff ,L_\beta\ff  \in  H^\infty(\rn)$.

		Next, we obtain the decay estimate of $\ff$. 
		We have by using the Plancherel theorem that $|x|^\al K_\beta\in L^q(\rn)$ for all $2+\al>\frac nq$. This gives the decay estimate of $K_\beta$ on $\rn\setminus B_R(0)$ for some $R>0$.

		By the regularity of solitary waves, for any $\delta>0$ we can find $R_\delta>0$ such that $|\ff^{3}|\leq\delta|\ff|$ for all $|x|\geq R_\delta$. We notice that $|x|^\al\ff\in L^q_{\rm loc}(\rn)$, so we show that $|x|^\al\ff\in L^q(\rr^n_R)$, where $\rr^n_R=\rr^n\setminus B_{R}(0)$. Following \cite{esfahan-phys-a}, we define $g_\epsilon(x)=A_\epsilon(x)\ff(x)$ with $A_\epsilon(x)=\frac{|x|^\al}{(1+\epsilon|x|)^s}$, where $\epsilon>0$ is arbitrary. Then,  we get from
		\[
		|\ff(x)|\lesssim\|(1+|x|)^sK_\beta\|_{L^{q'}(\rn)}\|G(x,\cdot)\|_{L^q(\rn)},\qquad G(x,y)=\frac{\ff^{3}(x)}{(1+|x-y|)^s} 
		\] 
		for some $s-\al>\frac{n}{q}$, and the H\"{o}lder inequality
		\[
		\|g_\epsilon\|_{L^q(\rr^n_R)}^q
		\lesssim
		\int_{\rr^n_R}A_\epsilon^q(x)\|G(x,\cdot)\|_{L^q(\rn)}^q\dx.
		\]
		By applying Lemma 2.3 in \cite{esfahan-phys-a}, we derive
		\[
		\|g_\epsilon\|_{L^q(\rr^n_R)}^q
		\lesssim
		\int_{\rr^n_R}A_\epsilon^q|\ff|^{q(p+1)}\dx
		+\int_{B_R(0)}|\ff(y)|^{q(p+1)}  
		\int_{\rr^n_R}\frac{A_\epsilon^q(x)}{(1+|x-y|)^{sq}}\dx\dd y
		\lesssim 
		\delta^q\|g_\epsilon\|_{L^q(\rr^n_R)}^q+1 .
		\]
		Now, if we choose $\delta$ small enough, then $\|g_\epsilon\|_{L^q(\rr^n_R)}^q$ is bounded uniformly in $\epsilon$. Hence, we let $\epsilon\to0$, and apply Fatou's lemma to deduce that
		$|x|^\al\ff\in L^q(\rr^n_R)$, where $\al$ is sufficiently small. Finally, we conclude from the inequality 
		\[
		\langle x
		\rangle^\al|\ff|\leq
		(\langle\cdot\rangle^\al K_\beta)\ast\ff^{3}
		+ K_\beta\ast(\langle\cdot\rangle^\frac{\al}{p+1}\ff)^{3}
		\]
		that $\langle x
		\rangle^\al\ff\in L^\infty(\rn)$. 
		
		To show the optimal decay estimate of the solitary wave, first, we consider the case $\beta\notin\N$.
		Since we have proved that $\ff\in X^\al$, then $\ff^{3}\in X^{3\al}$, then $\ff\in X^{3\al}\subset X^{n+4\beta}$, provided we   show   that $K_\beta$ is a continuous Fourier multiplier on $X^s$ for any $0\leq s\leq n+4\beta$.
		Let $\chi$ be a smooth cutoff function such that $\chi\equiv1$ in $B_1(0)$. By defining  
		$h_2(\xi)=\hat{K}_\beta-h_1(\xi)$, we prove that $h_1$ and $h_2$ are continuous multipliers on $X^s$, where 
		\[
		h_1(\xi)=\frac{\chi(\xi)|\xi|^{2\beta}}{\e+\omega|\xi|^{2\beta}+|\xi|^{2(1+\beta)}}.
		\]
		Following the ideas of \cite{capni}, we choose that $\tilde\chi\in C_0^\infty(\rn)$ such that $\tilde\chi\equiv1$ on ${\rm supp}(\chi)$, then it follows easily  from  Theorems 7.1.16 and  7.1.18 in \cite{hormander} that $\tilde\chi(\omega|\xi|^{2\beta}+|\xi|^{2(1+\beta)})\in\hat{X}^{n+2+2\beta}$. Now, if we rewrite $h_1$ by
		\[
		\frac1\e\chi(\xi)|\xi|^{2\beta} \sum_{k=0}(-1)^k
		\left(\frac{\omega|\xi|^{2\beta}+|\xi|^{2(1+\beta)}}{\e}\right)^k,
		\]
		then the above series is convergent in $\hat{X}^{n+2+2\beta}$ because 
		$\hat{X}^{n+2+2\beta}$ is a Banach algebra, and thereby $h_1\in \hat{X}^{n+4\beta+2}$ from the weighted Young inequality \cite{gfwz}. On the other hand, we note from the presence of $\chi$ in $h_2$ that $|h_2(\xi)|\gtrsim\langle\xi\rangle^{-2}$ and $D^\al h_2(\xi)\lesssim\langle\xi\rangle^{2-\al}$. So that $h_2$ belongs to the H\"{o}rmander class $S^{-2}$ (see \cite{stein}) and $h_2^\vee$ is a locally integralable function. Moreover, it follows from \cite{taylor} that $|((h_2)^\vee)(x)|\lesssim|x|^{-\ell}$ for any $|x|\geq1$ and $\ell>0$, and $|((h_2)^\vee)(x)|\lesssim|x|^{2-n}$ for any $|x|\leq1$. Indeed, $(h_2)^\vee$ is in the Gelfand–Shilov space $S_1^1$ (see \cite{cck,rt}). Therefore, $h_2$ is a continuous multiplier on $\hat{X}^{n+4\beta+2}$. This completes the proof in this case. 
		When $\beta\in\N$,   equation \eqref{standing} can be written by 
		\[
		(\e+\omega(-\Delta)^{\beta}+(-\Delta)^{\beta+1})\ff=(-\Delta)^{\beta}\ff^{3},
		\]
		and the result comes from the decay estimates of the elliptic-type equations \cite{fswx} combined with the heat kernels of $(-\Delta)^m$, $m\in\N$ (see \cite{clyz}).

	\end{proof}

	\begin{remark}
		If $n=1$, then by using the Residue theorem, we can observe for any $\beta>0$ that  
		\begin{equation}\label{residue}
			L_\beta   K_\beta(x)=
			\int_0^\infty
			\frac{\sin(\beta\pi)y^{2\beta}\ee^{-|x|y}}
			{y^{4\beta}(\omega-y^2)+\e^2+2\e y^{2\beta}\cos(\beta\pi)}\dd y.
		\end{equation} 
		So that $\lim_{|x|\to\infty}|x|^{2\beta+1} L_\beta   K_\beta\cong\frac{\sin(\beta\pi)\Gamma(2\beta+1)}{\e^2}$.
		
		If $\beta<1/2$ and $n=1$, then we can obtain for any $a>0$ from   Balakrishnan's representation of $(-\Delta)^\beta$ and Euler's reflection formula that
		\[
		\begin{split}
			(-\Delta)^\beta\ee^{-a|x|}&=\frac{\sin(\beta\pi)}{\pi}\int_0^{+\infty}\frac{-s^{\beta-1}\Delta\ee^{-a|x|} }{s-\da}\dd s\\
			& =
			\frac{\sin(\beta\pi)}{\pi}\int_0^{+\infty} -s^{\beta-1}\Delta\left(\frac{\ee^{-s|x|}}{\sqrt{s}}\ast\ee^{-a|x|}\right)  \dd s\\
			&=\frac{\sin(\beta\pi)}{\pi}\int_0^{+\infty}  \frac{as^{\beta-1}}{s+a^2}\Delta\left( \frac{\ee^{-a|x|}}{a}-\frac{\ee^{-\sqrt{s}|x|}}{\sqrt{s}}\right)  \dd s\\
			&=\frac{\sin(\beta\pi)}{\pi}a^{2\beta}\Gamma(\beta)\Gamma(1-\beta)\ee^{-a|x|}\\
			&\quad-\frac{\sin(\beta\pi)\Gamma(2\beta)}{\pi}a^{2\beta}
			\left[\ee^{\ii a|x|+\frac{(2\beta-1)\ii\pi}{2}}\Gamma(1-2\beta,\ii a|x|)+\ee^{-\ii a|x|-\frac{(2\beta-1)\ii\pi}{2}}\Gamma(1-2\beta,-\ii a|x|)\right]\\  
			&= a^{2\beta} \ee^{-a|x|}\\
			&\quad-\frac{\sin(\beta\pi)\Gamma(2\beta)}{\pi}a^{2\beta}
			\left[\ee^{\ii a|x|+\frac{(2\beta-1)\ii\pi}{2}}\Gamma(1-2\beta,\ii a|x|)+\ee^{-\ii a|x|-\frac{(2\beta-1)\ii\pi}{2}}\Gamma(1-2\beta,-\ii a|x|)\right]
			\\
			& \leq  a^{2\beta} \ee^{-a|x|}, 
		\end{split}
		\]
		where $\Gamma(\cdot,\cdot)$ is the incomplete gamma function. One can  similarly obtain
		that    $L_\beta \ee^{-a|x|}\leq a^{-2\beta} \ee^{-a|x|}$.
		Hence, by combining the above estimate and \eqref{residue}, we deduce  
		$  K_\beta(x)=O(|x|^{-4\beta-1})$ at infinity.

	\end{remark}
	
	\subsection{Uniform boundedness }
	
	Having the existence of the best constant of \eqref{embed-1} in our hands, we now can derive the uniform boundedness of solutions in the energy space $X_\beta$.
	\begin{lemma}[\cite{begout}]
		\label{begout}
		Let $J:=[0,T)\subset\rr$ be a non-degenerated interval. Let $q > 1$, $b>0$ and $a$ be real constants. Define  $\vartheta=(bq)^{-1/(q-1)}$ and
		$f(r)=a-r+br^q$ for $r\geq0$. Let $G(t)$ be a continuous nonnegative function
		on $J$.  If $G(0)<\vartheta$, $a< (1-1/q)\vartheta$ and $f\circ G\geq0$, then
		$G(t)<\vartheta$, for any $t\in J$.
	\end{lemma}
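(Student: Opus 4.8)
The plan is a standard continuity (bootstrap) argument, resting on one elementary computation about the polynomial $f$. First I would locate the minimum of $f(r)=a-r+br^{q}$ on $[0,\infty)$. Since $f'(r)=-1+bqr^{q-1}$ vanishes exactly at $r=(bq)^{-1/(q-1)}=\vartheta$, with $f'<0$ on $[0,\vartheta)$ and $f'>0$ on $(\vartheta,\infty)$, the point $\vartheta$ is the global minimizer of $f$ over $[0,\infty)$. Using $\vartheta^{q-1}=1/(bq)$ one gets $b\vartheta^{q}=\vartheta/q$, hence $f(\vartheta)=a-(1-1/q)\vartheta$, which is strictly negative precisely by the hypothesis $a<(1-1/q)\vartheta$. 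Thus $f(\vartheta)<0$; since $b>0$ and $q>1$ also force $f(r)\to+\infty$ as $r\to\infty$, the set $\{r\ge 0:f(r)\ge 0\}$ in fact splits as $[0,r_1]\cup[r_2,\infty)$ with $0\le r_1<\vartheta<r_2$ (the first piece possibly degenerate or empty) — but for what follows only $f(\vartheta)<0$ is needed.

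Next I would feed this into the continuity argument. Suppose, toward a contradiction, that $G(t_1)\ge\vartheta$ for some $t_1\in J$, and put $A=\{t\in J:\ G(t)\ge\vartheta\}$. By continuity of $G$ this set is relatively closed in $J$, and it is nonempty; since $G(0)<\vartheta$, its infimum $t_\ast:=\inf A$ is positive and $G(t)<\vartheta$ for all $t\in[0,t_\ast)$. Letting $t\to t_\ast^{-}$ and using continuity gives $G(t_\ast)\le\vartheta$, while $t_\ast\in A$ (closedness) gives $G(t_\ast)\ge\vartheta$; hence $G(t_\ast)=\vartheta$. But then $f(G(t_\ast))=f(\vartheta)<0$, contradicting the hypothesis $f\circ G\ge 0$ on $J$. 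Therefore no such $t_1$ exists, i.e.\ $G(t)<\vartheta$ for every $t\in J$. If one prefers to avoid infima, the intermediate value theorem applied to the continuous $G$ with $G(0)<\vartheta\le G(t_1)$ produces a point of $[0,t_1]$ where $G=\vartheta$, hence where $f\circ G<0$, yielding the same contradiction.

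I do not expect a genuine obstacle: the lemma is elementary, and the reference \cite{begout} already contains it. The one point meriting care is the claim that $\vartheta$ is the \emph{global} minimizer of $f$ on $[0,\infty)$ rather than merely a critical point — this is what singles out $f(\vartheta)<0$ as the operative inequality and explains why the normalization $\vartheta=(bq)^{-1/(q-1)}$ is the natural threshold. Once that is settled, the ``first crossing time'' $t_\ast$ is produced by continuity in the routine way, and the strictness of the hypothesis $G(0)<\vartheta$ is exactly what is used to guarantee $t_\ast>0$.
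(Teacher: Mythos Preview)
Your argument is correct and is the standard proof of this bootstrap lemma. Note, however, that the paper does not actually prove this statement: it is quoted directly from \cite{begout} without proof, so there is nothing to compare against beyond confirming that your reasoning matches the elementary argument one finds in that reference.
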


	Now, we are in a position to prove the global existence.

	\begin{proof}[Proof of Theorem \ref{global conditions}]
		In the defocusing case, we have from the energy conservation that
		\[
		2\E(u_0)\geq\|u(t)\|_{\dot{X}_\beta}^2
		\]
		for any $t$ in the existence time interval. In the focusing case, we have from   \eqref{gag-l} that
		\begin{equation}\label{energy-estimate}
			\begin{split}
				2 	\E(u(t))&=\|u(t)\|_{\dot{X}_\beta}^2
				-\frac{1}{2}\|u(t)\|_{L^{4}(\rn)}^{4}\\
				&\geq\|u(t)\|_{\dot{X}_\beta}^2-\frac{ \vr_0}{2}
				\|u\|_\lt^{4-n}
				\| u\|_{\dot{X}_\beta}^{n}.
    			\end{split}
		\end{equation} 
		If $n=1$, then we have from the Young inequality and the mass conservation that
		\[
		\|u(t)\|_{\dot{X}_\beta}^2\lesssim\E(u_0)+\F^2(u_0).
		\]
		In the case $n=2$, the sharp constant $\vr_0$ in \eqref{without-r} implies that $u(t)$ is uniformly bounded in $\dot{X}_\beta$ if
		\[
		\F(u_0)\leq\F(\psi).
		\]
		In the case $n=3$, 
		by   Lemma \ref{begout}, we can define
		$G(t)=\|u(t)\|_{\dot{X}_\beta}^2$ and $f(r)=a-r+br^{q}$, where $a=2\E(u_0)$, $q=3/2$, $b=\frac{\vr_0}{2}\F^{\frac{1}{2}}(u_0).$
		It is obtained from Theorem \ref{thmlwp} that $G$ is continuous. Moreover, we have from   
		\eqref{energy-estimate} that $f\circ G\geq0$. Hence, we have the $\dot{X}_\beta$-uniform bound for the solution if we  can show that $G(0)<\vartheta$ and $a< (1-1/q)\vartheta$, where
		$\vartheta=(bq)^{-1/(q-1)}$.
		Now,  it is easy to see   that  $G(0)<\vartheta$
		is equivalent to \eqref{cond1}. In addition,  it is known  from   \eqref{zero-beta-ground} that (see \cite{cazenave2003}) 
  \begin{equation}\label{poho}
  \|\nabla\psi\|_{L^2(\rr^3)}^2=\frac34\|\psi\|_{L^4(\rr^3)}^4=3\F(\psi),
  \end{equation}
  and
		$$
		\E_0(\psi)=\frac{1}{2}\F(\psi).
		$$
				Therefore, $a< (1-1/q)\vartheta$ is equivalent to \eqref{cond2}. Thus,  
  we get from Lemma
		\ref{begout} and \eqref{poho} that $G(t)<\vartheta$, and equivalently
		 \begin{equation}\label{cond3}
			\|u(t)\|_{\dot{X}_\beta}^2\F (u(t))
			<
			\|\nabla\psi\|_{L^2(\rr^3)}^2\F (\psi).
		\end{equation} 
		Hence, it is concluded from $\F(u(t))=\F(u_0)$  for all $t\in[0,T)$ 
		that $u(t)$ is uniformly bounded in $X_\beta$ for all $[0,T)$.
		
		In the case of the energy critical case $n=4$, we know from the embedding 
		\begin{equation}\label{gn-crit}	 
			\|u\|_{L^4}^{4}\leq C_W\|\nabla u\|_{L^2(\rr^4)}^4
		\end{equation}
		that
		\begin{equation}\label{gn-critical-cn}
			C_{W}=\|W\|_{L^4(\rr^4)}^4\|\nabla W\|_\lt^{-4}=\|\nabla W\|_{L^2(\rr^4)}^{-2^\ast}
		\end{equation} 
		and
		\[
		W(x)= \left(1+\frac{|x|^2}{8}\right) ^{-1}.
		\]
		Hence, we obtain a similar estimate by Lemma \ref{begout} and \eqref{gn-crit}.
		
		In case (v), we have     from \eqref{lemma-best-masszero} that
		\begin{equation} 
			\begin{split}
				2 	\E(u(t))&=\|u(t)\|_{\dot{X}_\beta}^2-\frac{1}{2}\|u(t)\|_{L^{4}(\rn)}^{4}\\
				&\geq\|u(t)\|_{\dot{X}_\beta}^2-\frac{ \vr_\ast}{2}
				\| u(t)\|_{\dot{X}_\beta}^{4}.
			\end{split}
		\end{equation} 
		
		Hence, we obtain  from Lemma \ref{begout} that with $f(r)=2\E(u_0)-r+\frac{ \vr_\ast}{2}r^{2}$ and $r=\|u(t)\|_{\dot{X}_\beta}^2$ that the solution $u(t)$ is bounded in $\dot{X}_\beta$ provided 
		\begin{equation} 
			\|u_0\|_{\dot{X}_\beta} <\vr_\ast^{-1}=C_{\beta,n} \|Q_\ast\|_{\dot{X}_\beta} 
		\end{equation}
		and
		\begin{equation} 
			\E(u_0)<\frac{1}{4}\vr_\ast^{-1}
			=\frac{1}{4}C_{\beta,n,2}^\frac2p\|Q_\ast\|_{\dot{X}_\beta}^2=C_{\beta,n,2 } \E(Q_\ast) 
		\end{equation}
		hold, where in the last inequality we used the fact $4\E(Q_\ast)=\|Q_\ast\|_{\dot{X}_\beta}^2$. This completes the proof.
	\end{proof}
	
	In the focusing case in (v) of Theorem \ref{global conditions}, we can obtain interpolated global conditions involving the mass of the initial data. In this case, equation \eqref{standing} does not possess any scaling, so we are not able to find the sharp constant related to \eqref{embed-1} in terms of the ground states of a suitably scaled equation of \eqref{standing}.
	\begin{theorem}[Interpolated global conditions]\label{Interpolated global conditions}
		Let $u_0\in\x$. Assume in the focusing case that
		\begin{equation}\label{inter-cnd-1}
			\|u_0\|_{\dot{X}_\beta}^{2-m}
			\F^{\frac{\theta+m}{2}}(u_0)<
			\left(\frac{4c_{\beta,m} }{\vr k}\right)^{ \frac{2}{k-2}}
		\end{equation}
		and 
		
		\begin{equation}\label{inter-cnd-2}
			\E^{ k-2 }(u_0)\F^{\frac{(k-2)\theta}{2k}}(u_0)
			<
			c_{\beta,m}^{ k }\left(\frac{k-2}{2k}\right)^{\frac{2(k-2)}{k}}
			\left( \frac{4}{\vr k} \right)^2
		\end{equation}
		where
		\begin{equation}\label{cont-1}
			c_{\beta,m}=(\beta+1)\beta^{-\frac{m\beta}{2(\beta+1)}},
		\end{equation}	
		\begin{equation}\label{cont-2}
			\theta=\frac{4(2-m-\kappa)-n }{ m-2+4\kappa  +n },
		\end{equation}
		\begin{equation}\label{cont-3}
			0\leq \frac m2\leq 1-\kappa-\frac{n }{4},
		\end{equation}
		$k=n+4\kappa $, and $\kappa$ and $\vr$  are the same as in Lemma \ref{embeding-lemma}.
		Then the solution of \eqref{rnls} with initial data $u_0$ is uniformly bounded in $X_\beta$.
	\end{theorem}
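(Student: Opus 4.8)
The plan is to run the same scheme as in case (v) of the proof of Theorem \ref{global conditions}, the point being that here the mass $\F$ is retained in the nonlinear estimate rather than discarded through the scale–invariant inequality of Lemma \ref{lemma-best-masszero}. One works in the focusing case $\varsigma=+1$ with $\varepsilon>0$, so that conservation of energy gives, on the maximal interval of existence,
\[
\|u(t)\|_{\dot{X}_\beta}^2 \;=\; 2\E(u_0)+\tfrac12\|u(t)\|_{L^4(\rn)}^4 ,
\]
together with $\F(u(t))=\F(u_0)$ there. Since $\|\cdot\|_{X_\beta}\sim\|\cdot\|_{\dot{X}_\beta}$ by \eqref{norm-equi}, it suffices to bound $\|u(t)\|_{\dot{X}_\beta}$ a priori in terms of $\E(u_0)$ and $\F(u_0)$ only, and then invoke the local theory of Theorem \ref{thmlwp}.

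The analytic heart is a Gagliardo--Nirenberg inequality of the schematic form
\[
\|u(t)\|_{L^4(\rn)}^4 \;\lesssim\; c_{\beta,m}^{-k/2}\,\vr\,\F(u_0)^{\nu}\,\|u(t)\|_{\dot{X}_\beta}^{\,2q},\qquad k=n+4\kappa ,
\]
with implicit constant depending only on $\beta,n$, an exponent $q>1$ and a nonnegative mass exponent $\nu$, both determined by $m,\kappa,n$ (homogeneity under $u\mapsto\lambda u$ forces $2\nu+2q=4$). To obtain it I would start from \eqref{embed-1} taken with $p=2$,
\[
\|u\|_{L^4(\rn)}^4\le\vr^4\,\|u\|_{\lt}^{(4-n)-4\kappa}\,\|\nabla u\|_{\lt}^{\,n+\frac{4\kappa\beta}{\beta+1}}\,\|L_{\beta/2}u\|_{\lt}^{\,\frac{4\kappa}{\beta+1}},
\]
split a factor $\|u\|_{\lt}^{m}$ off the mass term, and rewrite it with the \emph{sharp} inequality \eqref{gag-2} in the form $\|u\|_{\lt}\le\|L_{\beta/2}u\|_{\lt}^{1/(\beta+1)}\|\nabla u\|_{\lt}^{\beta/(\beta+1)}$ (which follows from \eqref{gag-2} on replacing $u$ by $L_{\beta/2}u$); this shifts powers onto the gradient and $L_{\beta/2}$ norms, which are then merged into $\|u\|_{\dot{X}_\beta}$ by the weighted arithmetic--geometric mean inequality. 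The constraint \eqref{cont-3} is exactly what keeps every exponent generated in this process nonnegative; the number $\theta$ of \eqref{cont-2} is dictated by making the resulting inequality scale consistently; and the constant $c_{\beta,m}$ of \eqref{cont-1} is the contribution of the sharp constant in \eqref{gag-2}. Carrying these constants and exponents so that the outcome reproduces \eqref{inter-cnd-1}--\eqref{inter-cnd-2} exactly is the principal --- and essentially the sole --- technical difficulty.

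Feeding this bound into the energy identity, raising the identity to the power $(2-m)/2$ and using subadditivity of $r\mapsto r^{(2-m)/2}$ together with $\F(u(t))=\F(u_0)$, one arrives at an inequality $f\circ G\ge0$ on the existence interval, where
\[
G(t)=\F(u_0)^{(\theta+m)/2}\,\|u(t)\|_{\dot{X}_\beta}^{\,2-m}
\]
is a fixed multiple of a power of $\|u(t)\|_{\dot{X}_\beta}$ (the mass being conserved), and $f(r)=a-r+br^{q}$ with $a,b$ and $q>1$ read off --- via $\vartheta=(bq)^{-1/(q-1)}$ --- from $\E(u_0)$, $\F(u_0)$, $\vr$, $c_{\beta,m}$ and $k$. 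The map $t\mapsto G(t)$ is continuous by Theorem \ref{thmlwp}, and substituting the explicit values of $a,b,q$ shows that \eqref{inter-cnd-1} is precisely the hypothesis $G(0)<\vartheta$ of Lemma \ref{begout}, while \eqref{inter-cnd-2} is precisely its hypothesis $a<(1-1/q)\vartheta$. Lemma \ref{begout} then gives $G(t)<\vartheta$ on the whole existence interval, so $\|u(t)\|_{\dot{X}_\beta}$ --- hence $\|u(t)\|_{X_\beta}$ --- remains bounded in terms of the data alone, and the solution is global and uniformly bounded in $X_\beta$.
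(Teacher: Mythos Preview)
Your overall architecture---conservation of $\E$ and $\F$, a Gagliardo--Nirenberg estimate, then Lemma~\ref{begout}---is the same as the paper's, and you correctly identify $G(t)=\F(u_0)^{(\theta+m)/2}\|u(t)\|_{\dot X_\beta}^{2-m}$ as the quantity to track. But two of your key manipulations differ from the paper's and, as written, will not reproduce the stated conditions \eqref{inter-cnd-1}--\eqref{inter-cnd-2}.

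First, the source of $c_{\beta,m}$. In the paper it does \emph{not} come from \eqref{gag-2} applied to the nonlinear term; it comes from the sharp lower bound
\[
\|u\|_{\dot X_\beta}^2\ \ge\ (\beta+1)\,\beta^{-\frac{\beta}{\beta+1}}\,\F(u)
\]
(which is \eqref{norm-equi}), applied to the \emph{quadratic} term of the energy: one writes $\|u\|_{\dot X_\beta}^{2}=\|u\|_{\dot X_\beta}^{2-m}\,\|u\|_{\dot X_\beta}^{m}$ and converts the factor $\|u\|_{\dot X_\beta}^{m}$ downward into $c_{\beta,m}\F^{m/2}$. Your route instead pushes $m$ powers of mass \emph{upward} into $\dot X_\beta$ via \eqref{gag-2} inside the $L^4$ bound; since \eqref{gag-2} has sharp constant $1$, no factor of the form $(\beta+1)\beta^{-m\beta/(2(\beta+1))}$ can appear this way, and after AM--GM the exponent on $\|u\|_{\dot X_\beta}$ in the nonlinear term becomes $k+m$ rather than $k$. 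That changes the value of $q$ in Lemma~\ref{begout} and hence the explicit thresholds.

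Second, the paper does not raise the energy identity to the power $(2-m)/2$ and invoke subadditivity. It simply multiplies the identity $2\E(u)=\|u\|_{\dot X_\beta}^2-\tfrac12\|u\|_{L^4}^4$ by $\F^{\theta/2}(u_0)$ and then uses the bound above on the first term and \eqref{embed-1} (with the gradient and $L_{\beta/2}$ factors merged directly into $\|u\|_{\dot X_\beta}^{k}$) on the second, arriving at
\[
2\E(u_0)\F^{\theta/2}(u_0)\ \ge\ c_{\beta,m}\,G(t)\ -\ \tfrac{\vr}{2}\,\F^{\frac{\theta+4-4\kappa-n}{2}}(u_0)\,\|u(t)\|_{\dot X_\beta}^{k},
\]
which is already of the form $f\circ G\ge 0$ with $f(r)=a-r+br^{q}$; the choice of $\theta$ in \eqref{cont-2} is exactly what makes the second term a pure power of $G$. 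Your subadditivity step is unnecessary and, combined with the shifted GN bound, would yield $q=(k+m)/2$ rather than the paper's exponent, so \eqref{inter-cnd-1}--\eqref{inter-cnd-2} would not match. Replacing those two steps by ``multiply by $\F^{\theta/2}$'' and ``use \eqref{norm-equi} on $\|u\|_{\dot X_\beta}^m$'' fixes the argument.
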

	\begin{proof}
		By using the definition of $\E$ in our mind, we can multiply $\E(u)$ by $\F^\frac{\theta}{2}(u_0)$, and then use \eqref{embed-1} and the fact
		\begin{equation}
			\|u\|_{\dot{X}_\beta}^2\geq(\beta+1)\beta^{-\frac{\beta}{\beta+1}}\F(u),
		\end{equation}
		to derive
		\begin{equation}\label{enrg-int-equ}
			\begin{split}
				2\E(u_0)\F^\frac{\theta}{2}(u_0)&=	2 \E(u(t))\F^\frac{\theta}{2}(u(t))\\
				&\geq  c_{\beta,m} 
				\|u(t)\|_{\dot{X}_\beta}^{2-m}
				\F^{\frac{m+\theta}{2}}(u(t))-
				\frac{ \vr}{2}	
				\F^\frac{\theta+4(1-\kappa)-n}{2}(u(t))\|u(t)\|_{\dot{X}_\beta}^{4\kappa +n}.
			\end{split}
		\end{equation}
		Hence, by applying Lemma \ref{begout} with
		$G(t)=\|u(t)\|_{\dot{X}_\beta}^{2-m}\F^{\frac{\theta+m}{2}}(u_0)$, $q=\frac{k}{2}$, $s=2c_{\beta,m}^{-1}\E(u_0)\F^\frac{\theta}{2}(u_0)$ and
		\[
		f(r)=s-r+\frac{ \vr}{2c_{\beta,m}}r^k,
		\]
		we see that
		\[
		\vartheta= \frac{k\vr}{4c_{\beta,m} } ,
		\] 
		and thereby  after a straightforward computation it is reveals that $G(0)<\vartheta$ and $s<\frac{k-2}{k}\vartheta$ are equivalent to \eqref{inter-cnd-2} and \eqref{inter-cnd-2}. 
	\end{proof}
	
	\begin{remark}\label{remark-cons}
		Note for    $m=2$ that $c_{\beta,2}\leq2$  and formally $\lim_{\beta\to+\infty}c_{\beta,2}=\lim_{\beta\to0}c_{\beta,2}=1$.
	\end{remark}

	\section*{Appendix} 
	\setcounter{equation}{0}
	\renewcommand{\theequation}{A.\arabic{equation}}
	\setcounter{theorem}{0}
	
	\renewcommand{\thetheorem}{A.\arabic{theorem}}

	In this appendix, we give the proof for the dispersive estimate in Lemma \ref{lm-dispest}. To do this we need Van der Corput Lemma (see e.g., \cite{stein}).

	\begin{lemma}[Van der Corput Lemma]\label{lm-corput}
		Assume 
		$g \in C^1(a, b)$, $\psi\in  C^2(a, b)$ and $|\psi''(r)|  \ge  A$ for all $r\in (a, b)$. Then 
		\begin{align}
			\label{corput} 
			\Bigabs{\int_a^b \ee^{\ii t  \psi(r)}   g(r) \d r}& \le C  (At)^{-1/2}  \left[ |g(b)| + \int_a^b |g'(r)| \d r \right] ,
		\end{align}
		for some constant $C>0$ that is independent of $a$, $b$ and $t$.
	\end{lemma}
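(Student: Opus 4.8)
The plan is to prove this by the classical method: first handle the case of a trivial amplitude $g\equiv 1$, uniformly over all subintervals, and then pass to general $g$ by an Abel-summation / integration-by-parts argument. Since $\psi\in C^2(a,b)$ and $|\psi''|\ge A>0$, the continuous function $\psi''$ never vanishes and hence keeps a constant sign on $(a,b)$; replacing the pair $(\psi,g)$ by $(-\psi,\overline g)$ (which conjugates the integral and leaves $|g(b)|$ and $\int_a^b|g'|$ unchanged) I may assume $\psi''\ge A$, so that $\psi'$ is strictly increasing and has at most one zero $c\in[a,b]$. I may also assume $t>0$.

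First I would prove $\bigl|\int_{a'}^{b'}\ee^{\ii t\psi(r)}\,dr\bigr|\le C(At)^{-1/2}$ for every subinterval $[a',b']\subseteq[a,b]$, with $C$ absolute. Fix $\delta>0$ and split $[a',b']$ according to whether $|\psi'(r)|<\delta$ or $|\psi'(r)|\ge\delta$. On the sublevel set $E_\delta=\{r:|\psi'(r)|<\delta\}$, the lower bound $|\psi'(r)|=\bigl|\int_{c}^{r}\psi''\bigr|\ge A|r-c|$ (with the obvious modification when $\psi'$ has no zero, in which case $E_\delta$ is an initial or final segment) shows that $E_\delta$ is an interval of length at most $2\delta/A$, so its contribution to the integral is $\le 2\delta/A$. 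On the complementary set — a union of at most two intervals on each of which $\psi'$ is monotone with $|\psi'|\ge\delta$ — I integrate by parts writing $\ee^{\ii t\psi}=\frac{1}{\ii t\psi'}\frac{d}{dr}\ee^{\ii t\psi}$: each boundary term is bounded by $(t\delta)^{-1}$, and the leftover integral is controlled by $\frac1t$ times the total variation of $1/\psi'$, which is $\le 2/\delta$ since $1/\psi'$ is monotone and bounded by $1/\delta$ there. Altogether the integral is $\lesssim (t\delta)^{-1}+\delta/A$, and the choice $\delta=(A/t)^{1/2}$ gives the bound $C(At)^{-1/2}$ uniformly.

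Finally, for general $g$, set $F(x)=\int_{a}^{x}\ee^{\ii t\psi(r)}\,dr$, so that $F(a)=0$, $F'=\ee^{\ii t\psi}$ and $\|F\|_{L^\infty(a,b)}\le C(At)^{-1/2}$ by the previous step. Integrating by parts,
\[
\int_a^b \ee^{\ii t\psi(r)}g(r)\,dr = F(b)g(b)-\int_a^b F(r)g'(r)\,dr,
\]
and estimating $F$ by its sup-norm yields $\bigl|\int_a^b \ee^{\ii t\psi}g\bigr|\le \|F\|_{L^\infty}\bigl(|g(b)|+\int_a^b|g'(r)|\,dr\bigr)\le C(At)^{-1/2}\bigl(|g(b)|+\int_a^b|g'(r)|\,dr\bigr)$, which is the desired estimate. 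The only real work is in the first step, and there the single delicate point is the bookkeeping — performing the integration by parts on the correct monotone pieces and verifying that the constant produced by the split-and-optimize argument genuinely depends on nothing but $A$ and $t$; the sublevel-set bound and the second step are entirely routine.
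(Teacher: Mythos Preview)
Your argument is correct and is precisely the classical proof (sublevel-set bound plus integration by parts on the monotone pieces, then Abel summation for general $g$) found in Stein's \emph{Harmonic Analysis}. The paper does not supply its own proof of this lemma but simply quotes it with a reference to \cite{stein}, so there is nothing to compare against beyond noting that your write-up matches the standard source.
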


	Lemma \ref{lm-corput} holds even if $\psi'(r)=0$ for some $r\in (a, b)$.
	However, if $|\psi'(r)|> 0$ for all $r\in (a, b)$, one can obtain the following lemma by using integration by parts. 
	\begin{lemma}[\cite{ddt}]\label{lm-corput1}
		Suppose that $g \in C^\infty_0(a,b)$ and $\psi\in C^\infty (a, b)$ with $|\psi'(r)|> 0$ for all $r\in (a, b)$. If 
		\begin{equation}\label{dervbd}
			\max_{a\le r\le b}\abso{\frac{\d^j}{\d r^j}g(r)} \leq A, \qquad 
			\max_{a\le r\le b}  \abso{\frac{\d^j}{\d r^j}\left( \frac 1{\psi'(r)} \right)} \leq B
		\end{equation}
		for all $j=0,\cdots,N\in\N$,
		then 
		\begin{align}
			\label{corput'} 
			\abso{\int_a^b \ee^{\ii t  \psi(r)}   g(r) \d r}& \lesssim A B^N  |t|^{-N} .
		\end{align}
	\end{lemma}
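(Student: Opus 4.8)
The plan is to prove Lemma \ref{lm-corput1} by the classical non\-stationary phase argument: repeated integration by parts against the first\-order differential operator that fixes the oscillatory exponential. Concretely, I would introduce
\[
L f := \frac{1}{\ii t\,\psi'(r)}\,\partial_r f ,
\]
which is well defined on $(a,b)$ precisely because $|\psi'(r)|>0$ there, and which satisfies $L\bigl(\ee^{\ii t\psi(r)}\bigr)=\ee^{\ii t\psi(r)}$; iterating gives $L^N\bigl(\ee^{\ii t\psi(r)}\bigr)=\ee^{\ii t\psi(r)}$ for every $N\in\N$.

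Since $g\in C_0^\infty(a,b)$, all boundary contributions vanish when we integrate by parts, so applying this $N$ times moves the operator onto $g$:
\[
\int_a^b \ee^{\ii t\psi(r)} g(r)\,\d r
=\int_a^b \bigl(L^N \ee^{\ii t\psi(r)}\bigr) g(r)\,\d r
=\int_a^b \ee^{\ii t\psi(r)}\,\bigl(L^{\mathsf t}\bigr)^N g(r)\,\d r ,
\]
where $L^{\mathsf t} h=-\partial_r\!\bigl(h/(\ii t\,\psi')\bigr)$ is the formal transpose of $L$. Next I would expand $\bigl(L^{\mathsf t}\bigr)^N g$ by the Leibniz rule. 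A short induction on $N$ shows that $\bigl(L^{\mathsf t}\bigr)^N g$ equals $(\ii t)^{-N}$ times a finite sum of at most $C_N$ terms (with $C_N$ depending only on $N$), each a product of exactly one factor $g^{(j)}$ with $0\le j\le N$ and exactly $N$ factors of the form $\bigl(1/\psi'\bigr)^{(k)}$ with $0\le k\le N$; indeed, each application of $L^{\mathsf t}$ introduces one new factor $1/\psi'$ together with one new derivative to be distributed among the existing factors. Invoking the hypotheses \eqref{dervbd}, every such product is bounded in absolute value by $A\,B^N$, whence $\bigl|\bigl(L^{\mathsf t}\bigr)^N g(r)\bigr|\le C_N\,A\,B^N\,|t|^{-N}$ for all $r\in(a,b)$.

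Finally I would integrate this pointwise bound over the bounded interval $(a,b)$, absorbing the length $b-a$ and the combinatorial constant $C_N$ into the implied constant, to obtain
\[
\Bigabs{\int_a^b \ee^{\ii t\psi(r)} g(r)\,\d r}\le (b-a)\,C_N\,A\,B^N\,|t|^{-N}\lesssim A\,B^N\,|t|^{-N},
\]
which is \eqref{corput'}.

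The only genuinely delicate point is the bookkeeping in the Leibniz expansion: one must verify that exactly $N$ factors of (derivatives of) $1/\psi'$ occur in each resulting term, so that the estimate scales like $B^N$ and not a higher power, and that no derivative order exceeds $N$ so that \eqref{dervbd} applies termwise. This is a routine induction on $N$, but it is where the precise form of the bound is earned; the rest is standard non\-stationary phase.
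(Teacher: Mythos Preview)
Your argument is correct and is precisely the repeated integration-by-parts (non-stationary phase) computation that the paper alludes to; indeed, the paper does not spell out a proof but simply remarks that the lemma ``can be obtained by using integration by parts'' and cites \cite{ddt}. Your bookkeeping---one factor $g^{(j)}$ and exactly $N$ factors of derivatives of $1/\psi'$ per term, all of order at most $N$---is the right structural fact, and the absorption of $b-a$ into the implicit constant is consistent with how the lemma is applied in the paper (on the fixed interval $(1/2,2)$).
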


	\subsection*{Proof of Lemma \ref{lm-dispest}}

	Without loss of generality, we may assume $t>0$. Now,
	we can write
	\[
	\left[  \ee^{\ii tm(D)} f_\lambda \right](x) 
	= (I(\cdot, t)\ast f)(x),
	\]
	where
	\begin{equation}\label{Idef}
		I_{\lambda} (x, t) 
		= \lambda^n \int_{\R^n} \ee^{\ii (\lambda  x \cdot  \xi+   t  m( \lambda |\xi|)}  \rho(|\xi|)) \dd\xi,
	\end{equation}
	By Young's inequality, it suffices to prove
	\begin{equation}\label{Iest}
		| I_{\lambda} (x, t) |\le C t^{-\frac n2}.
	\end{equation}

	We provide the proof only for $n\ge 2$ as the one-dimensional case ($n=1$) is easier to treat.
	Using polar coordinates, we can write
	\begin{equation}\label{I-eq}
		I_{\lambda} (x, t) =\lambda^n  \int_{1/2}^2  \ee^{\ii t m(\lambda r) }  (\lambda r|x|)^{-\frac{n-2}{2}}  J_{\frac{n-2}{2}}( \lambda r |x|)   r^{n-1}  \rho(r) \dd r,
	\end{equation}
	where $J_k(r)$ is the Bessel function:
	$$
	J_k(r)=\frac{ (r/2)^k}{(k+1/2) \sqrt{\pi}} \int_{-1}^1  \ee^{\ii r s} \left(1-s^2\right)^{k-1/2} \d s \quad \text{for} \ k>-1/2.
	$$
	
	The Bessel function $J_k(r)$ satisfies the following properties for $k>-1/2$ and $r>0$,
	\begin{align}
		\label{Jm1}
		J_k (r) &\le Cr^{k} ,
		\\
		\label{Jm2}
		J_k(r)& \le C r^{-1/2} ,
		\\
		\label{Jm3}
		\frac{\dd  }{\dd r} \left[ r^{-k} J_k(r)\right] &= -r^{-k} J_{k+1}(r)
	\end{align}
	
	Moreover, we can write
	\begin{equation}\label{J0est}
		r^{- \frac{n-2}2 }J_{ \frac{n-2}2}(s)= \ee^{\ii s} h(s)  +\ee^{-\ii s}\bar h(s)
	\end{equation}
	for some function $h$ satisfying the decay estimate 
	\begin{equation}\label{h-est}
		\abso{\frac{\d^j}{\d r^j} h(r)} \le C_j \angles{r}^{-\frac{n-1}2-j}  \quad \text{for all} \ j\ge 0. 
	\end{equation}

	We use the short-hand
	$$ 
	m_{\lambda}(r) = m(\lambda r),  \qquad \tilde J_a(r)= r^{-a} J_a(r), \qquad \tilde\rho(r)=r^{n-1} \rho(r).$$
	
	Hence,
	\begin{equation}\label{I-eqq}
		I_{\lambda} (x, t) = \lambda^n  \int_{1/2}^2  \ee^{\ii t  m_{\lambda}(r)}  \tilde J_{\frac{n-2}{2}}( \lambda r |x|)  \tilde  \rho(r) \dd r.
	\end{equation}

	\vspace{2mm}

	\subsubsection{\underline{Case 1: $  |x|\lesssim  \lambda^{-1}$}:}
	
	By \eqref{J0est} and \eqref{h-est} we have for all $ r\in (1/2, 2)$ the estimate
	\begin{equation}
		\label{J0derv-est}
		\left| \partial_r ^j \left[  \tilde J_{ \frac{n-2}2 }( \lambda r |x|)  \tilde\rho(r) \right]\right| \underset{j}  \lesssim 1  \qquad  ( j \ge 0).
	\end{equation}
	
	We have for $\lambda \gg 1$,
	\begin{equation}\label{mlamb-invest}
		\max_{ 1/2 \le r \le 1 }\abso{\frac{\d^j}{\d r^j} \left(  \frac 1{  m'_{ \lambda}(r)  } \right)} \underset{j}  \lesssim \lambda^{- 2}  \qquad   (j \ge 0).
	\end{equation}

	Applying Lemma \ref{lm-corput1} with \eqref{J0derv-est}-\eqref{mlamb-invest} and $N=  n/2$
	to \eqref{I-eqq}, we obtain
	\begin{equation}\label{Iest-2}
		\begin{split}
			| I_{\lambda} (x, t) |
			&\lesssim \lambda^n \cdot \lambda^{- n} t^{-   \frac n2}  \lesssim  t^{-   \frac n2} .
		\end{split}
	\end{equation}

	\subsubsection{\underline{Case 2: $  |x|\gg  \lambda^{-1}$.}}
	Using \eqref{J0est} in \eqref{I-eq} we write
	\begin{align*}
		I_{\lambda} (x, t) 
		&=\lambda^n \left\{\int_{1/2}^2  \ee^{\ii t \phi^+_{\lambda} (r)  }  h(\lambda r |x|)  \tilde \rho(r) \d r +  \int_{1/2}^2  \ee^{-\ii t \phi^-_{\lambda} (r)  } \bar  h(\lambda r |x|)  \tilde\rho(r) \d r \right\},
	\end{align*}
	where 
	$$
	\phi^\pm_{\lambda} (r)=    \lambda r|x|/t  \pm m(\lambda  r).
	$$
	Set $H_{\lambda}( |x|, r) =h(\lambda r |x|)  \tilde \rho(r)$. In view of \eqref{h-est} we have 
	\begin{equation}
		\label{Hest}
		\max_{1/2 \le r\le 2}\Bigabs {\partial_r ^j H_{\lambda}( |x|, r)  }    \lesssim   (\lambda |x|)^{-\frac{n-1}2}  \qquad  ( j \ge 0),
	\end{equation}
	where we also used the fact $\lambda |x|\gg 1$

	Now 
	we write 
	$$
	I_{\lambda} (x, t) 
	= I^+_{\lambda} (x, t) 
	+  I^-_{\lambda} (x, t) ,
	$$
	where
	\begin{align*}
		I^+_{\lambda} (x, t) 
		&= \lambda^n \int_{1/2}^2  \ee^{\ii t \phi^+_{\lambda} (r)  } H_{\lambda}( |x|, r)    \d r ,
		\\
		I^-_{\lambda} (x, t) &= \lambda^n
		\int_{1/2}^2  \ee^{-\ii t \phi^-_{\lambda} (r)  }\bar H_{\lambda}( |x|, r)    \d r .
	\end{align*}
	
	Observe that
	$$
	\partial_r \phi^\pm_{\lambda} (r)=  \lambda  \left[ |x|/t \pm  m'(\lambda r) \right],\qquad \partial_r^2\phi^\pm_{\lambda} (r)=     \pm  \lambda^2 m''(\lambda r),
	$$
	and hence
	\begin{equation}
		\label{phi'+:est}
		|\partial_r \phi^+_{\lambda} (r)|\gtrsim  \lambda^2,
		\qquad 
		|\partial^2_r \phi^\pm_{\lambda} (r)| \sim   \lambda^2
	\end{equation}
	for all $ r\in (1/2, 2)$, where we also used the fact that $\lambda \gg 1$.

	\subsubsection*{\underline{Estimate for  $I^+_{\lambda} (x, t)$ } }
	
	Analogous to estimate \eqref{mlamb-invest}, we notice that
	\begin{equation}
		\label{Est-phi+'-1}
		\max_{1/2 \le r \le 2}\Bigabs {\partial_r ^j \left( \left[ \partial_r \phi_\lambda^+ (r) \right]^{-1} \right)}  \underset{j} \lesssim \lambda^{- 2}\qquad   (j \ge 0).
	\end{equation} 
	Applying Lemma \ref{lm-corput1} with \eqref{Hest}, \eqref{Est-phi+'-1}  and $N=  n/2$
	to $I^+_{\lambda} (x, t) $ we obtain
	\begin{equation}\label{Iest-3}
		\begin{split}
			| I^+_{\lambda} (x, t) |
			&
			\lesssim  \lambda^n  \cdot  (\lambda |x|)^{-\frac{n-1}2}  \cdot \lambda^{- n}  t^{- \frac n2 }
			\lesssim   t^{- \frac n2} ,
		\end{split}
	\end{equation}
	where we also used the fact that  $\lambda |x|\gg 1$.

	\subsubsection*{\underline{Estimate for $I^-_{\lambda} (x, t)$}}
	We treat the non-stationary and stationary cases separately. In the non-stationary
	case, where 
	$$ |x | \ll   \lambda t \quad \text{or} \quad |x| \gg   \lambda   t, $$ we have 
	$$
	|\partial_r \phi^-_{\lambda} (r)|\gtrsim   \lambda ^{2},
	$$
	and hence $I^-_{\lambda} (x, t)$ can be estimated in exactly the same way as $I^+_{\lambda} (x, t)$ above, and satisfies the same bound.

	So it remains to treat the stationary case: 
	$$
	|x | \sim  \lambda  t.
	$$
	In this case,  
	we use Lemma \ref{lm-corput}, \eqref{phi'+:est} and   \eqref{Hest} to obtain 
	\begin{equation}\label{Iest-station}
		\begin{split}
			| I^-_{\lambda} (x, t) 
			&\lesssim \lambda^n \left( \lambda^2  t \right)^{-\frac12}\left[ \abso{H_\lambda^- (x, 2)}+ \int_{1/2}^2 \abso{ \partial_r  H_\lambda^- (x, r)} \dd r\right]
			\\
			&\lesssim   \lambda^{n-1 }  t^{-\frac12} \cdot (\lambda |x|)^{-\frac {n-1}2}
			\\
			& \lesssim     t^{-\frac n2},
		\end{split}
	\end{equation}
	where we also used the fact that $H_\lambda^- (x, 2) =0$.

	\subsection*{Conflict of interest} The authors declare that they have no conflict of interest. 
	
	\subsection*{Data Availability}
	There is no data in this paper.
 
	\section*{Acknowledgment}
	
	A. E. is supported by Nazarbayev University under the Faculty Development Competitive Research Grants Program for 2023-2025 (grant number 20122022FD4121).
	


\end{document}